%%%%%%%%%%%%%%%%%%%%%%%%%%%%%%%%%%%%%%%%%%%%%%%%%%%%%%%%%%%%%%%%%%%%%%%
%
%   File     Extending absolutely continuous functions
%   Authors                 Ian Doust and Alan Stoneham
%   Created                 7/2/2022
%   Version                 09/08/2023
%
%%%%%%%%%%%%%%%%%%%%%%%%%%%%%%%%%%%%%%%%%%%%%%%%%%%%%%%%%%%%%%%%%%%%%%%

%%%%%%%%%%%%%%%%%%%%%%%%%%%%%%%%%%%%%%%%%%%%%%%%%%%%%%%%%%%%%%%%%%%%%%%
%
% Header information
%
%%%%%%%%%%%%%%%%%%%%%%%%%%%%%%%%%%%%%%%%%%%%%%%%%%%%%%%%%%%%%%%%%%%%%%%

\documentclass[12pt,twoside,leqno]{article}

\usepackage{latexsym, amsfonts, amsmath, amssymb,amsthm,verbatim,amscd,tikz,enumitem}
\usepackage{authblk}
\usepackage{scalerel}
%\usepackage[top=3cm, bottom=3cm, left=3cm, right=3cm]{geometry}
%  The following is included to check cross-references etc. Probably
% comment this out for the final version.
%\usepackage{hyperref,backref}

%%%%%%%%%%%%%%%%%%%%%%%%%%%%%%%%%%%%%%%%%%%%%%%%%%%%%%%%%%
% Studia format
%%%%%%%%%%%%%%%%%%%%%%%%%%%%%%%%%%%%%%%%%%%%%%%%%%%%%%%%%%
\pagestyle{myheadings}
\markboth{I. Doust and A. Stoneham}{Tietze type extensions of $AC$ functions}

% Theorem styles

\newtheorem{theorem}{Theorem}[section]
\newtheorem{corollary}[theorem]{Corollary}
\newtheorem{lemma}[theorem]{Lemma}

\theoremstyle{definition}
\newtheorem{definition}[theorem]{Definition}
\newtheorem{example}[theorem]{Example}

\newtheorem{question}[theorem]{Question}

%%%%%%%%%%% For IMPAN journals:

\frenchspacing

\textwidth=13.5cm
\textheight=23cm
\parindent=16pt
\oddsidemargin=-0.5cm
\evensidemargin=-0.5cm
\topmargin=-0.5cm
\hoffset=2cm

%%%%%%%%%%%%%%%%%%%%%%%%%%%%%%%%%%%%%%%%%%%%%%
%  My Macros
%%%%%%%%%%%%%%%%%%%%%%%%%%%%%%%%%%%%%%%%%%%%%%

\newcommand{\closeup}{\parskip 0pt}

\newcommand{\AC}{{AC}}
\newcommand{\BV}{{BV}}

\newcommand{\CTPP}{\mathrm{CTPP}} %{\hbox{$CT\kern-0.2ex{P}\kern-0.2ex{P}$}}
\newcommand{\PIC}{\mathrm{PIC}}
\newcommand{\APIC}{\mathrm{APIC}}
\newcommand{\SPIC}{\mathrm{SPIC}}

\newcommand{\PWP}{PW^+}

  % half-planes
  % Nef polygons

\newcommand{\calC}{\mathcal{C}}
\newcommand{\calM}{\mathcal{M}}

\newcommand{\mR}{\mathbb{R}}
\newcommand{\mC}{\mathbb{C}}

\newcommand{\mT}{\mathbb{T}}

\newcommand{\vecx}{{\boldsymbol{x}}}
\newcommand{\vecy}{{\boldsymbol{y}}}
\newcommand{\vecu}{{\boldsymbol{u}}}

\newcommand{\vecw}{{\boldsymbol{w}}}
\newcommand{\vecz}{{\boldsymbol{z}}}

\def\ls[#1,#2]{\overline{#1\,#2}}

\newcommand{\hatf}{{\hat f}}
\newcommand{\norm}[1]{\left\lVert#1\right\rVert}

\newcommand{\ssnorm}[1]{\lVert #1 \rVert}

\newcommand{\normbv}[1]{\left\lVert#1\right\rVert_{\BV(\sigma)}}

\def\ipr<#1,#2>{\langle #1,#2\rangle}
\def\ls[#1,#2]{\overline{\vphantom{\vbox to 1.2 ex{}} #1\, #2}}

\newcommand{\cl}{\mathop{\mathrm{cl}}}

\newcommand{\st}{\,:\,}

\DeclareMathOperator*{\var}{var}
\DeclareMathOperator*{\cvar}{\rm cvar}

\DeclareMathOperator{\vf}{vf}

\newcommand{\intr}{{\rm int}}

\newcommand{\journalname}[1]{\textrm{#1}}

%\newcommand{\blue}{\color{blue}}
%\newcommand{\black}{\color{black}}

%%%%%%%%%%%%%%%%%%%%%%%%%%%%%%%%%%%%%%%%%%%%%%%%%%%%%%%%%%%%%%%%%%%%%%%
%
%   The document
%
%%%%%%%%%%%%%%%%%%%%%%%%%%%%%%%%%%%%%%%%%%%%%%%%%%%%%%%%%%%%%%%%%%%%%%%
\begin{document}

\baselineskip=17pt

\title{Tietze type extensions for absolutely continuous functions in the plane}

\author{Ian Doust\\
\and
Alan Stoneham\\
School of Mathematics and Statistics\\
University of New South Wales\\
UNSW Sydney 2052\\
Australia\\ \ \\
Email: I.Doust@unsw.edu.au  \\
Email: A.Stoneham@unsw.edu.au}

\date{}

\maketitle

\renewcommand{\thefootnote}{}

\footnote{2020 \emph{Mathematics Subject Classification}: Primary 26B30; Secondary 47B40, 54C20.}

\footnote{\emph{Key words and phrases}: Functions of bounded variation, Absolutely continuous functions, $AC(\sigma)$ operators.}

\renewcommand{\thefootnote}{\arabic{footnote}}
\setcounter{footnote}{0}

%\keywords{Functions of bounded variation, $AC(\sigma)$ operators, isomorphisms of function spaces}
%\subclass{Primary 46J10; Secondary 05C10,
%46J45, 47B40, 26B30}
%%%%%%%%%%%%%%%%%%%%%%%%%%%%%%%%%%%%%%%%%%%%%%%%%%%%%%%%%%%%%%%%%%%%%%%
%
%   The abstract
%
%%%%%%%%%%%%%%%%%%%%%%%%%%%%%%%%%%%%%%%%%%%%%%%%%%%%%%%%%%%%%%%%%%%%%%%

\begin{abstract}
It is an open problem whether one can always extend an absolutely continuous function (in the sense of Ashton and Doust) on a compact subset of the plane to a larger compact set. In this paper we show that this can be done for a large family of initial domains whose components consist of polygons and convex curves. An application is given to the spectral theory of $AC(\sigma)$ operators.
\end{abstract}

\vfill\eject
%%%%%%%%%%%%%%%%%%%%%%%%%%%%%%%%%%%%%%%%%%%%%%%%%%%%%%%%%%%%%%%%%%%%%%%
%
%   Introduction
%
%%%%%%%%%%%%%%%%%%%%%%%%%%%%%%%%%%%%%%%%%%%%%%%%%%%%%%%%%%%%%%%%%%%%%%%
\section{Introduction}

A Hilbert space operator $T$ is normal if and only if it admits a $C(\sigma(T))$ functional calculus. Indeed, if there is any nonempty compact set $\sigma \subseteq \mC$ and a $C^*$-algebra isomorphism $\Phi: C(\sigma) \to B(H)$ such that $\Psi$ maps the identity function on $\sigma$ to $T$, then $T$ is normal. In this case $\sigma(T) \subseteq \sigma$ and the 
kernel of $\Phi$ is just the set of continuous functions which vanish on $\sigma(T)$.

In \cite{AD3} Doust and Ashton introduced a class of Banach space operators called $\AC(\sigma)$ operators which generalize the classes of well-bounded operators and trigonometrically well-bounded operators. In the setting of Hilbert spaces, all self-adjoint operators are well-bounded and all unitary operators are trigonometrically well-bounded, and in this context $AC(\sigma)$ operators may be seen as an analogue of normal operators (see, for example, \cite{Dow} and \cite{BG1,BG2,BG3} for more details of these classes of operators). An operator is an $AC(\sigma)$ operator if it admits a functional calculus for an algebra $AC(\sigma)$ of `absolutely continuous' functions on $\sigma$ which was introduced in \cite{AD1}. (Details of the spaces $AC(\sigma)$ for compact sets $\sigma \subseteq \mC$ will be given in Section~\ref{AC-properties}.) 

If $T$ is an $AC(\sigma)$ operator, then certainly $\sigma(T) \subseteq \sigma$, but at present it remains an open problem as to whether $T$ must in fact be an $AC(\sigma(T))$ operator. 
Such questions are closely related to classical extension theorems. Suppose that $\sigma_0 \subseteq \sigma$ are two compact subsets of the plane. The Tietze Extension Theorem ensures that if $f \in C(\sigma_0)$ then there exists $\hatf \in C(\sigma)$ such that $f = \hatf|\sigma_0$ and such that $\norm{f}_\infty = \ssnorm{\hatf}_\infty$. The main focus of this paper concerns the open question as to whether a corresponding extension result holds for absolutely continuous functions.
%It is an open question (which is the main focus of this paper) as to whether a corresponding extension result holds for absolutely continuous functions. 
In both settings, the restriction map $g \mapsto g|\sigma_0$ is a norm 1 algebra homomorphism. The extension question is asking whether this map is onto. 

The above fact about the restriction map means that if one can extend a function on $\sigma_0$ to any suitably large square or disk, then one can extend it to any compact superset. This means that the primary issue in such questions is the nature of the original domain $\sigma_0$, rather than the nature of $\sigma$. 

For certain classes of sets, an extension of an absolutely continuous function is always possible. This is easily seen to be the case if, for example, $\sigma_0$ is a compact subset of the real line.  On the other hand, as the following example shows, not every `natural' extension of an absolutely continuous function is absolutely continuous. 

\begin{example}\label{Cantor-ex}
Let $C: [0,1] \to [0,1]$ denote the Cantor function. Let $\sigma_0 = \{(x,C(x)) \st x \in [0,1]\}$, let $\sigma = [0,1] \times [0,1]$ and let $f: \sigma_0 \to \mC$ be defined by $f(x,y) = y$, $(x,y) \in \sigma_0$. Then (by the definitions in the next section) $f \in AC(\sigma_0)$. One might consider both the  functions $\hatf_1(x,y) = y$ and $\hatf_2(x,y) = C(x)$ as natural continuous extensions of $f$ to $\sigma$, but only the first of these is absolutely continuous.
\end{example}

A few particular extension results for absolutely continuous functions have appeared in the literature. For example, a formula which extends an absolutely continuous function on the boundary of a square to its interior was given in \cite[Theorem~3.5]{AD3}. 

%The aim of this paper is to establish a number of positive results in this direction, and also to identify some of the cases where difficulties appear to lie. In particular, it is of interest to know whether an extension map $f \mapsto \hatf$ can be chosen to be linear, and whether the norm of $\hatf$ can be controlled by the norm of $f$.

\begin{figure}
\begin{center}
\begin{tikzpicture}[scale=0.8]

%\draw[fill,blue!10] (0,0) -- (10,0) -- (10,10) -- (0,10) -- (0,0);
\draw[fill,black] (1,5) -- (5,5) -- (5,7) -- (3,7) -- (3,9) -- (1,9) -- (1,5);
\draw[fill,white] (1.6,7.4) -- (2.4,7.4) -- (2.4,8.2) -- (1.6,8.2) -- (1.6,7.4);
\draw[fill,white] (1.6,5.5) -- (4.4,5.5) -- (4.4,6.5) -- (1.6,6.5) -- (1.6,5.5);
\draw[fill,black] (2.5,5.8) -- (3.5,5.8) -- (3.5,6.2) -- (2.5,6.2) -- (2.5,5.8);

\draw[fill,black] (12,6) -- (13,6) -- (13,7) -- (12,7) -- (12,6);

\draw[black,ultra thick] (13,6.5) circle (2cm);
\draw[black,ultra thick] (13.5,6.5) circle (0.7cm);
\draw[black,ultra thick] (9,8) arc (90:180:2cm);
\draw[black,ultra thick] (7,5) -- (7,8);
\draw[black,fill]  (10., 7.7321) circle (0.05cm);
\draw[black,fill]  (10.6180,  7.1756) circle (0.05cm);
\draw[black,fill]  (10.8019, 6.8678 ) circle (0.05cm);
\draw[black,fill]    (10.8794, 6.6840) circle (0.05cm);
\draw[black,fill]    (10.9190, 6.5635 ) circle (0.05cm);
\draw[black,fill]    (10.9419,  6.4786) circle (0.05cm);
\draw[black,fill]    (10.9563,  6.4158) circle (0.05cm);
\draw[black,fill]    (10.9659,  6.3675) circle (0.05cm);
\draw[black,fill]    (10.9727, 6.3292) circle (0.05cm);
\draw[black,fill]    (10.9777,  6.2981) circle (0.05cm);

\draw[black,ultra thick] (1.8,5.7) -- (2.2,6.1);
\draw[black,ultra thick] (1.8,6.1) -- (2.2,5.7);
\draw[black,ultra thick] (4.4,6) circle (0.5cm);
\draw[black,ultra thick] (3.9,6) -- (7,6);

\end{tikzpicture}
\caption{An example of a plane set $\sigma_0$ which will satisfy the hypotheses of the main extension result, Theorem~\ref{main-theorem}.}\label{Example1}
\end{center}
\end{figure}
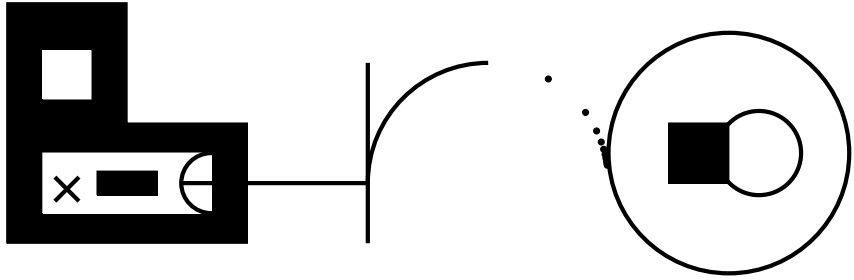

The main result of this paper, Theorem~\ref{main-theorem}, gives sufficient conditions on $\sigma_0$ which ensure that absolutely continuous extensions to supersets always exist. 
The main theorem covers sets such as the one in Figure~\ref{Example1} which are made up of polygonal regions with polygonal holes and convex curves satisfying certain hypotheses. Since the sets may be made up of pieces of a somewhat different nature, the proof essentially progresses by dealing with different parts of the set separately and then patching the pieces together. This will involve proving a number of `joining theorems'. 

These theorems, which are of independent interest, examine when one can say that a function which is absolutely continuous on each of two sets is then absolutely continuous on their union. We give some examples to show that even when the two sets are quite simple, such a conclusion may not hold.

In the simplest situation, where $\sigma_0 \subseteq \sigma \subseteq \mR$, one can always extend an absolutely continuous function on $\sigma_0$ to one on $\sigma$ without an increase in the variation norm. For the plane sets considered in this paper, it is not known whether one can always find an extension with the same norm. Certainly our methods do not produce such an extension. On the other hand we are able to at least show that the norm of the extension is controlled by some constant (depending on $\sigma_0$) times the norm of the function on $\sigma_0$. In order to keep the proof as simple as possible we have generally made no attempt to optimize the bounds, although in a few places we shall note that where our methods require that the constants that appear are greater than one.

There are a number of relatively simple sets for which it is not known whether extensions always exist. These include closed disks and the graph of the Cantor function considered above. 

In the next section we shall recall the main definitions and properties of absolutely continuous functions. The following section will introduce the classes of compact sets which appear in the main result, Theorem~\ref{main-theorem}. 

Although the great majority of the paper is purely concerned with the algebras of absolutely continuous functions, we shall return in the final section to revisit the original motivating question concerning $AC(\sigma)$ operators referred to above. Due to this, we shall work throughout with complex-valued functions. However, all the results remain true for real-valued functions. 

%%%%%%%%%%%%%%%%%%%%%%%%%%%%%%%%%%%%%%%%%%%%%
%%%%%%%%%%%%%%%%%%%%%%%%%%%%%%%%%%%%%%%%%%%%%

\section{Definitions and properties of $AC(\sigma)$}\label{AC-properties}

Throughout this paper, $\sigma_0$ and $\sigma$ will denote nonempty compact subsets of the plane. We shall consider the plane to be either $\mR^2$ or $\mC$ as is notationally convenient, and identify the real line with the $x$-axis in the plane.

For the moment then, fix $\sigma \subseteq \mR^2$. We shall now briefly recall the definition of the Banach algebra of functions of bounded variation on $\sigma$ and of its subalgebra, the set of absolutely continuous functions on $\sigma$.  Further details and proofs may be found in \cite{DLS1}.

Suppose that $f: \sigma \to \mC$. Given an ordered finite list of (not necessarily distinct) elements $S= [\vecx_0,\dots,\vecx_n]$ in $\sigma$, we set
  \[ \cvar(f,S) = \sum_{k=1}^n |f(\vecx_k) - f(\vecx_{k-1})|. \]
The variation factor of the list $S$, denoted $\vf(S)$, is a positive integer which roughly measures the maximum number of times that the piecewise linear path joining the points in $S$ in order crosses any line. More formally, let $\gamma_S: [0,1] \to \mC$ be a parameterization of the piecewise linear curve joining the elements in $S$ in order. Given a line $\ell$ in the plane, let $L(S,\ell) = \gamma_S^{-1}(\ell) \subseteq [0,1]$ and let $\vf(S,\ell)$ denote the number of connected components of $L(S,\ell)$. The \textbf{variation factor of $S$} is then $\vf(S) = \sup_\ell \vf(S,\ell)$. The main fact that we shall need in the later sections is that if $P$ is an simple $m$-gon and $S = [\vecx_0,\dots,\vecx_n]$ has $k$ line segments $\ls[\vecx_{j-1},\vecx_j]$ with one endpoint inside $P$ and one endpoint outside $P$, then $\vf(S) \ge \lceil k/m \rceil$. We refer the reader to \cite[Section 2.1]{DLS1} and the appendix to \cite{AS3} for further details of the properties of the variation factor.

The \textbf{variation of $f$ over $\sigma$} is found by taking a supremum over all such finite lists of points:
  \[ \var(f,\sigma) = \sup_{S} \frac{\cvar(f,S)}{\vf(S)}. \]
The  space of functions of bounded variation on $\sigma$, denoted $BV(\sigma)$, consists of all functions $f: \sigma \to \mC$ for which
  \[ \norm{f}_{BV(\sigma)}  = \sup_{\vecx \in \sigma} |f(\vecx)| + \var(f,\sigma) \]
is finite. 

Calculating $\var(f,\sigma)$ precisely is often challenging, so we shall mainly be using the properties of variation in order to obtain good bounds. We recall here the most important of these properties. Further details can be found in \cite{DLS1}.

\begin{theorem}\label{bv-properties}
Suppose that $\sigma$ is a nonempty compact subset of $\mR^2$. Then
\begin{enumerate}[label=(\roman*)]
\item $BV(\sigma)$ is a Banach algebra under the norm $\norm{\cdot}_{BV(\sigma)}$.
\item $BV(\sigma)$ contains the (restrictions of) complex polynomials in two real variables. 
\item If $\sigma = [a,b] \subseteq \mR$ then $BV(\sigma)$ is the usual classical space of functions of bounded variation, and $\var(f,[a,b])$ is just the usual measure of the variation of a function on the interval $[a,b]$.
\item If $\phi$ is an invertible affine transformation of the plane and $\sigma' = \phi(\sigma)$, then $\Phi(f) = f \circ \phi^{-1}$ is an isometric isomorphism from $BV(\sigma)$ to $BV(\sigma')$.
\end{enumerate}
\end{theorem}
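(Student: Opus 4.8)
The plan is to treat the four assertions largely independently, isolating a single geometric tool---a Banach indicatrix estimate relating $\cvar$ to integrals of line-crossing counts---which will do the work for both (ii) and (iii), while (i) and (iv) are essentially formal.

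For (i) I would first verify that $\norm{\cdot}_{BV(\sigma)}$ is genuinely a norm. Homogeneity and the triangle inequality for $\var(\cdot,\sigma)$ follow at once from $\cvar(\lambda f,S)=\abs{\lambda}\,\cvar(f,S)$ and $\cvar(f+g,S)\le\cvar(f,S)+\cvar(g,S)$, by dividing through by $\vf(S)$ and taking the supremum over lists $S$; adding the uniform norm repairs the fact that $\var(\cdot,\sigma)$ vanishes on, and only on, constant functions, so that $\norm{\cdot}_{BV(\sigma)}$ separates points. Submultiplicativity rests on the telescoping identity $f(\vecx_k)g(\vecx_k)-f(\vecx_{k-1})g(\vecx_{k-1})=f(\vecx_k)\bigl(g(\vecx_k)-g(\vecx_{k-1})\bigr)+g(\vecx_{k-1})\bigl(f(\vecx_k)-f(\vecx_{k-1})\bigr)$, which yields $\var(fg,\sigma)\le\norminf{f}\,\var(g,\sigma)+\norminf{g}\,\var(f,\sigma)$ and hence $\norm{fg}_{BV(\sigma)}\le\norm{f}_{BV(\sigma)}\norm{g}_{BV(\sigma)}$. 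Completeness is a standard lower-semicontinuity argument: since $\norminf{\cdot}\le\norm{\cdot}_{BV(\sigma)}$, a $BV(\sigma)$-Cauchy sequence $(f_n)$ is uniformly Cauchy and converges uniformly to a bounded $f$; because each ratio $\cvar(\cdot,S)/\vf(S)$ depends continuously on the finitely many function values involved, for each fixed $S$ one has $\cvar(f_n-f,S)/\vf(S)=\lim_m\cvar(f_n-f_m,S)/\vf(S)\le\limsup_m\var(f_n-f_m,\sigma)$, and taking the supremum over $S$ bounds $\var(f_n-f,\sigma)$ by the same (eventually small) quantity.

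The one genuinely geometric ingredient is the estimate $\cvar(g,S)\le\vf(S)\cdot(\text{range of }g\text{ on }\sigma)$ for a real affine function $g$ on the plane. To prove it I would parametrize $\gamma_S$, note that $g\circ\gamma_S$ is piecewise linear, and observe that its classical total variation equals $\cvar(g,S)$ and, by the Banach indicatrix theorem, equals $\int c(p)\,dp$, where $c(p)$ counts the linear pieces of $\gamma_S$ crossing the level set $\{g=p\}$. For generic $p$ these crossings are transverse and interior to segments, so $c(p)$ equals the number of connected components of $\gamma_S^{-1}(\{g=p\})$ and hence $c(p)\le\vf(S)$; integrating over the range gives the bound. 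Applying this to the two coordinate functions (affine, with range at most $\diam\sigma$) shows $x,y\in BV(\sigma)$, and since $BV(\sigma)$ is a complex algebra containing the constants, (ii) follows. For (iii), the inequality $\var(f,[a,b])\ge V_a^b(f)$ is immediate, since an increasing classical partition gives a list whose path is monotone, hence injective, so $\vf(S)=1$ and the ratio collapses to $\sum_k\abs{f(t_k)-f(t_{k-1})}$. For the reverse inequality I would bound $\abs{f(\vecx_k)-f(\vecx_{k-1})}$ by the increment of the variation function $V(t)=V_a^t(f)$, so that $\cvar(f,S)\le\cvar(V,S)$, and then run the indicatrix estimate with the monotone $V$ in place of $g$: monotonicity makes crossings of the level sets of $V$ correspond to crossings of points of $[a,b]$, giving $\cvar(V,S)\le\vf(S)\,V_a^b(f)$ and therefore $\var(f,[a,b])\le V_a^b(f)$. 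Finally, (iv) is the cleanest part: an invertible affine map $\phi$ permutes the lines of the plane bijectively and sends $\gamma_S$ to $\phi\circ\gamma_S$, so $\gamma_{\phi(S)}^{-1}(\ell)=\gamma_S^{-1}(\phi^{-1}\ell)$ and $\vf(\phi(S))=\vf(S)$; since also $\cvar(f\circ\phi^{-1},\phi(S))=\cvar(f,S)$ and the uniform norm is unchanged, $\Phi$ is an isometric algebra isomorphism.

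The routine parts are (i) and (iv); the crux is the single indicatrix estimate, and within it the careful identification of the crossing count $c(p)$ with the number of components of $\gamma_S^{-1}(\{g=p\})$ for the level line. The main hazards are degenerate configurations---vertices of $\gamma_S$ lying on the level line, segments contained in it, and (in the interval case) flat stretches and jumps of $V$---which each affect only a measure-zero set of levels $p$ and so leave the integral untouched, but which must be disposed of to make the pointwise bound $c(p)\le\vf(S)$ rigorous. This is precisely where the detailed combinatorial properties of $\vf$ recorded in \cite{DLS1} are needed.
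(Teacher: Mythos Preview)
The paper does not give its own proof here: Theorem~\ref{bv-properties} is stated as background with all proofs deferred to \cite{DLS1}, so there is nothing in-paper to compare against. Your sketch is sound and the indicatrix device is the right engine for (ii) and for the hard direction of (iii); parts (i) and (iv) are handled correctly.

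One slip is worth flagging. You list ``jumps of $V$'' among the degeneracies that affect only a measure-zero set of levels $p$, but a jump of $V$ at a point $s_0$ affects the whole interval of levels between $V(s_0^-)$ and $V(s_0)$, which has positive measure. If $s_0$ happens to coincide with one of the list points $t_j$, the line $x=s_0$ passes through a vertex of $\gamma_S$ and the crossing count need not match the component count of $\gamma_S^{-1}(\{x=s_0\})$; you cannot sidestep this by genericity in $p$, because $s_0$ is fixed while $p$ ranges over an interval. The repair is easy---test instead against the line $x=s_0-\varepsilon$ for small $\varepsilon>0$ with no $t_k$ in $(s_0-\varepsilon,s_0)$, so that level-$p$ crossings correspond to transverse crossings of this nearby line and the bound $N(p)\le\vf(S)$ follows---but it is not a measure-zero dismissal. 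Your closing remark that this is where the combinatorics of $\vf$ from \cite{DLS1} are needed is fair; just be aware that the jump case genuinely requires that extra step.
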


%This algebra always contains the polynomials in two real variables as a subalgebra and the space of absolutely continuous functions $AC(\sigma)$ is defined to be the closure of this subalgebra under the $BV$ norm.

\begin{definition}
The space $AC(\sigma)$ of \textbf{absolutely continuous functions} on $\sigma$ is the closure of the complex polynomials in two real variables in $BV(\sigma)$.
\end{definition}

%Again one has 
%
% In the case that $\sigma = [a,b] \subseteq \mR$, the spaces $BV(\sigma)$ and $AC(\sigma)$ are just the usual classical spaces from real analysis.
%One always has that $C^1(\sigma) \subseteq AC(\sigma) \subseteq C(\sigma)$ with the inclusions being proper in general \cite{DLS2}. 

We record some of the most important properties of $AC(\sigma)$ spaces.

\begin{theorem}\label{ac-properties}
Suppose that $\sigma$ is a nonempty compact subset of $\mR^2$.
\begin{enumerate}[label=(\roman*)]
\item If $\sigma = [a,b] \subseteq \mR$ then $AC(\sigma)$ is the usual classical space of absolutely continuous functions.
  \item\label{aff-inv} (Affine invariance) If $\phi$ is an invertible affine transformation of the plane and $\sigma' = \phi(\sigma)$, then $\Phi(f) = f \circ \phi^{-1}$ is an isometric isomorphism from $AC(\sigma)$ to $AC(\sigma')$.
\item\label{restr} If $\sigma_0$ is a nonempty compact subset of $\sigma$ and $f  \in AC(\sigma)$, then $f|\sigma_0 \in AC(\sigma_0)$ with $\norm{f|\sigma_0}_{BV(\sigma_0)} \le \norm{f}_{BV(\sigma)}$.
\item\label{r-ext} If $\sigma_x$ denotes the projection of $\sigma$ onto the $x$-axis, and $f\in AC(\sigma_x)$, then $\hat{f}:\sigma\to\mC$ defined by $\hat{f}(x,y) = f(x)$ is in $AC(\sigma)$ and satisfies $\|\hat{f}\|_{BV(\sigma)} \le \|f\|_{BV(\sigma_x)}$.
\end{enumerate}
\end{theorem}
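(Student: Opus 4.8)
The plan is to treat the four parts separately, but to exploit at the outset that parts~\ref{aff-inv}, \ref{restr} and \ref{r-ext} all fit a single template. In each case there is a natural linear map between $BV$ spaces — composition with $\phi^{-1}$, restriction to $\sigma_0$, or the lift $f\mapsto\hat f$ — and the argument splits in two: first establish the map is bounded on the $BV$ level with the stated constant (here always $\le 1$, or isometric); and second observe that it carries (restrictions of) two-variable polynomials to (restrictions of) two-variable polynomials. Since $AC(\sigma)$ is by definition the $BV(\sigma)$-closure of the polynomials, a bounded map of $BV$ spaces that sends the polynomial generators of one $AC$ space into the other must send the whole closure into the closure, with the same norm bound. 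Thus the real content of these three parts is purely the $BV$-level estimate; part~(i) is of a different character and needs a separate classical argument.

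For part~\ref{aff-inv} the $BV$-level statement is precisely Theorem~\ref{bv-properties}\,(iv), so $\Phi$ is already an isometric isomorphism of $BV(\sigma)$ onto $BV(\sigma')$. As $\phi$ and $\phi^{-1}$ are affine, $p\mapsto p\circ\phi^{-1}$ maps two-variable polynomials bijectively to two-variable polynomials; hence $\Phi$ restricts to a bijection of the polynomial subalgebras and, being isometric, to an isometric isomorphism of their closures $AC(\sigma)$ and $AC(\sigma')$. For part~\ref{restr} I would first read the $BV$ estimate straight off the definitions: any finite list $S$ of points of $\sigma_0$ is also a list in $\sigma$, with the same values of $\vf(S)$ and $\cvar(f,S)$, so $\var(f|\sigma_0,\sigma_0)\le\var(f,\sigma)$, while $\sup_{\sigma_0}|f|\le\sup_\sigma|f|$ is immediate; together these give $\norm{f|\sigma_0}_{BV(\sigma_0)}\le\norm{f}_{BV(\sigma)}$. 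Approximating $f\in AC(\sigma)$ by polynomials $p_n$ in $BV(\sigma)$, the restrictions $p_n|\sigma_0$ are polynomials converging to $f|\sigma_0$ in $BV(\sigma_0)$, whence $f|\sigma_0\in AC(\sigma_0)$.

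Part~\ref{r-ext} is where the genuine work lies, and I expect the variation-factor comparison to be the main obstacle. Writing $\hat f(x,y)=f(x)$, the supremum term is unchanged, so everything reduces to $\var(\hat f,\sigma)\le\var(f,\sigma_x)$. Given a list $S=[(x_0,y_0),\dots,(x_n,y_n)]$ in $\sigma$, its projection $S_x=[x_0,\dots,x_n]$ lies in $\sigma_x$ and satisfies $\cvar(\hat f,S)=\cvar(f,S_x)$. The crux is the inequality $\vf(S)\ge\vf(S_x)$. Choosing $\gamma_S$ and $\gamma_{S_x}$ with the same underlying partition of $[0,1]$, the $x$-coordinate of $\gamma_S(t)$ is exactly $\gamma_{S_x}(t)$, so for a vertical line $\ell_c=\{x=c\}$ one has $\gamma_S^{-1}(\ell_c)=\gamma_{S_x}^{-1}(\{c\})$ as subsets of $[0,1]$, and the component counts agree. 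Since for a list lying on the line the variation factor is attained on such vertical lines (every non-horizontal line meets the axis in a single point, the axis itself contributes one component), taking the supremum over $c$ yields $\vf(S)\ge\vf(S_x)\ge 1$. Combining, $\cvar(\hat f,S)/\vf(S)\le\cvar(f,S_x)/\vf(S_x)\le\var(f,\sigma_x)$, and the supremum over $S$ gives the estimate. The passage to $AC$ is then routine: a polynomial $p$ in $x$ lifts to the two-variable polynomial $\hat p(x,y)=p(x)$, so applying the estimate to an approximating sequence shows $\hat f\in AC(\sigma)$.

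Finally, part~(i) amounts to identifying the abstract closure with the classical space $\calA=AC[a,b]$. One inclusion uses that every polynomial is classically absolutely continuous and that $\calA$ is closed in $BV[a,b]$ (a standard fact), so $AC([a,b])$, being the $BV$-closure of the polynomials, lies in $\calA$. For the reverse inclusion I would take $f\in\calA$, so that $f'\in L^1[a,b]$, choose a polynomial $p$ with $\norm{f'-p}_{L^1}<\varepsilon$ (polynomials being dense in $L^1[a,b]$), and set $q(x)=f(a)+\int_a^x p$. Then $q$ is a polynomial, $(f-q)(a)=0$, and $\var(f-q,[a,b])=\norm{f'-p}_{L^1}$ together with $\sup|f-q|\le\norm{f'-p}_{L^1}$ give $\norm{f-q}_{BV}<2\varepsilon$; hence $f\in AC([a,b])$, and the two spaces coincide.
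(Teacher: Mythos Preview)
The paper does not actually prove Theorem~\ref{ac-properties}; it is stated as a record of known properties (``We record some of the most important properties of $AC(\sigma)$ spaces''), with the implicit reference being \cite{DLS1}. So there is no in-paper proof to compare against.

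That said, your argument is correct and essentially the standard one. The unifying template for (ii)--(iv) --- establish the $BV$-level bound, check that polynomials go to polynomials, and pass to the closure --- is exactly how these results are proved in the background references. Your handling of part~\ref{r-ext} correctly identifies the crux as the inequality $\vf(S)\ge\vf(S_x)$; the observation that for a list on the $x$-axis the variation factor is realised by vertical lines (any other line meets the axis in at most one point, so its crossing count coincides with that of the corresponding vertical line, while the axis itself contributes a single component) is the right way to see this, and the identity $\gamma_S^{-1}(\ell_c)=\gamma_{S_x}^{-1}(\ell_c)$ then transfers the count. Your treatment of part~(i) via the $L^1$-density of polynomials and integrating back is the standard route and is fine; the appeal to closedness of classical $AC$ in $BV$ is legitimate as a ``standard fact''.
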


When there is no risk of confusion, when $f: \sigma \to \mC$ and $\sigma_0 \subseteq \sigma$ we shall often write $\norm{f}_{BV(\sigma_0)}$ for the norm of the restriction of $f$ to the smaller set. 

An important fact about this sense of absolutely continuity is that it is a `local property'.  We shall say that a set $U$ is a compact neighbourhood of a point $\vecx \in \sigma$ if there exists a bounded open set $V \subseteq \mR^2$ containing $\vecx$ such that $U = \sigma \cap \overline{V}$. 

\begin{theorem}[Patching Lemma {\cite[Theorem~4.1]{DLS1}}]\label{patching-lemma}
Suppose that $\sigma$ is a nonempty compact subset of $\mR^2$ and that $f : \sigma \to \mC$. Then $f \in AC(\sigma)$ if and only if for every point $\vecx \in \sigma$ there exists a compact neighbourhood $U_\vecx$ of $\vecx$ such that $f| U_\vecx \in AC(U_\vecx)$.
\end{theorem}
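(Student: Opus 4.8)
The forward implication is immediate: if $f \in AC(\sigma)$ and $U_\vecx$ is any compact neighbourhood of $\vecx$, then $U_\vecx$ is a compact subset of $\sigma$, so the restriction property (Theorem~\ref{ac-properties}\ref{restr}) gives $f|U_\vecx \in AC(U_\vecx)$ with no further work. All the content is in the converse, so assume that every $\vecx \in \sigma$ has a compact neighbourhood $U_\vecx = \sigma \cap \overline{V_\vecx}$ with $f|U_\vecx \in AC(U_\vecx)$. The open sets $V_\vecx$ cover the compact set $\sigma$, so finitely many suffice. The plan is to fix such a finite subcover and relabel to obtain bounded open sets $V_1,\dots,V_n$ with $\sigma \subseteq \bigcup_i V_i$ and $f|U_i \in AC(U_i)$, where $U_i = \sigma \cap \overline{V_i}$.

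The strategy is then to localize $f$ by a partition of unity and patch the pieces back together additively, the smoothness of the partition being exactly what avoids the kind of jump that breaks absolute continuity in a naive union. I would choose $\psi_1,\dots,\psi_n \in C^\infty(\mR^2)$ with $\supp \psi_i \Subset V_i$, $0 \le \psi_i \le 1$, and $\sum_{i=1}^n \psi_i \equiv 1$ on a neighbourhood of $\sigma$; such a smooth partition of unity subordinate to $\{V_i\}$ exists by standard arguments. Using the fact (established in \cite{DLS1}) that $C^1$ functions on $\mR^2$ restrict to elements of $AC(\sigma)$, each $\psi_i|\sigma \in AC(\sigma)$. Since $\sum_i \psi_i = 1$ on $\sigma$ we may write $f = \sum_{i=1}^n (\psi_i|\sigma)\,f$, so it suffices to prove that each summand $g_i := (\psi_i|\sigma)\,f$ lies in $AC(\sigma)$. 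On the piece $U_i$ we have $g_i|U_i = (\psi_i|U_i)(f|U_i)$, a product of two elements of $AC(U_i)$; as $AC(U_i)$ is a closed subalgebra of the Banach algebra $BV(U_i)$, it is itself a Banach algebra, so $g_i|U_i \in AC(U_i)$. Moreover $g_i$ vanishes on the relatively open set $\sigma \setminus \supp \psi_i$, which contains $\sigma \setminus U_i$.

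The crux is therefore an extension-by-zero statement: a function which is absolutely continuous on $U_i$ and vanishes on a relative neighbourhood of $\sigma \setminus U_i$ is absolutely continuous on all of $\sigma$. To prove this I would approximate $g_i|U_i$ by polynomials $p_k \to g_i|U_i$ in $BV(U_i)$, fix a smooth cutoff $\chi \in C^\infty(\mR^2)$ with $\chi \equiv 1$ on a neighbourhood of $\supp \psi_i$ and $\chi \equiv 0$ on a neighbourhood of $\sigma \setminus U_i$, and consider the functions $\chi\,p_k$. Each $\chi p_k$ is a product of the $AC(\sigma)$ function $\chi|\sigma$ with the polynomial $p_k$, hence lies in $AC(\sigma)$, and since $\chi \equiv 0$ off $U_i$ each $\chi p_k$ is the zero-extension of its restriction to $U_i$. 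Because $\chi \equiv 1$ wherever $g_i \ne 0$, one checks that $\chi p_k - g_i$ is the zero-extension of $\chi(p_k - g_i|U_i)$, so the goal is to show $\chi p_k \to g_i$ in $BV(\sigma)$, which places $g_i$ in the closed subspace $AC(\sigma)$.

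The main obstacle is exactly this last convergence, since the $BV(\sigma)$ norm is not the sum of the norms over $U_i$ and over $\sigma \setminus U_i$: a single list $S$ of points may alternate repeatedly between the two regions, and the variation is a supremum over all such lists. The heart of the argument is a cutoff-product variation estimate of the form $\var(\chi h,\sigma) \le K\,\norm{h}_{BV(U_i)}$ for functions $h \in BV(U_i)$, where $K$ depends only on $\chi$ and the geometry of $\sigma$. This is where the variation factor earns its keep: because $\chi$ is fixed and vanishes outside $U_i$, any list for $\chi h$ on $\sigma$ can be broken into sublists lying in $U_i$, with the segments crossing into the zero region damped by $\chi$, and the normalization by $\vf(S)$ keeps this bookkeeping bounded no matter how often $S$ crosses between the regions. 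Applying the estimate to $h = p_k - g_i|U_i$ gives $\var(\chi p_k - g_i,\sigma) \le K\,\norm{p_k - g_i|U_i}_{BV(U_i)} \to 0$, which together with the uniform convergence yields $\chi p_k \to g_i$ in $BV(\sigma)$. Establishing this product estimate, with its careful accounting of crossing segments against the variation factor, is the technical core on which the whole lemma rests.
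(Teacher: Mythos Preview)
The paper does not actually prove this statement; it is quoted from \cite[Theorem~4.1]{DLS1}. Your partition-of-unity strategy is the standard route and almost certainly matches the argument in \cite{DLS1}: reduce to finitely many pieces, multiply by smooth cutoffs, and prove an extension-by-zero lemma. You have correctly isolated the crux as the estimate $\var(\chi h,\sigma)\le K\|h\|_{BV(U_i)}$ for the zero-extension of $\chi h$.

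There is, however, one genuine imprecision in your sketch of that estimate. You write that crossing segments are ``damped by $\chi$'' and that the normalization by $\vf(S)$ ``keeps this bookkeeping bounded no matter how often $S$ crosses between the regions''. Damping alone does not suffice: each crossing segment can still contribute up to $\|h\|_\infty$, and for a general bounded open set $V_i$ a list can oscillate between $\supp\chi$ and $\sigma\setminus U_i$ arbitrarily often while $\vf(S)$ stays bounded. What is missing is the choice of \emph{polygonal} neighbourhoods (equivalently, the choice of $\supp\chi$ inside some rectangle $R\Subset V_i$). The paper records exactly the fact you need in Section~\ref{AC-properties}: if $R$ is an $m$-gon and $S$ has $k$ segments with one endpoint inside $R$ and one outside, then $\vf(S)\ge\lceil k/m\rceil$, so the number of crossing segments is at most $m\,\vf(S)$. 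Combined with the sublist bound $\sum_{j\in J_1}|\chi h(\vecx_j)-\chi h(\vecx_{j-1})|\le\var(\chi h,U_i)\,\vf(S)$ (this is \cite[Lemma~4.2]{DLS1}, invoked in the same way in the proof of Theorem~\ref{join-line} here), one obtains $\var(\chi h,\sigma)\le\var(\chi h,U_i)+m\|h\|_\infty$, and your argument then goes through cleanly.
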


%%%
%%% New end of this section
%%%
We note that $AC(\sigma)$ always contains a relatively rich collection of functions. It is shown in \cite[Section 5]{DLS2} that the space of functions which admit a $C^1$ extension to an open neighbourhood of $\sigma$ is always dense in $AC(\sigma)$, as is the space $\CTPP(\sigma)$ of functions which are continuous and triangularly piecewise planar. On the other hand, while Lipschitz functions on a real interval are always absolutely continuous, this result does not extend to arbitrary compact subsets of the plane \cite[Example~3.13]{DLS1}.
 
\section{Classes of sets}

The aim of this paper is to identify the largest class of compact sets in the plane for which we can prove that any absolutely continuous function on such a set can always be extended to be absolutely continuous on any larger compact set.  This class contains all finite unions of polygonal regions, as well as most sets which are finite unions of convex curves. 

Given a set $A \subseteq \mR^2$, we shall denote its closure by $\cl(A)$, its interior by $\intr(A)$ and its boundary by $\partial A$.

\subsection{Polygonal regions}

By a \textbf{polygonal region} we mean a  connected subset of the plane with (simple) polygonal boundary. A set $W$ is a (polygonal) \textbf{window} in a polygonal region $P$ if it is the interior of a polygon $P'$ which lies in $P$. 
Let $\PWP$ denote the collection of all compact subsets of the plane which are finite unions of polygonal regions with finitely many windows removed. 

We allow here that a window may comprise the entire interior of a polygonal region. Note that for a set in $\PWP$, one may have windows inside polygons inside windows inside polygons, and so forth (see Figure~\ref{PWP-sets}). Every set in $\PWP$ is a union of a finite number of polygonal regions and line segments. While not all such sets are in $\PWP$ (for example a single line segment), our main theorem will cover these cases.

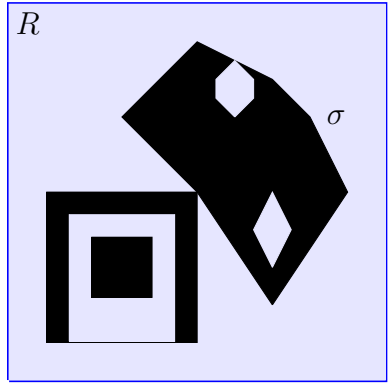
\begin{figure}
\begin{center}
\begin{tikzpicture}[scale=0.5]

\draw[blue,ultra thick] (0,0) -- (10,0) -- (10,10) -- (0,10) -- (0,0);
\draw[fill,blue!10] (0,0) -- (10,0) -- (10,10) -- (0,10) -- (0,0);
\draw[fill,black] (1,1) -- (5,1) -- (5,5) -- (1,5) --(1,1);
\draw[fill,blue!10] (1.6,1.03) -- (4.4,1.03) -- (4.4,4.4) -- (1.6,4.4) -- (1.6,1.03);
\draw[fill,black] (2.2,2.2) -- (3.8,2.2) -- (3.8,3.8) -- (2.2,3.8) -- (2.2,2.2);

\draw[fill,black] (7,2) -- (9,5) -- (8,7) -- (7,8) -- (5,9) -- (3,7) -- (5,5) -- (7,2);
\draw[fill,blue!10] (7,3) -- (7.5,4) -- (7,5) -- (6.5,4) -- (7,3);
\draw[fill,blue!10] (6,7) -- (6.5,7.5) -- (6.5,8) -- (6,8.5) -- (5.5,8) -- (5.5,7.5) -- (6,7);

\draw (0.5,9.5) node {$R$};
\draw (8.7,7) node {$\sigma$};

\end{tikzpicture}
\caption{A set $\sigma \in \PWP$ inside a rectangle $R$. The blue region $R \setminus (\sigma \cup \partial R)$ can always be triangulated. }\label{PWP-sets}
\end{center}
\end{figure}

Let $R$ be a rectangular region containing $\sigma_0 \in \PWP$ in its interior. Then ${\hat \sigma}_0 = \sigma_0 \cup \partial R$ is also in $\PWP$. The region $R \setminus {\hat \sigma}_0$ can always be triangulated. Given $f \in AC(\sigma_0)$ our aim will be to inductively extend $f$ to be absolutely continuous, first on the boundary of each of the triangles, and then on the whole triangular region. This will require that we first show that we can extend from a subset of a convex curve to the whole curve, and then (in Section~\ref{S:fill-poly}) from the boundary of a polygon to its interior.   We shall then show that the final function obtained is absolutely continuous on the whole of $R$. This will require us to show (in Section~\ref{S:joining-thms}) that we can join absolutely continuous functions defined on polygonal regions.

\subsection{Almost polygonally inscribed curves} 

The class of polygonally inscribed curves, or $\PIC$ sets, was introduced in \cite{AS2}. These are compact connected sets which can be written as a finite union of convex curves with certain constraints on how the curves can meet.
%These are compact connected sets which are a finite union of convex curves and satisfy an additional constraint on how the curves can meet. 
For our main theorem we need to relax this condition slightly to deal with the union of a convex curve and a polygonal region. We cannot however completely eliminate all restrictions of how the curves may meet since, as we shall note in Section~\ref{S:apic-join}, a function can be absolutely continuous on each of two convex curves, but not on their union.

\begin{definition}
A (convex) \textbf{polygonal mosaic} in the plane is a finite collection $\calM$ of convex polygonal regions such that 
\begin{enumerate}
  \item $\bigcup_{P \in \calM} P$ is connected;
  \item if $P$ and $Q$ are distinct elements of $\calM$, then $P \cap Q$ is either 
    \begin{itemize}\closeup
      \item empty
      \item a single point which is a vertex of both $P$ and $Q$, or
      \item a line segment.
    \end{itemize}
\end{enumerate}
\end{definition}

\begin{definition} Let $P$  be a convex polygonal region. A set $c \subseteq P$ is a \textbf{convex curve in $P$} if it is a differentiable convex curve joining two vertices of $P$ which only touches the boundary of $P$ at those points.
\end{definition}

\begin{definition}
A nonempty compact connected set $\sigma$ is a \textbf{polygonally inscribed curve} if there exists a polygonal mosaic $\calM = \{P_i\}_{i=1}^M$ whose union contains $\sigma$ and such that for each $i$, $c_i = \sigma \cap P_i$ is a convex curve in $P_i$. The curves $c_i$ will be called the \textbf{components} of $\sigma$. The collection of all polygonally inscribed curves in the plane will be denoted by $\PIC$.
\end{definition}

\begin{definition} 
We shall say that a connected compact set $\sigma \subseteq \mR^2$ is an \textbf{almost polygonally inscribed curve},
written $\sigma \in \APIC$, if there exists a polygonal mosaic $\calM = \{P_i\}_{i=1}^M$ whose union contains $\sigma$ and such that for each $i$, $C_i = \sigma \cap P_i$ is nonempty and is the the union of at most one convex curve in $P_i$ and a (possibly empty) subset of the sides of $P_i$. Given $\sigma \in \APIC$ with a polygonal mosaic $\calM$, the \textbf{components} of $\sigma$ determined by $\calM$ are the convex curves and line segments determined by $\calM$ whose union is $\sigma$.
\end{definition}

Once split into components, every $\PIC$ or $\APIC$ set can be considered as the drawing of a planar graph, with the components giving the edges of the graph, and the intersections of the components the vertices. The graph is not uniquely determined by the set, since such a set can be decomposed into components in many different ways. 

It is worth noting the differences between $\PIC$ sets and $\APIC$ sets. Two components of a $\PIC$ set can meet tangentially, but only with strictly opposite convexity. The new behaviour permitted for an $\APIC$ set is that a curve and a line may meet tangentially at a vertex. Such a vertex will be called a 
\textbf{line tangential vertex}.

\begin{definition} Let $\SPIC$ denote the class of all nonempty compact subsets of $\mR^2$ which are subsets of some almost polygonally inscribed curve.
\end{definition}

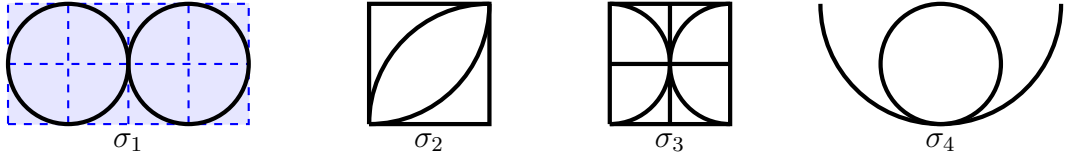
\begin{figure}
\begin{center}
\begin{tikzpicture}[scale=0.8]

\draw[fill,blue!10] (-1,0) -- (3,0) -- (3,2) -- (-1,2) -- (-1,0);
\draw[blue, thick,dashed] (-1,0) -- (3,0) -- (3,2) -- (-1,2) -- (-1,0);
\draw[blue, thick,dashed] (-1,1) -- (3,1);
\draw[blue,thick,dashed] (0,0) -- (0,2);
\draw[blue,thick,dashed] (1,0) -- (1,2);
\draw[blue,thick,dashed] (2,0) -- (2,2);
\draw[ultra thick] (0,1) circle (1cm) ;
\draw[ultra thick] (2,1) circle (1cm);
\draw (1,-0.3) node {$\sigma_1$};

\draw[ultra thick] (5,0) arc (270:360:2cm);
\draw[ultra thick] (7,2) arc (90:180:2cm);
\draw[ultra thick] (5,0) -- (7,0) -- (7,2) -- (5,2) -- (5,0);
\draw (6,-0.3) node {$\sigma_2$};

\draw[ultra thick] (9,0) -- (11,0) -- (11,2) -- (9,2) -- (9,0);
\draw[ultra thick] (10,0) -- (10,2);
\draw[ultra thick] (9,1) -- (11,1);
\draw[ultra thick] (9,0) arc (-90:90:1cm); 
\draw[ultra thick] (11,2) arc (90:270:1cm); 
\draw (10,-0.3) node {$\sigma_3$};

\draw[ultra thick] (14.5,1) circle (1cm);
\draw[ultra thick] (12.5,2) arc (180:360:2cm); 
\draw (14.5,-0.3) node {$\sigma_4$};

\end{tikzpicture}
\caption{$\sigma_1 \in \PIC$, with a polygonal mosaic shown. The sets  $\sigma_2,\sigma_3$ are in $\APIC\setminus \PIC$ since they contain line tangential vertices. The set $\sigma_4$ is not in $\APIC$ due to the curves meeting at a vertex with the same convexity.}\label{APIC}
\end{center}
\end{figure}

The components of sets in $\PIC$ or $\APIC$ are differentiable compact convex curves. A differentiable convex curve $c$ joining $\vecx$ to $\vecy$ is said to be \textbf{projectable} if the orthogonal projection of $c$ onto the line through $\vecx$ and $\vecy$ is just the line segment joining those points. As is shown in \cite{AS2}, any set in $\PIC$ can be broken into projectable component curves. This result can be extended to sets in $\APIC$.

\begin{lemma}\label{split-projectable} Suppose that $\sigma \in \APIC$. Then there exists a polygonal mosaic $\calM = \{P_i\}_{i=1}^M$ for $\sigma$ such that for all $i$, if $\sigma$ contains a convex curve in $P_i$ this curve is projectable.
\end{lemma}

\begin{proof} Let $\calM_0 = \{Q_i\}_{i=1}^{M_0}$ be a polygonal mosaic for $\sigma$. Suppose that $1 \le i \le M_0$ and that $\sigma$ contains a nonprojectable convex curve $c$ in $Q_i$. 
As is shown in \cite[Section 5]{AS2} we can recursively subdivide $Q_i$ into (finitely many) smaller convex polygons so that whenever $c$ passes through ones of the subpolygons, this part of $c$ is projectable (see Figure~\ref{projectable-pic}). We can then replace $Q_i$ in the mosaic with the subpolygons which contain part of $c$, as well as any subpolygons (such as $P_4$ in Figure~\ref{projectable-pic}) with sides containing parts of $\sigma$. Doing this for each such nonprojectable curve will give the required mosaic. 
\end{proof}

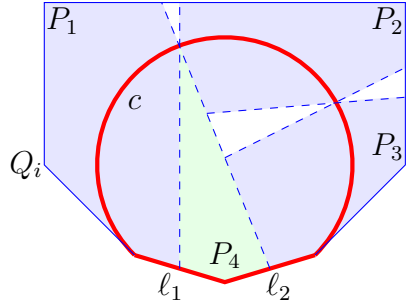
\begin{figure}
\begin{center}
\begin{tikzpicture}[scale=1.2]

\draw[fill,blue!10] (-0.5,-0.15) -- (-0.5,2.3) -- (-0.7,2.8) -- (-2,2.8) -- (-2,1) -- (-1,0) -- (-0.5,-0.15);

\draw[fill,blue!10] (-0.5,2.3) -- (-0.2,1.571) -- (1.25,1.7) -- (2,2.079) -- (2,2.8) -- (-0.5,2.8) -- (-0.5,2.3);
\draw[fill,blue!10] (0.5,-0.15) -- (1,0) -- (2,1) -- (2,1.75) -- (1.25,1.7) -- (0,1.079) -- (0.5,-0.15);
\draw[fill,green!10] (-0.5,-0.15) -- (0,-0.3) -- (0.5,-0.15) -- (-0.5,2.3) -- (-0.5,-0.15);

\draw[red, ultra thick] (1,0) arc (-45:225:1.4142);
\draw[red, ultra thick] (-1,0) -- (0,-0.3) -- (1,0);
\draw[blue] (1,0) -- (2,1) -- (2,2.8) -- (-2,2.8) -- (-2,1) -- (-1,0) ;
%\draw[blue,dashed] (-0.5,-0.15) -- (0,2.4142) -- (0.07522509898,2.8);
\draw[blue,dashed] (-0.5,-0.15) -- (-0.5,2.8);
\draw[blue,dashed] (0.5,-0.15) -- (-0.7,2.8);
\draw[blue,dashed] (0,1.079) -- (2,2.079);
\draw[blue,dashed] (-0.2,1.571) -- (2,1.75);
%\draw[blue,dashed] (0.5,-0.15) -- (0,2.4142) -- (-0.07522509898,2.8);

\draw (-1,1.7) node {$c$};
\draw (-0.6,-0.35) node {$\ell_1$};
\draw (0.6,-0.35) node {$\ell_2$};
\draw (-1.8,2.6) node {$P_1$};
\draw (1.8,2.6) node {$P_2$};
\draw (1.8,1.2) node {$P_3$};
\draw (0,0) node {$P_4$};
\draw (-2.2,1) node {$Q_i$};
\end{tikzpicture}
\caption{Lemma~\ref{split-projectable}: Subdividing a polygon $Q_i$ to produce a finer mosaic containing only projectable convex curves and sides of polygons. Here $\sigma \cap Q_i = c \cup \ell_1 \cup \ell_2$. The polygon $Q_i$ would be replaced here with $P_1,P_2,P_3,P_4$.}\label{projectable-pic}
\end{center}
\end{figure}

Note that any finite union of disjoint $\APIC$ sets is in $\SPIC$ since we can connect the components with line segments. On the other hand, the union of two $\PIC$ sets need not be in $\SPIC$ since one may get two convex curves meeting tangentially with the same convexity (as in $\sigma_4$ in Figure~\ref{APIC}). The following lemma will at least allow us to add line segments to an $\SPIC$ set.

\begin{lemma}\label{add-line-spic}
Suppose that $\sigma_0 \in \SPIC$ and that $\ell$ is a line segment in $\mR^2$. Then $\sigma = \sigma_0 \cup \ell \in \SPIC$.
\end{lemma}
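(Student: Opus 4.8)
The plan is to exhibit an almost polygonally inscribed curve containing $\sigma = \sigma_0 \cup \ell$, which by definition of $\SPIC$ places $\sigma$ in that class. First I would record two easy reductions. Since $\SPIC$ consists of all nonempty compact subsets of $\APIC$ sets, it is closed under passing to nonempty compact subsets; hence if $\sigma_0 \subseteq \tau$ with $\tau \in \APIC$, it suffices to show $\tau \cup \ell \in \SPIC$, because $\sigma \subseteq \tau \cup \ell$. Secondly, using the observation recorded before the lemma that disjoint $\APIC$ sets can be joined by line segments, I may reduce to the case where $\ell$ meets $\tau$: if $\ell \cap \tau = \emptyset$, adjoin an auxiliary segment $\ell'$ from an endpoint of $\ell$ to a point of $\tau$ and treat $\ell'$ and then $\ell$ in turn, each meeting the set built so far. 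So the task becomes: given $\tau \in \APIC$ with polygonal mosaic $\calM = \{P_i\}_{i=1}^M$ and a segment $\ell$ meeting $\tau$, produce a polygonal mosaic witnessing $\tau \cup \ell \in \APIC$.

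Let $L$ denote the line containing $\ell$. The construction proceeds in two stages. First I would enlarge the mosaic so that its union contains $\ell$: the part of $\ell$ lying outside $\bigcup_i P_i$ is a finite union of subsegments (there are finitely many crossings, since all the boundaries are polygonal), and to each such subsegment I attach a thin convex polygon having that subsegment as a side, chosen narrow enough that it meets the existing polygons only in the manner a mosaic permits (not at all, in a single shared vertex, or in a shared edge). Secondly I would slice: cutting a convex polygon by the line $L$ yields at most two convex polygons, and performing this cut with one and the same global line $L$ across all polygons of the enlarged mosaic preserves the mosaic axioms, since a crossing point on a shared edge becomes a common new vertex of the two neighbouring polygons. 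After slicing, $\ell$ lies along edges of the refined mosaic, so it contributes only to the ``subset of the sides'' part of the relevant intersections.

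The crux, and the step I expect to be the main obstacle, is maintaining the requirement that each polygon of the final mosaic contain \emph{at most one} convex curve, touching its boundary only at its two endpoints. When $L$ crosses a convex curve component $c_i = \tau \cap P_i$, the cut can split $c_i$ into arcs that, on one side of $L$, land in a single polygon as two separate arcs; and when $L$ is tangent to $c_i$ at an interior point of the arc the curve would touch a new cut edge away from its endpoints. Here convexity is exactly what rescues the argument: a line meets a differentiable convex curve in at most two points, so there are only finitely many such meeting points, and at each one I would introduce an additional local subdivision, of the kind used in the projectability construction of \cite{AS2} and in the proof of Lemma~\ref{split-projectable}, turning the meeting point into a shared vertex and separating the arcs into distinct subpolygons, each containing a single arc that touches its boundary only at its endpoints. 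It is worth noting that a tangential meeting of $\ell$ with $c_i$ produces precisely a \emph{line tangential vertex}, which an $\APIC$ set is permitted to have; this is why adjoining a line segment can never create the forbidden configuration of two curves meeting tangentially with the same convexity (as in $\sigma_4$ of Figure~\ref{APIC}), and is the structural reason the lemma holds for line segments even though the analogue for convex curves fails. Once these local fixes are in place, I would verify that the resulting finite collection satisfies both mosaic axioms and that each of its polygons meets $\tau \cup \ell$ in the union of at most one convex curve and a subset of its sides; together with connectedness (guaranteed by $\ell \cap \tau \neq \emptyset$) this gives $\tau \cup \ell \in \APIC$, whence $\sigma \in \SPIC$.
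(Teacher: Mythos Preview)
Your proposal is correct and follows essentially the same route as the paper: pass to an $\APIC$ superset $\tau_0$ with mosaic $\calM$, enclose the external pieces of the segment in new convex polygons, and then refine the polygons that the line $L$ crosses, with a case analysis governed by how $L$ meets the inscribed convex curve $c_i$ (not at all, once, or twice/tangentially). The paper carries out exactly this programme, with the local subdivisions near curve--line intersections drawn out explicitly in Figures~\ref{one-intersection} and~\ref{many-intersections}; one small caveat is that a line can meet a differentiable convex curve in an entire subsegment rather than in at most two points, but this is absorbed into the same ``two or more'' case and requires no new idea.
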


\begin{proof}
By definition, there exists $\tau_0 \in \APIC$ with $\sigma_0 \subseteq \tau_0$. 
Let $\calM = \{P_i\}_{i=1}^n$ be a polygonal mosaic for $\tau_0$ with $c_i$ denoting the convex curve in $P_i$. Without loss we may assume that $\tau_0$ contains all the sides of all the polygons in $\calM$ since $\calM$ is also a polygonal mosaic for this larger set.
Let $M = \cup_{i=1}^n P_i$ and let $\hat \ell$ denote the full line in $\mR^2$ containing $\ell$. 

If $\hat \ell$ does not intersect $M$, then one can certainly connect $\ell$ to $M$ with a line segment $\ell'$, and then find suitable polygons containing $\ell$ and $\ell'$ so that adding these polygons to $\calM$ gives a polygonal mosaic for $\tau = \tau_0 \cup \ell \cup \ell'$. Thus $\tau$ is an $\APIC$ set containing $\sigma$ and hence $\sigma \in \SPIC$.

Otherwise $\hat \ell$ intersects at least one polygon in $\calM$. Let $\ell'$ denote the smallest line segment containing $\ell$ for which $\hat \ell \cap M = \ell' \cap M$, and let $\tau = \tau_0 \cup \ell'$.  Our aim now is to adapt $\calM$ so that it is a polygonal mosaic for $\tau$. 

First choose, if necessary, convex polygons $Q_1,\dots,Q_m$ around any parts of $\ell'$ which are disjoint from $M$. 

Next, suppose that $1 \le i \le n$ and that $\ell_i = \ell' \cap P_i \ne \emptyset$. If $\ell_i$ is a subset of the boundary of $P_i$ there will be nothing to be done as $\ell_i \subset \tau_0$. We may suppose then that $\ell_i$ intersects the interior of $P_i$. 
Let $c_i^\circ = c_i \cap \intr(P_i)$. There are three cases to consider.
\begin{figure}[ht]
	\centering
	\begin{tikzpicture}[scale=2]
		
		\coordinate (v1) at (1,0);
		\coordinate (v2) at (2,0);
		\coordinate (v3) at (3,1);
		\coordinate (v4) at (2,2);
		\coordinate (v5) at (1,2);
		\coordinate (v6) at (0.3,1);
		
		\draw[fill,blue!10] (v1) -- (1.8,0) -- (1.52,0.96) -- (v5) -- (v6) -- (v1);
		\draw[fill,blue!10]  (v2) -- (v3) -- (v4) -- (1.25,2) -- (1.52,0.96) -- (v2);
		
		\draw[blue,thick] (v1) -- (v2) -- (v3) -- (v4) -- (v5) -- (v6) -- (v1);
		\draw[blue,thick,dashed] (v5) -- (v2);
		
		\draw[red,ultra thick] (2,1) arc (90:180:1cm);
		\draw[red,ultra thick] (2,1) -- (v3); 
		\draw[green,thick] (1.8,0) -- (1.25,2);

		\draw (1.55,0.5) node {$\ell_i$};
		\draw (1.2,0.8) node {$c_i$};
		\draw (0.65,1) node {$P_{i,1}$};
		\draw (2.3,1.4) node {$P_{i,2}$};
		
		%%%%%%%%%%%%%%%%%%%%%%%%%%%%%%%%%
		\coordinate (w1) at (5,0);
		\coordinate (w2) at (6,0);
		\coordinate (w3) at (7,1);
		\coordinate (w4) at (6,2);
		\coordinate (w5) at (5,2);
		\coordinate (w6) at (4.3,1);
		
		\draw[fill,blue!10] (w1) -- (w2) -- (5.29289,0.7071) -- (4.82944,0.243655 ) -- (w1);
		\draw[fill,blue!10]  (w2) -- (w3) -- (6.29289,1.70717) -- (5.29289,0.7071) -- (w2);
		
		\draw[blue,thick] (w1) -- (w2) -- (w3) -- (w4) -- (w5) -- (w6) -- (w1);
		
		\draw[green,ultra thick] (4.82944,0.243655) -- (6.29289,1.70717);
		\draw[red,ultra thick] (6,1) arc (90:180:1cm);
		\draw[red,ultra thick] (6,1) -- (w3); 
		\draw[blue,thick,dashed] (w2) -- (4.65,1.4);
		
		\draw (6,1.6) node {$\ell_i$};
		\draw (6.5,1.1) node {$c_i$};
		\draw (5.5,0.13) node {$P_{i,1}$};
		\draw (6,0.3) node {$P_{i,2}$};
	\end{tikzpicture}
	\caption{The cases where $\ell_i$ meets $c_i$ once.}\label{one-intersection}
	
\end{figure}
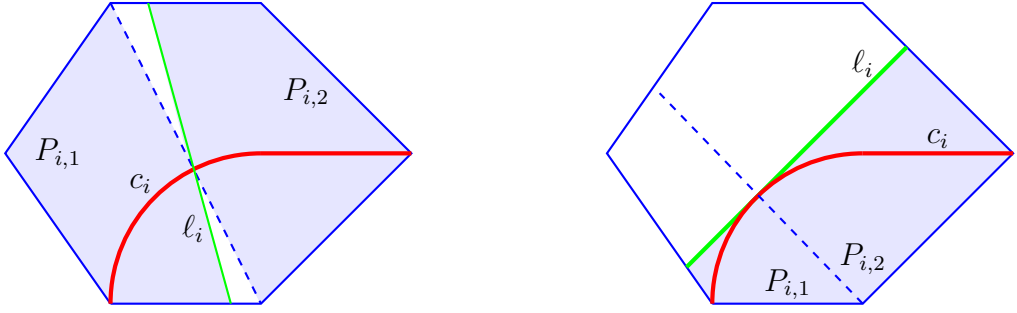

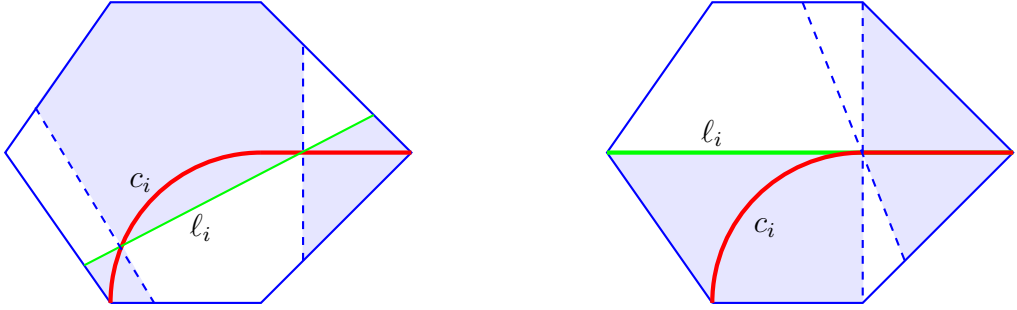
\begin{figure}[ht]
	\centering
	\begin{tikzpicture}[scale=2]
		
		\coordinate (v1) at (1,0);
		\coordinate (v2) at (2,0);
		\coordinate (v3) at (3,1);
		\coordinate (v4) at (2,2);
		\coordinate (v5) at (1,2);
		\coordinate (v6) at (0.3,1);
		
		\draw[fill,blue!10] (0.825,0.25) -- (v1) -- (1.29,0) -- (1.075,0.38) -- (0.825,0.25);
		\draw[fill,blue!10]  (1.075,0.38) -- (2.28,1) -- (2.28,1.72) -- (v4) -- (v5) -- (0.5,1.3);
		\draw[fill,blue!10] (2.28,0.28) -- (v3) -- (2.75,1.25) -- (2.28,1) -- (2.28,0.28);
		
		\draw[blue,thick] (v1) -- (v2) -- (v3) -- (v4) -- (v5) -- (v6) -- (v1);

		\draw[red,ultra thick] (2,1) arc (90:180:1cm);
		\draw[red,ultra thick] (2,1) -- (v3); 
		\draw[green,thick] (0.825,0.25) -- (2.75,1.25);
		\draw[blue,thick,dashed] (0.5,1.3) -- (1.29,0);
		\draw[blue,thick,dashed] (2.28,0.28) -- (2.28,1.72);
		
		\draw (1.6,0.5) node {$\ell_i$};
		\draw (1.2,0.8) node {$c_i$};
		
		%%%%%%%%%%%%%%%%%%%%%%%%%%%%%%%%%
		\coordinate (w1) at (5,0);
		\coordinate (w2) at (6,0);
		\coordinate (w3) at (7,1);
		\coordinate (w4) at (6,2);
		\coordinate (w5) at (5,2);
		\coordinate (w6) at (4.3,1);
		
		\draw[fill,blue!10] (w1) -- (w2) -- (6,1) -- (w6) -- (w1);
		\draw[fill,blue!10]   (6.28,0.28) -- (w3) -- (w4) -- (6,1) -- (6.28,0.28);
		
		\draw[blue,thick] (w1) -- (w2) -- (w3) -- (w4) -- (w5) -- (w6) -- (w1);
		
		\draw[green,ultra thick] (w6) -- (w3);
		\draw[red,ultra thick] (6,1) arc (90:180:1cm);
		\draw[red,ultra thick] (6,1) -- (w3); 
		
		\draw[blue,thick,dashed] (w4) -- (w2);
		\draw[blue,thick,dashed] (5.6,2) -- (6.28,0.28);
		
		\draw (5,1.13) node {$\ell_i$};
		\draw (5.35,0.5) node {$c_i$};
	\end{tikzpicture}
	\caption{The cases where $\ell_i$ meets $c_i$ more than once. One may also have $\ell_i$ intersecting $c_i$ on a line segment whose endpoints are both in the interior of $P_i$.}\label{many-intersections}
	
\end{figure}

\begin{enumerate}
  \item $\ell_i \cap c_i^\circ = \emptyset$. In this case, $\ell_i$ splits $P_i$ into two convex polygons, and in this case we will replace $P_i$ in $\calM$ with the smaller polygon containing $c_i$.
  \item $\ell_i$ intersects $c_i^\circ$ at a single point $\vecz$. In this case, as in the proof of the Partition Lemma \cite[Lemma 1]{AS2}, we may replace $P_i$ with two smaller convex polygons $P_{i,1}$ and $P_{i,2}$ whose union contains $c_i$, and so that $\ell_i$ lies in the union of the boundaries of these polygons. (See Figure~\ref{one-intersection}.)
   \item $\ell_i$ intersects $c_i^\circ$ in more than one point. In this case we can partition $c_i$ into two or three parts as in Figure~\ref{many-intersections}. 
\end{enumerate}

By making these replacements for each polygon in $\calM$ which meets $\hat \ell$, and by adding in $Q_1,\dots,Q_m$ we will produce a polygonal mosaic for $\tau$, which shows that $\sigma \in \SPIC$. (See Figure~\ref{new-mosaic}.)
\end{proof}

\begin{figure}[ht]
\centering
\begin{tikzpicture}[scale=1.5]

\coordinate (v1) at (0,4);
\coordinate (v2) at (1,3);
\coordinate (v3) at (2,4);
\coordinate (v4) at (3,4);
\coordinate (v5) at (4,3);
\coordinate (v6) at (5,3);
\coordinate (v7) at (5,5);

%\draw[blue,thick] (v1) -- (v2) -- (v3) -- (v4) -- (v5) -- (v6) -- (v1);
\draw[fill,blue!7] (v1) -- (v2) --  (1.2929,3.7071) -- (1,4) -- (v1);
\draw[fill,blue!7] (1.2929,3.7071) -- (2,3) -- (3,3) -- (4,3) --  (4,2.8) -- (5.7071,2.8) -- (5.7071,3.2929) --  (6.2,3.2929) -- (6.2,4.7071) --  (5.7071,4.7071) -- (5.7071,5) -- (3,5) -- (v4) -- (2.5,4.5) -- (1.7,4.5) -- (1.2929,3.7071);

\draw[green,line width=0.7mm] (-0.5,4) -- (6.5,4);
\draw[red,ultra thick] (v1) -- (v2);
\draw[red,ultra thick] (v2) arc (0:90:1cm);
\draw[red,ultra thick] (v3) arc (90:180:1cm);
\draw[red,ultra thick] (v3) -- (v4); 
\draw[red,ultra thick] (v5) arc (0:90:1cm);
\draw[red,ultra thick] (v5) -- (v6); 
\draw[red,ultra thick] (v6) arc (-90:90:1cm);
\draw[red,ultra thick] (v7) -- (v4); 

\draw[blue,thick,dashed] (v1) -- (v2) -- (1,4) -- (v1);
\draw[blue,thick,dashed] (v2) -- (1.2929,3.7071) -- (1,4);
\draw[blue,thick,dashed] (1.2929,3.7071) -- (2,3) -- (3,3) -- (v4) -- (2.5,4.5) -- (1.7,4.5) -- (1.2929,3.7071);
\draw[blue,thick,dashed]  (3,3) -- (v5) -- (5,4) -- (v4);
\draw[blue,thick,dashed]   (v5) -- (4,2.8) -- (5.7071,2.8) -- (5.7071,3.2929) -- (5,4);
\draw[blue,thick,dashed]  (5,4) -- (v7) -- (3,5) -- (v4);
\draw[blue,thick,dashed]   (5.7071,3.2929) -- (6.2,3.2929) -- (6.2,4.7071) --  (5.7071,4.7071) -- (5,4);
\draw[blue,thick,dashed]  (v7) -- (5.7071,5) -- (5.7071,4.7071);
 
\draw (-0.2,4.13) node {$\ell$};
\draw (4,4.3) node {$\sigma_0$};

%%%%%%%%%%%%%%%%%%%%%%%%%%%%%%%%%%%%%%%%%%%%
\coordinate (w1) at (0,1);
\coordinate (w2) at (1,0);
\coordinate (w3) at (2,1);
\coordinate (w4) at (3,1);
\coordinate (w5) at (4,0);
\coordinate (w6) at (5,0);
\coordinate (w7) at (5,2);

\draw[fill,green!10] (-0.5,1) -- (-0.25,0.85) -- (w1) -- (-0.25,1.15) -- (-0.5,1);
\draw[fill,green!10] (1.43,1) -- (1.21,1.15) -- (1,1) -- (1.2929,0.7071) -- (1.43,1);

\draw[fill,blue!7]  (2,0) -- (2,1) -- (1.43,1) -- (1.2929,0.7071) -- (2,0);
\draw[fill,blue!7] (w3) -- (2.3,0) -- (3,0) -- (w4) -- (2.5,1.5) -- (2.0,1.5) -- (w3); 
\draw[fill,blue!7] (5,1) -- (6,1) -- (6.2,1.13333) -- (6.2,1.7071) -- (5.7071,1.7071) -- (5,1);
\draw[fill,blue!7] (5.4,0.6) --  (5.7071,0.2929) --  (6.2,0.2929) -- (6.2,1) -- (6,1) -- (5.4,0.6);
\draw[fill,blue!7] (w1) -- (w2) -- (1,1) -- (w1);
\draw[fill,blue!7] (w2) -- (1.2929,0.7071) -- (1,1);
\draw[fill,blue!7] (3,0) -- (w5) -- (4,-0.2) -- (5.7071,-0.2) -- (5.7071,0.2929) -- (5,1) -- (5.7071,1.7071)  --   (5.7071,2) -- (3,2) -- (3,0);

\draw[fill,green!10] (6.2,1) -- (6.35,0.85) -- (6.5,1) -- (6.35,1.15) -- (6.2,1);

\draw[red,ultra thick] (-0.5,1) -- (6.5,1);
\draw[red,ultra thick] (w1) -- (w2);
\draw[red,ultra thick] (w2) arc (0:90:1cm);
\draw[red,ultra thick] (w3) arc (90:180:1cm);
\draw[red,ultra thick] (w3) -- (w4); 
\draw[red,ultra thick] (w5) arc (0:90:1cm);
\draw[red,ultra thick] (w5) -- (w6); 
\draw[red,ultra thick] (w6) arc (-90:90:1cm);
\draw[red,ultra thick] (w7) -- (w4); 

\draw[blue,thick,dashed] (-0.5,1) -- (-0.25,0.85) -- (w1) -- (-0.25,1.15) -- (-0.5,1);
\draw[blue,thick,dashed] (w1) -- (w2) -- (1,1) -- (w1);
\draw[blue,thick,dashed] (w2) -- (1.2929,0.7071) -- (1,1);
\draw[blue,thick,dashed] (1.2929,0.7071) -- (2,0) -- (3,0) -- (w4) -- (2.5,1.5) -- (1.7,1.5) -- (1.2929,0.7071);
\draw[blue,thick,dashed] (1.2929,0.7071) -- (2,0) -- (2,1) -- (1.43,1);
\draw[blue,thick,dashed]  (2.0,1.5) -- (w3) -- (2.3,0) -- (3,0); % -- (w4) -- (2.5,1.5) -- (2.0,1.5) -- (w3);
\draw[blue,thick,dashed] (1.43,1) -- (1.21,1.15) -- (1,1);
\draw[blue,thick,dashed] (5.4,0.6) -- (6.2,1.13333);

\draw[blue,thick,dashed]  (3,0) -- (w5) -- (5,1) -- (w4);
\draw[blue,thick,dashed]   (w5) -- (4,-0.2) -- (5.7071,-0.2) -- (5.7071,0.2929) -- (5,1);
\draw[blue,thick,dashed]  (5,1) -- (w7) -- (3,2) -- (w4);
\draw[blue,thick,dashed]   (5.7071,0.2929) -- (6.2,0.2929) -- (6.2,1.7071) --  (5.7071,1.7071) -- (5,1);
\draw[blue,thick,dashed]  (w7) -- (5.7071,2) -- (5.7071,1.7071);
\draw[blue,thick,dashed]  (6.2,1) -- (6.35,0.85) -- (6.5,1) -- (6.35,1.15) -- (6.2,1);
\draw (4,1.35) node {$\sigma$};

\end{tikzpicture}
\caption{Lemma~\ref{add-line-spic}: adapting a mosaic for the addition of an extra line segment. The top diagram shows a line segment $\ell$ (in green) and a $\SPIC$ set $\sigma_0$ (in red). A polygonal mosaic for $\sigma_0$ is shown by the dashed polygons. \hfill\break
The lower diagram shows the addition of three new polygons (in green) and splitting of two polygons into smaller pieces to form a polygonal mosaic for $\sigma = \sigma_0 \cup \ell$.}\label{new-mosaic}

\end{figure}
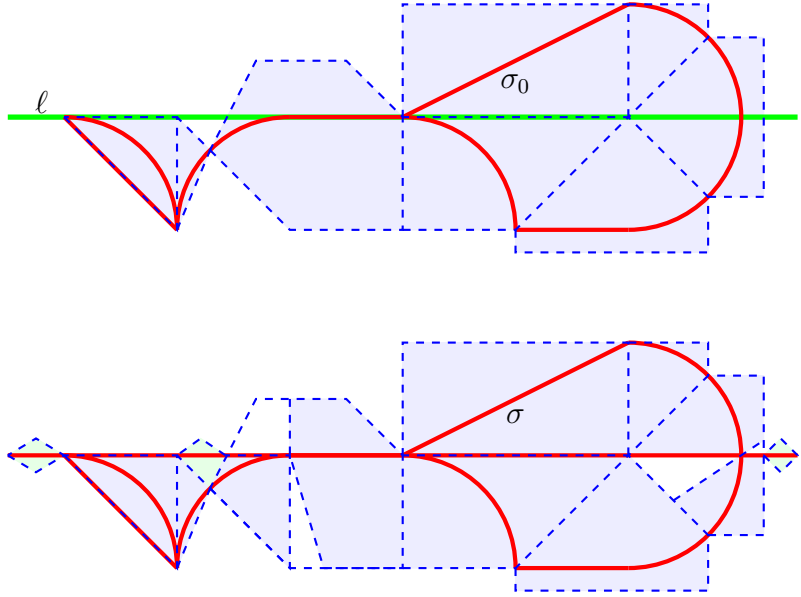

%%%%%%%%%%%%%%%%%%%%%%%%%%%%%%%%%%
%%%%%%%%%%%%%%%%%%%%%%%%%%%%%%%%%%

\section{Extending from the real line}

The first, and rather elementary, step in the proof of the main theorem is to show that if $\sigma_0$ is any compact subset of the real line, and $f \in AC(\sigma_0)$, then $f$ can be extended  to any larger compact subset.

Suppose then that $\sigma_0 \subseteq \sigma \subseteq \mR$. Let $a_0 = \min \sigma_0$ and let $b_0 = \max \sigma_0$. Let $J = [a,b]$ denote a compact interval containing $\sigma$ in its interior. Given $f \in AC(\sigma_0)$ define the extension $\iota(f): J \to \mC$ by making it constant on $[a,a_0]$ and $[b_0,b]$ and piecewise linear on the open intervals of $[a_0,b_0] \setminus \sigma_0$. 

\begin{lemma}\cite[Corollary 2.14]{AD1}\label{iota-lemma}
The map $\iota$ is a linear isometry from $AC(\sigma_0)$ to $AC(J)$. 
\end{lemma}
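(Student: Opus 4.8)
The plan is to reduce everything to the classical one-dimensional theory, exploiting Theorem~\ref{bv-properties}(iii), which identifies $\var(\cdot,J)$ on the interval $J$ with the ordinary variation, together with the corresponding fact that for a compact $\sigma_0 \subseteq \mR$ the quantity $\var(f,\sigma_0)$ coincides with the classical variation $\sup \sum_k |f(x_k)-f(x_{k-1})|$ taken over increasing lists $x_0 < \dots < x_n$ in $\sigma_0$ (on a line an increasing list has variation factor $1$, and such lists are extremal). Linearity of $\iota$ is immediate from the construction, since constant extension and linear interpolation through prescribed endpoint values are linear operations.

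First I would prove the upper bound $\norm{\iota(f)}_{BV(J)} \le \norm{f}_{BV(\sigma_0)}$. For the sup-norm part, on each bounded gap $(p,q)$ of $[a_0,b_0] \setminus \sigma_0$ the value $\iota(f)$ is a convex combination of $f(p)$ and $f(q)$ (with $p,q \in \sigma_0$ since $\sigma_0$ is closed), and on $[a,a_0]$, $[b_0,b]$ it equals $f(a_0)$ or $f(b_0)$; hence $\sup_J |\iota(f)| \le \sup_{\sigma_0}|f|$. For the variation, additivity of the classical variation over the adjacent intervals $[a,a_0]$, $[a_0,b_0]$, $[b_0,b]$ together with the fact that $\iota(f)$ is constant on the two outer intervals gives $\var(\iota(f),J) = \var(\iota(f),[a_0,b_0])$. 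The key step is then $\var(\iota(f),[a_0,b_0]) \le \var(f,\sigma_0)$: given any partition of $[a_0,b_0]$, refine it by inserting the endpoints of every gap it meets (refinement never decreases the variation sum); since $\iota(f)$ is monotone (linear) on each gap, the partition points lying strictly inside a gap telescope away, leaving a partition all of whose points lie in $\sigma_0$, whose sum is therefore at most $\var(f,\sigma_0)$.

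Next I would check that $\iota(f) \in AC(J)$. For a polynomial $p$, the mean value theorem shows each gap slope equals $p'(\xi)$ for some $\xi \in [a_0,b_0]$, so these slopes are uniformly bounded by $\sup_{[a_0,b_0]}|p'|$; consequently $\iota(p)$ is Lipschitz on $J$ and hence lies in the classical space $AC(J)$. For general $f \in AC(\sigma_0)$, choose polynomials $p_n \to f$ in $BV(\sigma_0)$ (possible since $AC(\sigma_0)$ is by definition the closure of the polynomials). The upper bound of the previous paragraph, applied to $p_n - f$, gives $\norm{\iota(p_n) - \iota(f)}_{BV(J)} = \norm{\iota(p_n-f)}_{BV(J)} \le \norm{p_n - f}_{BV(\sigma_0)} \to 0$, so $\iota(f)$ is a $BV(J)$-limit of functions in the closed subspace $AC(J)$ and therefore belongs to $AC(J)$.

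Finally, the reverse inequality comes essentially for free from the restriction property: since $\iota(f) \in AC(J)$ and $\iota(f)|\sigma_0 = f$, part~\ref{restr} of Theorem~\ref{ac-properties} gives $\norm{f}_{BV(\sigma_0)} = \norm{\iota(f)|\sigma_0}_{BV(\sigma_0)} \le \norm{\iota(f)}_{BV(J)}$. Combined with the upper bound this yields $\norm{\iota(f)}_{BV(J)} = \norm{f}_{BV(\sigma_0)}$, so $\iota$ is a linear isometry into $AC(J)$. I expect the main obstacle to be the variation identity $\var(\iota(f),[a_0,b_0]) \le \var(f,\sigma_0)$, and in particular justifying that for sets on a line the planar variation-factor definition of $\var(f,\sigma_0)$ really does collapse to the classical increasing-list variation; the gap-refinement argument is clean once that reduction is in hand.
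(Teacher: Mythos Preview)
Your argument is correct. The paper does not supply its own proof of this lemma; it simply cites \cite[Corollary~2.14]{AD1}, so there is no in-paper argument to compare against. Your self-contained route---sup-norm via convex combinations, variation via the gap-refinement/telescoping trick, $AC$-membership by pushing polynomial approximants through the contractive $\iota$, and the reverse inequality from the restriction property---is exactly the natural one and matches in spirit what one finds in the cited source.

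Two minor remarks. First, ``monotone'' is the wrong word for complex-valued $\iota(f)$ on a gap; what you actually use (and what you parenthetically say) is that $\iota(f)$ is \emph{affine} there, so the increments lie on a single ray and $\sum_j |\iota(f)(t_{j+1})-\iota(f)(t_j)| = |\iota(f)(q)-\iota(f)(p)|$ exactly. Second, your flagged obstacle---that for a compact $\sigma_0 \subseteq \mR$ the planar variation collapses to the classical increasing-list variation---is genuine but standard: increasing lists have variation factor~$1$, and conversely any finite list on a line can be handled by the argument in \cite[Section~3]{AD1} (or by noting that reordering into monotone runs controls $\cvar/\vf$). Once that is granted, your proof goes through cleanly.
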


Extending from a subset of the line to a larger subset of the plane is straightforward. By affine invariance, the following result can be applied to any function which is constant on a family of parallel lines.

\begin{theorem}\label{real-ext} Suppose that $\sigma_0$ is a nonempty compact subset of $\mR$ and that $\sigma \subseteq \mR^2$ is a compact superset of $\sigma_0$.  If $f \in AC(\sigma_0)$ then there exists an extension $\hatf \in AC(\sigma)$ with $\ssnorm{\hatf}_{BV(\sigma)} = \norm{f}_{BV(\sigma_0)}$.
\end{theorem}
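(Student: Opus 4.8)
The plan is to reduce everything to the one–dimensional extension map $\iota$ of Lemma~\ref{iota-lemma} and then to lift the resulting real–variable function to the plane using the ``constant in $y$'' extension of Theorem~\ref{ac-properties}\ref{r-ext}. The only subtlety is obtaining \emph{equality} of the norms rather than merely an inequality, and this will come from a sandwich argument using the restriction estimate of Theorem~\ref{ac-properties}\ref{restr}.

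First I would let $\sigma_x \subseteq \mR$ be the projection of $\sigma$ onto the $x$-axis; this is compact, and since every point of $\sigma_0$ already lies on the $x$-axis we have $\sigma_0 \subseteq \sigma_x$. Choosing a compact interval $J = [a,b]$ with $\sigma_x \subseteq \intr(J)$, Lemma~\ref{iota-lemma} produces $\iota(f) \in AC(J)$ with $\norm{\iota(f)}_{BV(J)} = \norm{f}_{BV(\sigma_0)}$, and $\iota(f)$ agrees with $f$ on $\sigma_0$. Restricting to $\sigma_x$ and applying Theorem~\ref{ac-properties}\ref{restr}, the function $g = \iota(f)|\sigma_x$ lies in $AC(\sigma_x)$ with $\norm{g}_{BV(\sigma_x)} \le \norm{f}_{BV(\sigma_0)}$.

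Next I would apply the plane extension of Theorem~\ref{ac-properties}\ref{r-ext} to $g$: the function $\hatf(x,y) = g(x)$ belongs to $AC(\sigma)$ and satisfies $\ssnorm{\hatf}_{BV(\sigma)} \le \norm{g}_{BV(\sigma_x)} \le \norm{f}_{BV(\sigma_0)}$. Since the points of $\sigma_0$ are exactly those of the form $(x,0)$, we have $\hatf(x,0) = g(x) = \iota(f)(x) = f(x)$, so $\hatf$ is genuinely an extension of $f$. For the reverse norm inequality I would apply Theorem~\ref{ac-properties}\ref{restr} once more, now to $\hatf$ and the subset $\sigma_0 \subseteq \sigma$, giving $\norm{f}_{BV(\sigma_0)} = \ssnorm{\hatf|\sigma_0}_{BV(\sigma_0)} \le \ssnorm{\hatf}_{BV(\sigma)}$. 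Combining the two inequalities yields $\ssnorm{\hatf}_{BV(\sigma)} = \norm{f}_{BV(\sigma_0)}$, as required.

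There is no serious obstacle here; as the surrounding text indicates, this step is elementary, being a chaining together of three already established facts. The only point requiring a little care is the norm \emph{equality}: each step of the construction ($\iota$, restriction to $\sigma_x$, and the constant-in-$y$ lift) can only preserve or decrease the $BV$ norm, so one must invoke the restriction estimate in the opposite direction to close the gap and recover equality.
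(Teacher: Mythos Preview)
Your proof is correct and follows essentially the same approach as the paper: use Lemma~\ref{iota-lemma} together with Theorem~\ref{ac-properties}\ref{r-ext} to obtain an extension with norm at most $\norm{f}_{BV(\sigma_0)}$, and then invoke the restriction estimate Theorem~\ref{ac-properties}\ref{restr} to force equality. Your version is slightly more explicit in that you restrict $\iota(f)$ to $\sigma_x$ before applying the constant-in-$y$ lift, but this is just making precise what the paper leaves implicit.
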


\begin{proof}
Combining Lemma~\ref{iota-lemma} and Theorem~\ref{ac-properties}\ref{r-ext} gives an extension $\hatf$ to $\sigma$ with norm at most $\norm{f}_{BV(\sigma_0)}$. That the norms must be equal follows from Theorem~\ref{ac-properties}\ref{restr}.
\end{proof}

%%%%%%%%%%%%%%%%%%%%%%%%%%%%%%%%%%%%%%%%
%%%%%%%%%%%%%%%%%%%%%%%%%%%%%%%%%%%%%%%%

\section{Convex curves}

The results for subsets of the real line can be extended to subsets of curves as long as these curves are well enough behaved. The following theorem contains the content of Section~8 of \cite{AS2}. (We remind the reader that a projectable curve is differentiable except at its endpoints.)

\begin{theorem}\label{c-isom-01}
Suppose that $c$ is a projectable convex curve in the plane. Then there exists a Banach algebra isomorphism $\Phi: AC(c) \to AC[0,1]$ with $\norm{\Phi} \le 2$ and $\norm{\Phi^{-1}} =1$.
\end{theorem}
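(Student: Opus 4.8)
The plan is to realise $\Phi$ and its inverse as the two obvious ``reparametrisation'' maps and to reduce the whole statement to a single geometric estimate on variation factors. Using the affine invariance of $AC$ (Theorem~\ref{ac-properties}\ref{aff-inv}) together with a rotation, a translation and a rescaling of the $x$-axis, I may assume that $c$ joins $(0,0)$ to $(1,0)$ and is the graph $c = \{(t,\psi(t)) \st t \in [0,1]\}$ of a continuous convex function $\psi$ on $[0,1]$ with $\psi(0)=\psi(1)=0$ (replacing $\psi$ by $-\psi$ if necessary so that it is convex rather than concave). The orthogonal projection $\pi\colon c \to [0,1]$, $\pi(x,y)=x$, is then a homeomorphism with inverse $t \mapsto (t,\psi(t))$. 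I would define $\Phi\colon AC(c) \to AC[0,1]$ by $\Phi(f)(t) = f(t,\psi(t))$ and $\Psi\colon AC[0,1]\to AC(c)$ by $\Psi(h)(x,y)=h(x)$. Both are visibly linear and multiplicative, and a direct check (using $y=\psi(x)$ on $c$) shows that $\Psi\circ\Phi$ and $\Phi\circ\Psi$ are the respective identities. Hence, once both maps are shown to be well defined and bounded, $\Phi$ is a Banach algebra isomorphism with $\Phi^{-1}=\Psi$.

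The map $\Psi$ is handled immediately: $\Psi(h)$ is exactly the function $(x,y)\mapsto h(x)$, so Theorem~\ref{ac-properties}\ref{r-ext} (with $\sigma=c$ and $\sigma_x=[0,1]$) gives $\Psi(h)\in AC(c)$ and $\norm{\Psi(h)}_{BV(c)} \le \norm{h}_{BV[0,1]}$, whence $\norm{\Phi^{-1}}\le 1$. Evaluating on the constant function $\mathbf 1$, which has norm $1$ in both algebras, gives the reverse inequality, so $\norm{\Phi^{-1}}=1$.

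The substance of the proof is the bound $\norm{\Phi}\le 2$. Since $\Phi(f)$ and $f$ take the same values, $\norminf{\Phi(f)}=\sup_c|f|$, so it remains to estimate the classical variation $\var(\Phi(f),[0,1])$, which agrees with the $BV[0,1]$ variation by Theorem~\ref{bv-properties}(iii). Any partition $0=t_0<\dots<t_n=1$ produces the list $S=[(t_0,\psi(t_0)),\dots,(t_n,\psi(t_n))]$ on $c$, and $\sum_k |\Phi(f)(t_k)-\Phi(f)(t_{k-1})| = \cvar(f,S) \le \vf(S)\,\var(f,c)$. The key geometric claim is that $\vf(S)\le 2$ for every such monotone list. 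To see this, note that because $\psi$ is convex the chord slopes are nondecreasing, so the inscribed polygonal path $\gamma_S$ is the graph of a convex piecewise-linear function $\tilde\psi$. For a non-vertical line $\ell\colon y=ax+b$ the set $\gamma_S^{-1}(\ell)$ is carried by the strictly increasing $x$-coordinate to the level set $\{x \st \tilde\psi(x)-ax=b\}$ of the convex function $\tilde\psi(x)-ax$, which is either at most two points or a single interval, hence has at most two connected components; a vertical line meets the graph in a single point. Thus $\vf(S,\ell)\le 2$ for all $\ell$, giving $\vf(S)\le 2$ and therefore $\var(\Phi(f),[0,1])\le 2\var(f,c)$. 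Combining, $\norm{\Phi(f)}_{BV[0,1]} \le \sup_c|f| + 2\var(f,c) \le 2\norm{f}_{BV(c)}$.

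Finally I would confirm that $\Phi$ lands in $AC[0,1]$ and not merely $BV[0,1]$. As $\psi$ is convex and continuous on $[0,1]$ it is absolutely continuous there, so for any polynomial $p$ the composition $t\mapsto p(t,\psi(t))$ is absolutely continuous, being a (locally Lipschitz) polynomial composed with the $AC[0,1]$ path $t\mapsto(t,\psi(t))$; that is, $\Phi(p)\in AC[0,1]$. Since $\Phi$ is bounded on $BV(c)$, since $AC(c)$ is the $BV(c)$-closure of the polynomials, and since $AC[0,1]$ is closed in $BV[0,1]$, it follows that $\Phi(AC(c))\subseteq AC[0,1]$. I expect the main obstacle to be the geometric claim $\vf(S)\le 2$; everything else is bookkeeping, and the factor $2$ in $\norm{\Phi}$ is precisely the price of this variation-factor bound.
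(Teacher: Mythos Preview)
The paper does not prove this theorem itself: it records the statement as ``the content of Section~8 of \cite{AS2}'' and only adds that the isomorphism may be taken as $\Phi_h(f)=f\circ h$ for a differentiable parameterisation $h\colon[0,1]\to c$. Your argument is a correct, self-contained proof of precisely this form, with $h(t)=(t,\psi(t))$ after an affine normalisation, and the key step---the bound $\vf(S)\le 2$ for monotone lists on the graph of a convex function, obtained by noting that the piecewise-linear interpolant is again convex so any line meets it in at most two components---is exactly the geometric content one needs. The remaining pieces (Theorem~\ref{ac-properties}\ref{r-ext} for $\norm{\Phi^{-1}}=1$, absolute continuity of the convex function $\psi$ to push polynomials into $AC[0,1]$, and density to finish) are routine and correctly handled.
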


More specifically, if $h: [0,1] \to c$ is a differentiable parameterization of $c$, then we can take the isomorphism to be $\Phi_h(f) = f \circ h$. 

\begin{lemma}\label{subsets-of-convex}
Suppose that $c$ is a projectable convex curve parameterized by a differentiable function $h: [0,1] \to c$, and that $\emptyset \ne \sigma_0 \subseteq c$. Let $\tau_0 =  h^{-1}(\sigma_0) \subseteq [0,1]$. Then $\Phi_0(f) = f \circ h$ defines an isomorphism between $AC(\sigma_0)$ and $AC(\tau_0)$.
\end{lemma}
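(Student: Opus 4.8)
The plan is to extract from the geometry of $c$ the fact that the parameterization $h$ is a \emph{bijection} of $[0,1]$ onto $c$ (being projectable, $c$ is a graph over its chord, so $h$ is injective), and hence that $h$ restricts to a bijection of $\tau_0$ onto $\sigma_0$. Consequently $\Phi_0(f) = f\circ h$ is already a linear algebra isomorphism between the algebras of \emph{all} complex functions on $\sigma_0$ and on $\tau_0$, with inverse $g\mapsto g\circ h^{-1}$, and it visibly preserves the supremum norm. The whole content of the lemma is therefore that $\Phi_0$ and $\Phi_0^{-1}$ carry absolutely continuous functions to absolutely continuous functions and are bounded for the $\BV$ norms. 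The point to keep in mind throughout is that one may \emph{not} argue by first extending $f\in AC(\sigma_0)$ to a function in $AC(c)$ and then composing with Theorem~\ref{c-isom-01}: the existence of such an extension is exactly the open problem motivating the paper. So the two directions must be treated separately.

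For the inverse map I would argue by extension along the real line, where extension is available. Given $g\in AC(\tau_0)$, since $\tau_0\subseteq[0,1]\subseteq\mR$, Lemma~\ref{iota-lemma} (or Theorem~\ref{real-ext}) produces an isometric extension $\tilde g\in AC[0,1]$ with $\tilde g|\tau_0=g$. Applying $\Phi_h^{-1}$, which has norm $1$ by Theorem~\ref{c-isom-01}, gives $\tilde g\circ h^{-1}\in AC(c)$ with $\ssnorm{\tilde g\circ h^{-1}}_{\BV(c)}\le\norm{g}_{\BV(\tau_0)}$. Restriction to $\sigma_0$ is norm-nonincreasing and lands in $AC(\sigma_0)$ by Theorem~\ref{ac-properties}\ref{restr}, and that restriction equals $g\circ h^{-1}=\Phi_0^{-1}(g)$ because $\tilde g$ agrees with $g$ on $\tau_0$. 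Hence $\Phi_0^{-1}(g)\in AC(\sigma_0)$ and $\norm{\Phi_0^{-1}}\le 1$.

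For the forward map $\Phi_0$ I would argue directly at the level of finite lists, which is where the real work lies. Fix a finite list $S=[\vecx_0,\dots,\vecx_n]$ of points of $\sigma_0$ and let $T=[t_0,\dots,t_n]$ be the corresponding list in $\tau_0$, where $\vecx_k=h(t_k)$. Since $h$ is a bijection, $\cvar(f,S)=\cvar(f\circ h,T)$ term by term, and the supremum norms of $f$ on $\sigma_0$ and of $f\circ h$ on $\tau_0$ agree. The only remaining ingredient is the comparison of variation factors $\vf(S)\le 2\,\vf(T)$. This is precisely the geometric estimate established, for \emph{arbitrary} finite lists of points on a projectable convex curve, in the proof of Theorem~\ref{c-isom-01}, i.e. in Section~8 of \cite{AS2}; as points of $\sigma_0$ are in particular points of $c$, it applies verbatim to the lists considered here. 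From $\vf(T)\ge\tfrac12\vf(S)$ one gets $\cvar(f\circ h,T)/\vf(T)\le 2\,\cvar(f,S)/\vf(S)$, and taking the supremum over all $S$ yields $\var(f\circ h,\tau_0)\le 2\,\var(f,\sigma_0)$, so $\norm{\Phi_0}\le 2$ on $\BV$. To see that $\Phi_0$ actually lands in $AC(\tau_0)$, write $f$ as a $\BV(\sigma_0)$-limit of polynomials $p_j$; each $p_j\circ h$ is the restriction to $\tau_0$ of a differentiable (hence absolutely continuous) function on $[0,1]$ and so lies in $AC(\tau_0)$, and the bound gives $p_j\circ h\to f\circ h$ in $\BV(\tau_0)$, whence $f\circ h\in AC(\tau_0)$.

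Combining the two estimates shows that $\Phi_0$ is a Banach algebra isomorphism of $AC(\sigma_0)$ onto $AC(\tau_0)$ with $\norm{\Phi_0}\le 2$ and $\norm{\Phi_0^{-1}}\le 1$. The main obstacle is the forward direction: one cannot relay $f$ through $AC(c)$, so the bound has to come from the direct list comparison rather than from composing restriction with Theorem~\ref{c-isom-01}. The conceptual reason this works is that the variation factor is defined purely through finite lists of points, so the inequality $\vf(S)\le 2\,\vf(T)$ localizes automatically from the whole curve $c$ to any subset $\sigma_0$.
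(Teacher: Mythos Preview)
Your proof is correct, but the paper argues quite differently. Rather than treating the two directions asymmetrically, the paper invokes the structural results of \cite{AS3}: since $\Phi_h$ is an isomorphism $AC(c)\to AC[0,1]$, \cite[Theorem~4.7]{AS3} forces $h\in AC[0,1]$ and $h^{-1}\in AC(c)$; restricting gives $h|\tau_0\in AC(\tau_0)$ and $h^{-1}|\sigma_0\in AC(\sigma_0)$, and then \cite[Theorem~3.11]{AS3} and \cite[Theorem~4.7]{AS3} together show that composition with such maps yields a $BV$ and hence $AC$ isomorphism between $\sigma_0$ and $\tau_0$. Your route is more concrete and self-contained: for $\Phi_0^{-1}$ you exploit the availability of isometric extension along $\mR$ to factor through $\Phi_h^{-1}$, while for $\Phi_0$ you localize the list-level inequality $\vf(S)\le 2\,\vf(T)$ underlying Theorem~\ref{c-isom-01} directly to subsets of $c$. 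What your approach buys is explicit norm bounds ($\norm{\Phi_0}\le 2$, $\norm{\Phi_0^{-1}}\le 1$) matching those on the full curve, and independence from the machinery of \cite{AS3}; what the paper's approach buys is brevity and a clean symmetric treatment once those external results are in hand. Your observation that one cannot handle the forward direction by first extending $f$ to $AC(c)$ is exactly the point---the paper sidesteps this not by extension but by the abstract composition criterion.
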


\begin{proof}
First note that since $\Phi_h(f) = f \circ h$ defines an isomorphism between $AC(c)$ and $AC[0,1]$, \cite[Theorem~4.7]{AS3} implies that $h \in AC[0,1]$ and $h^{-1} \in AC(c)$. But by Theorem~\ref{ac-properties}\ref{restr}, this means that $h|\tau_0 \in AC(\tau_0)$ and $h^{-1}|\sigma_0 \in AC(\sigma_0)$. It is an easy consequence of \cite[Theorem~3.11]{AS3} that $\Phi_0$ is a Banach algebra isomorphism from $BV(\sigma_0)$ to $BV(\tau_0)$. Applying \cite[Theorem~4.7]{AS3} again then, we can deduce that $\Phi_0$ acts as an isomorphism from $AC(\sigma_0)$ to $AC(\tau_0)$.
\end{proof} 

\begin{theorem}\label{conv-curve-extend}
Suppose that $c$ is a projectable convex curve, and that $\emptyset \ne \sigma_0 \subseteq c$. If $f \in AC(\sigma_0)$ then there exists an extension $\hat f \in AC(c)$ with $\ssnorm{\hat f}_{BV(c)} \le 2\norm{f}_{BV(\sigma_0)}$
\end{theorem}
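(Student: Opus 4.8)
The plan is to transport the problem to the real line using the parameterization of $c$, extend there, and transport back. Fix a differentiable parameterization $h:[0,1]\to c$ as in Theorem~\ref{c-isom-01}, and set $\tau_0 = h^{-1}(\sigma_0)\subseteq[0,1]$. Given $f\in AC(\sigma_0)$, Lemma~\ref{subsets-of-convex} tells us that $g:=f\circ h=\Phi_0(f)$ lies in $AC(\tau_0)$. Since $\tau_0$ is a compact subset of $[0,1]\subseteq\mR$, Theorem~\ref{real-ext} produces an extension $\tilde g\in AC[0,1]$ with $\snorm{\tilde g}_{BV[0,1]}=\norm{g}_{BV(\tau_0)}$. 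Finally I would set $\hat f:=\tilde g\circ h^{-1}=\Phi_h^{-1}(\tilde g)$, which lies in $AC(c)$ because $\Phi_h^{-1}$ maps $AC[0,1]$ into $AC(c)$.

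Two things then need checking: that $\hat f$ really restricts to $f$ on $\sigma_0$, and that the norm bound holds. For the first, if $\vecx\in\sigma_0$ then $t:=h^{-1}(\vecx)\in\tau_0$, so $\hat f(\vecx)=\tilde g(t)=g(t)=f(h(t))=f(\vecx)$, using that $\tilde g$ agrees with $g$ on $\tau_0$. For the norm, chaining the three maps gives
\[ \ssnorm{\hat f}_{BV(c)} = \snorm{\Phi_h^{-1}(\tilde g)}_{BV(c)} \le \snorm{\Phi_h^{-1}}\,\snorm{\tilde g}_{BV[0,1]} = \norm{g}_{BV(\tau_0)} = \norm{\Phi_0(f)}_{BV(\tau_0)}, \]
using $\snorm{\Phi_h^{-1}}=1$ from Theorem~\ref{c-isom-01} and the isometric extension from Theorem~\ref{real-ext}.

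It remains to show $\norm{\Phi_0(f)}_{BV(\tau_0)}\le 2\norm{f}_{BV(\sigma_0)}$, that is, $\snorm{\Phi_0}\le 2$; this is the one genuinely quantitative step and I expect it to be the main obstacle, since Lemma~\ref{subsets-of-convex} asserts only that $\Phi_0$ is an isomorphism without supplying a norm bound. Because $h$ is a bijection of $\tau_0$ onto $\sigma_0$, the supremum parts of the two norms coincide, so everything reduces to comparing variations. For $\tau_0\subseteq\mR$ the quantity $\var(g,\tau_0)$ is the classical variation, a supremum of $\sum_k\abs{g(t_k)-g(t_{k-1})}$ over increasing lists $t_0<\dots<t_n$ in $\tau_0$. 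Writing $\vecx_k=h(t_k)$, these points lie in $\sigma_0$ in their natural order along the convex curve $c$, so the polygonal path joining them is a convex polygonal arc, which any line meets in at most two connected pieces; hence the list $S=[\vecx_0,\dots,\vecx_n]$ has $\vf(S)\le 2$. Consequently
\[ \sum_{k}\abs{f(\vecx_k)-f(\vecx_{k-1})} = \cvar(f,S) \le \vf(S)\,\var(f,\sigma_0) \le 2\,\var(f,\sigma_0), \]
and taking the supremum over all such lists gives $\var(\Phi_0(f),\tau_0)\le 2\var(f,\sigma_0)$. Combining this with the equality of the sup norms yields $\norm{\Phi_0(f)}_{BV(\tau_0)}\le 2\norm{f}_{BV(\sigma_0)}$, which completes the estimate. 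The crux is thus the convexity bound $\vf(S)\le 2$; it is the same mechanism that underlies $\norm{\Phi_h}\le 2$ in Theorem~\ref{c-isom-01} (equivalently, Section~8 of \cite{AS2}), now applied to the subset $\sigma_0$ rather than to all of $c$.
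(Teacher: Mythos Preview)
Your argument is correct and follows the same route as the paper's proof: push forward to $\tau_0\subseteq[0,1]$ via $\Phi_0$, extend isometrically on the line, and pull back via $\Phi_h^{-1}$. The paper's proof simply asserts that ``the norm bounds follow from the properties of $\Phi$'', whereas you have (correctly) isolated the one nontrivial ingredient, namely $\snorm{\Phi_0}\le 2$, and supplied the convexity argument ($\vf(S)\le 2$ for points taken in order along $c$) that justifies it --- exactly the mechanism behind $\snorm{\Phi_h}\le 2$ in Theorem~\ref{c-isom-01}, applied now on the subset $\sigma_0$.
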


\begin{proof}
Suppose that $f \in AC(\sigma_0)$. Let $\Phi_0$ and $h$ be as in Lemma~\ref{subsets-of-convex}. Let $g = \Phi_0(f) \in AC(h^{-1}(\sigma_0))$. Lemma~\ref{iota-lemma} allows us to extend $g$ to ${\hat g} \in AC[0,1]$, and so ${\hat f} = \Phi_h^{-1}(\hat g) \in AC(c)$. The norm bounds follow from the properties of $\Phi$.
\end{proof}

One can extend Theorem~\ref{real-ext} to cover the case where $\sigma_0$ is a nonempty compact subset of a projectable curve. For the moment however, we shall just give a special case which we shall need in the later sections. In this result, the curve $c$ need not be projectable.

\begin{lemma}\label{from-c-to-T}
Suppose that $T$ is a triangle in $\mR^2$ with vertices at $\vecx$, $\vecy$ and $\vecz$. Let $c$ be a differentiable convex curve joining $\vecx$ to $\vecy$. Then every $f \in AC(c)$ admits an extension $\hat f \in AC(T)$ with $\ssnorm{\hat f}_{BV(T)} \le 2 \norm{f}_{BV(c)}$. 
\end{lemma}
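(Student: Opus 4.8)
The plan is to extend $f$ across $T$ by sweeping the curve $c$ along the rays issuing from the opposite vertex $\vecz$, while fading the values linearly to a constant as the rays approach $\vecz$, so that continuity survives at the apex. The first step is the geometric observation that $c$ is star-shaped with respect to $\vecz$. Since $c$ is convex, lies in $T$, and meets $\partial T$ only at $\vecx$ and $\vecy$, the region $D$ bounded by $c$ and the chord $\ls[\vecx,\vecy]$ is convex and does not contain $\vecz$; moreover the lines through $\vecz\vecx$ and $\vecz\vecy$ (which carry two sides of $T$) are supporting lines of $D$. Consequently $c$ is exactly the arc of $\partial D$ visible from $\vecz$, the angle subtended at $\vecz$ is monotonic along $c$, and every ray from $\vecz$ into $T$ meets $c$ in exactly one point. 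This yields a continuous radial retraction $R\colon T\setminus\{\vecz\}\to c$ together with a scalar $\lambda(\vecw)=\abs{\vecw-\vecz}/\abs{R(\vecw)-\vecz}$, which equals $1$ on $c$, is less than $1$ between $\vecz$ and $c$, and exceeds $1$ between $c$ and the chord.

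I would then define the extension by the single formula $\hat f(\vecw)=\min(\lambda(\vecw),1)\,f(R(\vecw))$ for $\vecw\neq\vecz$, with $\hat f(\vecz)=0$. On $c$ this returns $f$; in the region between $c$ and the chord it is constant along each ray; and toward $\vecz$ it scales linearly to $0$. Continuity at the apex is immediate since $\abs{\hat f(\vecw)}\le\lambda(\vecw)\norminf{f}\to 0$, and continuity across $c$ holds because both the fade factor and $f\circ R$ are continuous there.

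The heart of the argument is showing $\hat f\in AC(T)$, which I would obtain from the Patching Lemma (Theorem~\ref{patching-lemma}). At an interior point of either region, and at a point of $c$ other than $\vecx,\vecy$, a small compact neighbourhood carries a product-type structure in the coordinates $(\text{position along }c,\ \text{radial position})$: in the curve direction $\hat f$ agrees locally with an absolutely continuous function on a convex-curve piece (Theorem~\ref{conv-curve-extend}), in the radial direction it is a $\min(\lambda,1)$-multiple that is constant or linear and hence absolutely continuous via affine invariance and Theorem~\ref{real-ext}, and their product lies in $AC$ because $AC$ is a Banach algebra. The main obstacle is the behaviour at the apex $\vecz$ and at the two endpoints $\vecx,\vecy$, where the radial coordinates degenerate and the above product structure breaks down. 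There I would argue directly on a small triangular neighbourhood, using the linear fade factor $\min(\lambda,1)$ to absorb the coordinate singularity and to keep $\cvar(\hat f,S)$ under control for every admissible list $S$ meeting the neighbourhood.

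Finally, for the norm estimate I would bound the two parts of $\norm{\hat f}_{BV(T)}$ separately: $\norminf{\hat f}\le\norminf{f}$ because $0\le\min(\lambda,1)\le 1$, and $\var(\hat f,T)$ would be bounded by $\var(f,c)$ together with a term of size $\norminf{f}$ coming from the fade along the rays. Assembling these gives $\ssnorm{\hat f}_{BV(T)}\le 2\norm{f}_{BV(c)}$, with no attempt made to optimise the constant.
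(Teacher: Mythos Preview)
Your construction is geometrically natural, but the proof that $\hat f\in AC(T)$ has a real gap. The available tools for producing $AC$ functions on plane sets are affine invariance (Theorem~\ref{ac-properties}\ref{aff-inv}), extension by constants on \emph{parallel} lines (Theorem~\ref{ac-properties}\ref{r-ext}), and the isomorphism $AC(c)\cong AC[0,1]$ for projectable curves (Theorem~\ref{c-isom-01}). Your function $f\circ R$ is constant on the pencil of rays through $\vecz$, not on a family of parallel lines, and no affine map straightens that pencil. So the ``product-type structure'' you invoke does not reduce to anything the Patching Lemma can consume: on any compact neighbourhood $U$, $f\circ R$ is $F\circ\theta$ with $\theta$ the angular coordinate about $\vecz$, and you would need a composition result of the form ``$AC$ after smooth scalar is $AC$'' which is neither available nor true in general. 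Even in the toy case where $c$ is the segment $\ls[\vecx,\vecy]$ and $f(s,0)=s^2$, your extension contains the term $(x-\tfrac12)^2/(1-y)$, which is Lipschitz on $T$ but not $C^1$-extendable at $\vecz$; Lipschitz alone does not imply $AC$ in the plane (\cite[Example~3.13]{DLS1}), so membership in $AC(T)$ is genuinely in doubt and would need its own argument. The variation estimate has the same problem: bounding $\cvar(\hat f,S)$ forces you to compare $\vf(S)$ with $\vf(RS)$ for the radially projected list $RS$, and you give no reason why the non-affine map $R$ should control variation factors.

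The paper avoids all of this by choosing a different direction to sweep. An affine map sends $\vecx,\vecy,\vecz$ to $(0,0),(1,0),(\tfrac12,1)$; the image $c'$ is then projectable onto $[0,1]$, so Theorem~\ref{c-isom-01} gives $g\in AC[0,1]$ with $\|g\|\le 2\|f\|_{BV(c)}$, and $\hat g(x,y)=g(x)$ is in $AC(T')$ by Theorem~\ref{ac-properties}\ref{r-ext} with no increase in norm. Pulling back affinely yields the bound $2$ immediately. The point is that parallel sweeping, unlike radial sweeping, lands squarely on the existing machinery.
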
 

\begin{proof}
Let $\vecx' = (0,0)$, $\vecy' = (1,0)$ and $\vecz' = (\frac{1}{2},1)$ and let $h$ be the affine transformation which maps $\vecx$ to $\vecx'$, $\vecy$ to $\vecy'$ and $\vecz$ to $\vecz'$. Let $T' = h(T)$ and let $c' = h(c)$.  Then $c'$ is a projectable convex curve which we may assume is the graph of the convex function $k:[0,1] \to \mR$. As above, we may define $g \in AC[0,1]$ by $g(t) = f(h^{-1}(t,k(t)))$, and then setting ${\hat g}(x,y) = g(x)$  gives an extension of $g$ to all of $T'$. Now setting $\hatf = {\hat g} \circ h$ gives an absolutely continuous extension of $f$ to $T$. The bounds follow from Theorem~\ref{ac-properties}\ref{aff-inv} and Theorem~\ref{real-ext}.
\end{proof}

%%%%%%%%%%%%%%%%%%%%%%%%%%%%%%%%%%%%
%%%%%%%%%%%%%%%%%%%%%%%%%%%%%%%%%%%%

\section{Filling in polygons}\label{S:fill-poly}

The other main building block of the sets to which the main theorem will apply are polygonal regions of the plane. The first step is to show that one may always extend an absolutely continuous function defined on the boundary of such a set into its interior. 

In the case where the polygonal region is the unit square $S = [0,1] \times [0,1]$, one can write an explicit formula for an extension. This result, with a slightly different bound, is \cite[Theorem~3.5]{AD3}.

\begin{theorem}\label{extend-S} Suppose that $\sigma_0$ is the boundary of the unit square $\sigma = S$ and that $f \in AC(\sigma_0)$. Then $f$ admits an extension $\hatf \in AC(\sigma)$ with
  \[ \ssnorm{\hatf}_{BV(\sigma)} \le 10 \norm{f}_{AC(\sigma_0)}. \]
\end{theorem}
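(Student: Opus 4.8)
The plan is to exhibit an explicit bounded extension operator, given by transfinite (Coons-patch) interpolation of the four edge functions, and then to estimate its norm term by term. Write $b(x)=f(x,0)$, $t(x)=f(x,1)$, $l(y)=f(0,y)$ and $r(y)=f(1,y)$ for the restrictions of $f$ to the four sides, viewed as functions on $[0,1]$, and set $\alpha=f(0,0)$, $\beta=f(1,0)$, $\gamma=f(0,1)$, $\delta=f(1,1)$. The proposed extension is
\[ \hatf(x,y) = (1-y)b(x)+y\,t(x)+(1-x)l(y)+x\,r(y) - \bigl[(1-x)(1-y)\alpha+x(1-y)\beta+(1-x)y\,\gamma+xy\,\delta\bigr]. \]
The first four terms blend the edge data linearly in each direction, while the final bilinear bracket interpolates the corner values; its role is to cancel the spurious corner contributions of the blending terms.

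First I would verify that $\hatf$ restricts to $f$ on $\sigma_0=\partial S$. Setting $y=0$, the terms carrying a factor $y$ drop out and the bracket collapses to $(1-x)\alpha+x\beta=(1-x)l(0)+x\,r(0)$, which exactly cancels the $(1-x)l(y)+x\,r(y)$ contribution evaluated at $y=0$, leaving $b(x)=f(x,0)$; the other three sides are handled identically using $b(0)=l(0)=\alpha$, and so on. Next, $\hatf\in AC(S)$: the coordinate functions $1-x,x,1-y,y$ and their products are polynomials, hence lie in $AC(S)$, while each edge function, say $b$, belongs to $AC[0,1]$ by Theorem~\ref{ac-properties}\ref{restr}, so its constant-in-$y$ extension $(x,y)\mapsto b(x)$ lies in $AC(S)$ by Theorem~\ref{ac-properties}\ref{r-ext}. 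Since $AC(S)$ is a subalgebra of the Banach algebra $BV(S)$ (Theorem~\ref{bv-properties}), every sum and product appearing in $\hatf$ is again in $AC(S)$.

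The remaining, and principal, task is the norm bound. Each edge function has $BV[0,1]$-norm at most $\norm{f}_{AC(\sigma_0)}$ (Theorem~\ref{ac-properties}\ref{restr}), and hence so does the $BV(S)$-norm of its constant-transverse extension (Theorem~\ref{ac-properties}\ref{r-ext}); each corner value is at most $\norm{f}_\infty\le\norm{f}_{AC(\sigma_0)}$; and each corner difference such as $\abs{\beta-\alpha}$ is bounded by the variation of $f$ along a single side, again $\le\norm{f}_{AC(\sigma_0)}$. Combining these with the exact values $\norm{x}_{BV(S)}=\norm{1-x}_{BV(S)}=\norm{y}_{BV(S)}=\norm{1-y}_{BV(S)}=2$ (sup-norm $1$ plus variation $1$) and submultiplicativity of the $BV(S)$-norm lets one estimate each summand of $\hatf$ and add.

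The main obstacle is keeping the resulting constant as small as $10$. A naive factor-by-factor use of submultiplicativity on every product—in particular on the bilinear correction, where one would only get $\norm{xy}_{BV(S)}\le 4$—produces a substantially larger constant. To sharpen this I would estimate the blended pieces $(1-y)b(x)+y\,t(x)$ and the correction term directly as functions on $S$, rather than splitting every product, and exploit that after subtracting the bilinear interpolant the relevant edge residuals vanish at both endpoints, so their suprema are controlled by half their variation. This direct variation bookkeeping is the delicate, computational heart of the argument; by contrast the boundary identity and the absolute continuity of $\hatf$ are routine consequences of the properties recorded above.
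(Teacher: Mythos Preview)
Your extension formula is exactly the paper's: expanding the Coons patch and collecting terms gives precisely formula~(\ref{fhat1}) (and its regrouping~(\ref{fhat2})), and your verification that $\hatf\mid\sigma_0=f$ and $\hatf\in AC(S)$ matches the paper's reasoning.

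The divergence is entirely in how the constant $10$ is obtained. The paper does \emph{not} work with the full $BV$-norm and submultiplicativity; instead it separates $\ssnorm{\hatf}_\infty$ and $\var(\hatf,\sigma)$ and bounds each against different groupings of the formula. From~(\ref{fhat2}) one reads off $\ssnorm{\hatf}_\infty\le 8\norm{f}_\infty$ immediately. For the variation, the paper uses the grouping~(\ref{fhat1}) together with two outside ingredients: (i) the product estimates of \cite[Lemma~2.7, Theorem~2.16]{DLS1}, which give exact bounds such as $\var(r(y)x,\sigma)\le\norm{r}_{BV[0,1]}$ and $\var((b(0)-b(1))x,\sigma)\le\var(b,[0,1])$; and, crucially, (ii) the inequality
\[
  \norm{b}_{BV[0,1]}+\norm{t}_{BV[0,1]}+\norm{\ell}_{BV[0,1]}+\norm{r}_{BV[0,1]}\le 2\norm{f}_{BV(\sigma_0)}
\]
from the proof of \cite[Theorem~3.4]{AD3}. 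Your outline uses only the individual bound $\norm{b}_{BV[0,1]}\le\norm{f}_{BV(\sigma_0)}$ for each side, which gives $4\norm{f}$ for the sum rather than $2\norm{f}$; this factor-of-two loss is what makes the constant $10$ out of reach by the route you sketch. Your proposed remedy---passing to the endpoint-vanishing residuals and using $\norm{\tilde b}_\infty\le\tfrac12\var(\tilde b)$---is a reasonable idea, but you have not carried out the computation, and even with that refinement you will still need control of $\sum_{\text{sides}}\var(f|_{\text{side}})$ in terms of $\var(f,\sigma_0)$, which is essentially the content of the \cite{AD3} lemma you are missing. So the gap is not in the construction but in the bookkeeping: to reach $10$ you need the two cited inputs (or equivalents), and the ``direct variation bookkeeping'' you defer is precisely where they enter.
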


\newcommand{\rspace}{\hphantom{b(x)b(x)+\ell(y) -b(0}}

\begin{proof} The restrictions of $f$ to the four sides of $S$ are absolutely continuous:
  \[ b(x) = f(x,0), \qquad t(x) = f(x,1), \qquad \ell(y) = f(0,y), \qquad r(y) = f(1,y). \]
For $(x,y) \in S$ let
%  \[ f(x,y) = b(x) - b(x) y + t(x)y  + \ell(y) - \ell(y)x - b(0) + b(0)x + b(0)y - b(0)xy - t(0)y + t(0)xy + r(y)x -b(1)x + b(1)xy - t(1)xy. \]
  \begin{align}\label{fhat1}
    \hatf(x,y) &= b(x)+\ell(y) -b(0) +(b(0)-b(1)+r(y)-\ell(y))x   \notag \\
                 & \rspace + (\ell(0)-\ell(1)+t(x)-b(x))y \notag\\
                 & \rspace + (b(1)-b(0)+t(0)-t(1))xy .
  \end{align}
It is readily verified that $\hatf$ is indeed an extension of $f$. Further, using Theorem~\ref{real-ext}, it is clear that $\hatf \in AC(\sigma)$. 

Note that $\hatf$ can also be written as
  \begin{align}\label{fhat2}
    \hatf(x,y) &= b(0) (x-1)(1-y) + b(1) x (1-y) + t(0) (x-1)y + t(1) xy \notag\\
                 &\rspace + b(x)(1-y) + \ell(y)(1-x)+r(y) x + t(x) y
  \end{align}
and so $\ssnorm{\hatf}_\infty \le 8 \norm{f}_\infty$. 
Using Lemma~2.7 and Theorem 2.16 from \cite{DLS1},
\begin{align*}
 \var((b(0)-b(1))x,\sigma) & \le \var(b,[0,1]) \var(x,\sigma) = \var(b,[0,1]) \\
 \var(r(y)x,\sigma) & \le \var(r(y),\sigma) \norm{x}_\infty + \norm{r}_\infty \var(x,\sigma) = \norm{r}_{BV[0,1]} 
\end{align*}
while
\begin{align*}
 \var((b(1)-b(0) &+t(0)-t(1))xy,\sigma) \le (\var(b,[0,1])+\var(t,[0,1]) \var(xy,\sigma) \\
            & \le (\var(b,[0,1])+\var(t,[0,1])
                (\norm{x}_\infty \var(y,\sigma) + \var(x,\sigma) \norm{y}_\infty \\
            &= 2(\var(b,[0,1])+\var(t,[0,1]).
\end{align*}
Using similar estimates for the other terms in (\ref{fhat1}), this shows that
  \begin{align*}
   \var(\hatf,\sigma) & \le \var(b,[0,1]) + \var(\ell,[0,1]) + \norm{r}_{BV[0,1]} 
                           + \norm{\ell}_{BV[0,1]} \\
                     & \qquad + \var(\ell,[0,1]) + \norm{t}_{BV[0,1]} 
                           + \norm{b}_{BV[0,1]} \\
                     & \qquad + 2(\var(b,[0,1])+\var(t,[0,1]) \\
                     & \le 4 \var(f,\sigma_0)  + \norm{b}_{BV[0,1]} + \norm{t}_{BV[0,1]} 
                         + \norm{\ell}_{BV[0,1]} + \norm{r}_{BV[0,1]}.
  \end{align*}

In the proof of Theorem 3.4 of \cite{AD3} it was shown that
  \[ \norm{b}_{BV[0,1]} + \norm{t}_{BV[0,1]} + \norm{\ell}_{BV[0,1]} + \norm{r}_{BV[0,1]}  \le 2 \norm{f}_{BV(\sigma_0)}. \]
  
Combining all of this gives
  \begin{align*} \ssnorm{\hatf}_{BV(\sigma)}
     &= \ssnorm{\hatf}_\infty + \var(\hatf,\sigma) \\
     &\le 8 \norm{f}_\infty + 4 \var(f,\sigma_0) + 2 \norm{f}_{BV(\sigma_0)}
     \le 10 \norm{f}_{BV(\sigma_0)}.  \qedhere
  \end{align*}
\end{proof}

The second author \cite{St} obtained a slightly better bound for an extension from the boundary of a triangle to its interior. As the construction is a little more complicated than the one for a square, we just record the result here.

\begin{theorem} \cite[Theorem~6.3.6]{St} Suppose that $\sigma_0$ is the boundary of a triangular region $\sigma$ in the plane. If $f \in AC(\sigma_0)$ then $f$ admits an extension $\hatf \in AC(\sigma)$ with
  \[ \ssnorm{\hatf}_{BV(\sigma)} \le 7 \norm{f}_{AC(\sigma_0)}. \]
\end{theorem}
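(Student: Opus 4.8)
The plan is to reduce to a reference triangle by affine invariance and then build an explicit ``transfinite'' extension from the three edge functions, estimating its $\BV$ norm term by term exactly as in the proof of Theorem~\ref{extend-S}. By Theorem~\ref{ac-properties}\ref{aff-inv} the map $f \mapsto f\circ\phi^{-1}$ is an isometric isomorphism, so I may assume $\sigma = T$ is the standard triangle with vertices $v_1=(0,0)$, $v_2=(1,0)$, $v_3=(0,1)$; this costs nothing in the constant. Writing $\lambda_1=1-x-y$, $\lambda_2=x$, $\lambda_3=y$ for the (affine, hence polynomial) barycentric coordinates, I would first subtract off the affine interpolant $L$ of the three vertex values $f(v_i)$. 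Since $L$ is affine with $\norm{L}_\infty\le\norm{f}_\infty$ and controlled variation, $\norm{L}_{\BV(T)}\le c_0\norm{f}_{\BV(\sigma_0)}$, and it suffices to extend $g=f|_{\sigma_0}-L|_{\sigma_0}$, which now vanishes at all three vertices, and then add $L$ back.

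For each edge $e_i$ (the edge opposite $v_i$, where $\lambda_i=0$) let $g_i=g|_{e_i}$; this is an $\AC$ function of one real variable vanishing at both endpoints. The heart of the construction is to extend $g_i$ to a function $G_i\in\AC(T)$ that reproduces $g_i$ on $e_i$ and vanishes on the other two edges, and then set $\hatf=G_1+G_2+G_3+L$. The natural candidate is the radial extension from the opposite vertex tempered by a factor that kills the apex: for $e_3$, parametrizing the ray from $v_3$ through $(x,y)$ down to the base $y=0$, one takes $G_3(x,y)=(1-y)\,g_3\!\left(\tfrac{x}{1-y}\right)$. A direct check shows $G_3$ equals $g_3$ on $e_3$ and vanishes on $e_1$ and $e_2$ (there the ray meets $e_3$ at a vertex, where $g_3=0$), and that $G_3$ extends continuously by $0$ to $v_3$. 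In the sheared coordinates $(s,v)=(x/(1-y),y)$ one has $G_3=(1-v)\,g_3(s)$, a product of a one-variable $\AC$ function of $s$ with the affine function $1-v$; away from the apex this exhibits $G_3$ as an $\AC$ function (via Theorem~\ref{real-ext} and the fact that $\BV(T)$ is a Banach algebra, Theorem~\ref{bv-properties}). Matching on the edges then gives $\hatf|_{\sigma_0}=f$, so that membership $\hatf\in\AC(T)$ follows once each $G_i$ is known to be absolutely continuous. Alternatively one can assemble $\hatf$ as a Boolean sum of the three single-edge extensions furnished by Lemma~\ref{from-c-to-T} minus a low-order correction, in direct analogy with the Coons-patch formula~(\ref{fhat2}).

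The main obstacle is precisely the behaviour of $G_i$ at the opposite vertex, where the radial parametrization degenerates: the sheared change of variables collapses an entire edge of the square to the apex, so composition arguments do not immediately transfer absolute continuity there. I would settle this using the Patching Lemma (Theorem~\ref{patching-lemma}), proving local absolute continuity of $G_3$ on a compact neighbourhood $T\cap\overline{V}$ of $v_3$ by bounding $\var(G_3, T\cap\overline{V})$ directly; the key point making this finite is that $g_3$ vanishes at both endpoints, so the oscillation of $G_3$ along small cevian arcs near $v_3$ is controlled by $\norm{g_3}_\infty$ times the vanishing factor. Once each bound $\norm{G_i}_{\BV(T)}\le c_1\norm{g_i}_{\BV}$ is established, the remaining estimate is routine bookkeeping: using the Banach-algebra variation inequality and the boundary bound $\sum_i\norm{g_i}_{\BV}\le c_2\norm{g}_{\BV(\sigma_0)}$ (the triangle analogue of the estimate quoted from \cite{AD3} in the proof of Theorem~\ref{extend-S}), one collects the contributions of $G_1,G_2,G_3$ and $L$ and optimizes the constants to reach the stated bound $7\norm{f}_{\AC(\sigma_0)}$.
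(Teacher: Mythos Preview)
The paper does not actually prove this result; it only records it, citing the second author's thesis and remarking that ``the construction is a little more complicated than the one for a square''. So there is no in-paper proof to compare against, and your proposal must stand on its own.

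The central gap is your justification that the radial extension $G_3(x,y)=(1-y)\,g_3\bigl(x/(1-y)\bigr)$ lies in $\AC(T)$. You appeal to Theorem~\ref{real-ext} in the sheared coordinates $(s,v)=(x/(1-y),y)$, but Theorem~\ref{real-ext} (and its companion Theorem~\ref{ac-properties}\ref{r-ext}) only tells you that $(x,y)\mapsto g_3(x)$ is absolutely continuous; it says nothing about $(x,y)\mapsto g_3\bigl(x/(1-y)\bigr)$. The shear is not affine, and none of the isomorphism results available in the paper (all of which go through affine or locally piecewise affine maps, cf.\ Theorem~\ref{bv-properties}(iv), Theorem~\ref{isom-poly}) license composition with a projective map that blows up at the apex. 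Indeed, already for $g_3(s)=s(1-s)$ one gets $G_3(x,y)=x(1-x-y)/(1-y)$, which is not a polynomial and not visibly a product of one-variable $\AC$ functions in the $(x,y)$ coordinates; proving it is in $\AC(T)$ requires a genuine approximation argument you have not supplied. Your fallback, invoking the Patching Lemma at $v_3$ ``by bounding $\var(G_3, T\cap\overline{V})$ directly'', conflates $\BV$ with $\AC$: a variation bound places $G_3$ in $\BV$ on the neighbourhood, but the Patching Lemma needs $G_3|U_{v_3}\in\AC(U_{v_3})$, i.e.\ approximability by polynomials in the $\BV$ norm there, which is exactly the point at issue.

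Finally, even if the $\AC$ membership were established, you never track the constants: ``optimize the constants to reach the stated bound $7$'' is not an argument, and your outline gives no indication that the accumulated factors from $L$, the three $G_i$, and the boundary estimate actually sum to $7$ rather than some larger number.
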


It is likely that the constants in both cases for the bounds on the extensions are not sharp, even for the explicit extensions constructed. It is known that the construction in the proof of Theorem~\ref{extend-S} may produce an extension with a larger norm than the initial function.

\begin{example}
Consider the following example of the construction from Theorem~\ref{extend-S}. If $b(x) = t(x) = x(1-x)$ and $\ell(y) = r(y) = y(1-y)$, then $\hatf(x,y) = x(1-x)+y(1-y)$. It is easy to check that 
$\ssnorm{\hatf}_{BV(\sigma)} = 6$. Now, as in the proof of Section~2.6 of \cite{DLS1}, $\var(f,\sigma_0)$ is at most twice the parameterized variation of $f$ around the edges of the square, so 
$\norm{f}_{BV(\sigma_0)} \le 1 + 2 = 3$. 
\end{example}

Using the results of \cite{DL} (see Theorem 6.3), one can obtain extensions to the region bounded by any polygon.

\begin{theorem}\label{isom-poly} Suppose that $\sigma$ is a polygonal region in $\mR^2$ with boundary $\sigma_0$. Then there exists a homeomorphism $h: \mR^2 \to \mR^2$, made up of the composition of locally piecewise affine maps, such that
  \begin{itemize}\closeup
  \item $h(\sigma)$ is the square $S = [0,1] \times [0,1]$,
  \item $AC(\sigma_0)$ is isomorphic to $AC(\partial S)$,
  \item $AC(\sigma)$ is isomorphic to $AC(S)$.
  \end{itemize}
In each case the algebra isomorphism is given by $\Phi(g) = g \circ h^{-1}$.
\end{theorem}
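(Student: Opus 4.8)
The plan is to reduce the polygonal case to the square case already handled in Theorem~\ref{extend-S}, by constructing an explicit homeomorphism $h$ that straightens out the polygon into the unit square while preserving enough structure that it induces isomorphisms on the relevant $BV$ and $AC$ algebras. The crucial input is the cited result from \cite{DL} (Theorem~6.3), which presumably produces a piecewise affine (or locally piecewise affine) homeomorphism of the plane taking one polygon to another. First I would triangulate both the source polygon $\sigma$ and the target square $S$ compatibly, so that there is a bijection between the triangles of a triangulation of $\sigma$ and those of a triangulation of $S$; on each triangle the map $h$ is the unique affine map matching the three vertices, and one checks these piece together continuously across shared edges because affine maps agreeing on two points agree on the whole edge.

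The key algebraic step is to verify that a map of the form $\Phi(g) = g \circ h^{-1}$ is a Banach algebra isomorphism at each of the three levels (boundary, square-level, and region-level). Here the essential tool is that variation is well behaved under piecewise affine homeomorphisms: on each affine piece, $h$ is an invertible affine transformation, and by Theorem~\ref{bv-properties}(iv) and Theorem~\ref{ac-properties}\ref{aff-inv} such a map induces an isometric isomorphism of the $BV$ and $AC$ algebras of the corresponding pieces. To globalize this I would invoke the Patching Lemma (Theorem~\ref{patching-lemma}): absolute continuity is a local property, so showing $g \circ h^{-1}$ is locally in $AC$ near each point of $h(\sigma)$ — which follows from the local affine structure of $h$ — gives membership in $AC(h(\sigma))$. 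The interior and boundary cases are handled simultaneously since $h$ restricts to a homeomorphism $\sigma_0 \to \partial S$ and $\sigma \to S$.

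The main obstacle I anticipate is controlling the variation across the finitely many edges where the affine pieces of $h$ meet, and more subtly ensuring that the construction of \cite{DL} genuinely yields a homeomorphism of the whole plane (not merely of the polygon) whose local pieces are affine, so that the variation factor $\vf$ is uniformly controlled under the map. Since $\vf$ is defined via how paths cross lines, and $h$ maps lines to piecewise linear paths rather than to lines, one must check that the number of crossings can only increase by a bounded factor depending on the number of affine pieces; this is exactly the kind of estimate underlying \cite[Theorem~3.11]{AS3} used in Lemma~\ref{subsets-of-convex}, and I would lean on the results of \cite{AS3} to conclude that a locally piecewise affine homeomorphism induces a bounded isomorphism on $BV$ and hence, by the density of polynomials, on $AC$. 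Once these bounded isomorphisms are established, the three isomorphism claims follow by restricting $\Phi$ to the appropriate subalgebras, and composing with the extension map of Theorem~\ref{extend-S} then yields the desired extension from $\sigma_0$ into $\sigma$.
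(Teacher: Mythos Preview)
The paper does not actually prove this theorem: it is stated as a consequence of the results in \cite{DL} (the parenthetical ``see Theorem~6.3'' refers to that paper), so your task is really to reconstruct what \cite{DL} does. Your sketch is broadly on the right track---locally piecewise affine homeomorphisms do induce bounded isomorphisms on $BV$ and $AC$, and the Patching Lemma is the right tool for the local-to-global step on the $AC$ side---but the construction you propose for $h$ is not the one \cite{DL} uses, and your version has a gap.

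Your plan is to triangulate $\sigma$ and $S$ with the same number of triangles and map triangle to triangle affinely. For this to define even a homeomorphism of $\sigma$ onto $S$, the two triangulations must be combinatorially isomorphic (adjacent triangles go to adjacent triangles, shared edges to shared edges with matching endpoints); you assert ``one checks these piece together'' but do not say how to arrange this, and for an arbitrary simple polygon versus a square it is not automatic. More seriously, the theorem asks for a homeomorphism of all of $\mR^2$, and a triangulation of $\sigma$ gives you nothing outside $\sigma$. The \cite{DL} construction avoids both issues by building $h$ as a \emph{composition of half-plane affine maps}---homeomorphisms of the whole plane that are affine on each side of a line (these appear later in the present paper, just before Theorem~\ref{join-sectors}). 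Each such map is a global homeomorphism by construction, and one can successively move vertices of the polygon until it becomes the square; the relevant norm control is exactly \cite[Theorem~4.5]{DL} (invoked in the proof of Theorem~\ref{join-sectors}), which gives a factor of~$2$ per half-plane affine map, rather than the results of \cite{AS3} you cite. Once you switch to this construction, the rest of your argument---affine invariance on pieces, Patching Lemma for $AC$, density of polynomials---goes through cleanly.
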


\begin{theorem}\label{fill-poly}
Suppose that $\sigma$ is a polygonal region in $\mR^2$ with boundary $\sigma_0$, and that $f \in AC(\sigma_0)$. Then $f$ admits an extension $\hatf \in AC(\sigma)$ with
  \[ \ssnorm{\hatf}_{BV(\sigma)} \le K_\sigma \norm{f}_{BV(\sigma_0)} \]
where $K_\sigma$ only depends on $\sigma$. 
\end{theorem}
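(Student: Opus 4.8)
The plan is to transport the problem to the unit square using the homeomorphism supplied by Theorem~\ref{isom-poly}, solve the extension problem there with the explicit formula of Theorem~\ref{extend-S}, and then pull the result back to $\sigma$. The whole argument is a transport-of-structure: once we know that the boundary and the filled region are simultaneously modelled on $\partial S$ and $S$ by one and the same map, the square case does all the work.

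First I would invoke Theorem~\ref{isom-poly} to fix a homeomorphism $h:\mR^2 \to \mR^2$ with $h(\sigma) = S$ and $h(\sigma_0) = \partial S$, together with the two associated Banach algebra isomorphisms $\Phi_0: AC(\sigma_0) \to AC(\partial S)$ and $\Phi_1: AC(\sigma) \to AC(S)$, both given by $g \mapsto g \circ h^{-1}$. Because each is an isomorphism of Banach algebras, $\Phi_0^{-1}$ and $\Phi_1^{-1}$ are bounded; set $C_0 = \norm{\Phi_0}$ and $C_1 = \norm{\Phi_1^{-1}}$. These constants are finite and depend only on $h$, and hence only on $\sigma$. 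Given $f \in AC(\sigma_0)$, I would then let $g = \Phi_0(f) = f \circ h^{-1} \in AC(\partial S)$, so that $\norm{g}_{BV(\partial S)} \le C_0 \norm{f}_{BV(\sigma_0)}$, apply Theorem~\ref{extend-S} to produce an extension $\hat g \in AC(S)$ of $g$ with $\ssnorm{\hat g}_{BV(S)} \le 10\,\norm{g}_{BV(\partial S)}$ (using that the $AC$ and $BV$ norms agree on $AC$), and finally define $\hatf = \Phi_1^{-1}(\hat g) = \hat g \circ h \in AC(\sigma)$.

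It then remains to check that $\hatf$ is genuinely an extension of $f$ and to chain the norm estimates. The key structural observation is that $\Phi_0$ and $\Phi_1$ are restrictions of the single map $g \mapsto g \circ h^{-1}$, so that restriction to the boundary commutes with the transport. Concretely, for $\vecx \in \sigma_0$ we have $h(\vecx) \in \partial S$, where $\hat g$ agrees with $g$, and therefore
\[
  \hatf(\vecx) = \hat g(h(\vecx)) = g(h(\vecx)) = f(h^{-1}(h(\vecx))) = f(\vecx).
\]
Combining the three bounds gives $\ssnorm{\hatf}_{BV(\sigma)} \le C_1 \cdot 10 \cdot C_0\, \norm{f}_{BV(\sigma_0)}$, so the theorem holds with $K_\sigma = 10\, C_0 C_1$, a constant depending only on $\sigma$.

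There is no serious obstacle here, since Theorem~\ref{isom-poly} carries the analytic weight. The only points that require care are ensuring that the same homeomorphism $h$ induces both isomorphisms compatibly, so that an extension on $S$ really does pull back to an extension of $f$ on $\sigma$, and noting that the isomorphism constants $C_0, C_1$ are finite (which is automatic, as a Banach algebra isomorphism is bounded with bounded inverse). I would emphasize that the resulting $K_\sigma$ is neither explicit nor claimed to be sharp, in keeping with the paper's stated policy of not optimizing constants; it merely packages the geometric distortion of $h$ together with the factor $10$ from the square case.
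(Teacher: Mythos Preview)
Your proposal is correct and follows essentially the same route as the paper's own proof: transport to the unit square via the homeomorphism of Theorem~\ref{isom-poly}, extend there using Theorem~\ref{extend-S}, and pull back. Your write-up is in fact a little more careful than the paper's in distinguishing the boundary isomorphism $\Phi_0$ from the filled-region isomorphism $\Phi_1$ and in verifying explicitly that the pulled-back function restricts to $f$ on $\sigma_0$; the paper simply writes a single $\Phi$ and records $K_\sigma = 18\,\norm{\Phi^{-1}}\,\norm{\Phi}$ (with an unexplained $18$ in place of your $10$).
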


\begin{proof}
Let $h$ and $\phi$ be the maps guaranteed by Theorem~\ref{isom-poly}. Then $g = \Phi(f) \in AC(\partial S)$ and so by Theorem~\ref{extend-S}, $g$ admits an extension ${\hat g} \in AC(S)$. The form of $\Phi$ then ensures that $\Phi^{-1}({\hat g})$ is an extension of $f$. 

Finally $\ssnorm{\hatf} \le \norm{\Phi^{-1}} \norm{\hat g} \le 18 \norm{\Phi^{-1}} \norm{\Phi} \norm{f}$, so we can take $K_\sigma = 18 \norm{\Phi^{-1}} \norm{\Phi}$ which only depends on $\sigma$. 
\end{proof}

Examining the proof of Theorem~\ref{isom-poly} in \cite{DL} shows that the norm $K_\sigma$ in the above proof actually only depends on the number of sides of $\sigma$. 

%%%%%%%%%%%%%%%%%%%%%%%%%%%%%%%%%%%%%%
%%%%%%%%%%%%%%%%%%%%%%%%%%%%%%%%%%%%%%

\section{Joining theorems}\label{S:joining-thms}

Many of the later constructions will require that an extension be separately constructed on different parts of the larger set $\sigma$. The challenge then is to show that the function is absolutely continuous on the whole of the set. In general, if $\sigma = \sigma_1 \cup \sigma_2$ then knowing that $f|\sigma_1 \in AC(\sigma_1)$ and $f|\sigma_2 \in AC(\sigma_2)$ is not enough to deduce that $f \in AC(\sigma)$. 

\begin{example}\label{bad-join} Suppose that $\{x_k\}_{k=1}^\infty$ is a strictly decreasing sequence of positive numbers with limit $0$.
Let $\sigma = \{ x_k \st k = 1,2,3,\dots\bigr\} \cup \{0\}$, $\sigma_1 = \{x_{2k} \st k = 1,2,3,\dots\} \cup\{0\}$ and $\sigma_2 = \{x_{2k-1} \st k = 1,2,3,\dots\} \cup \{0\}$. Define $f: \sigma \to \mC$ by $f(x_k) = \frac{(-1)^k}{k}$, with $f(0) = 0$. Then the restrictions of $f$ to $\sigma_1$ and to $\sigma_2$ are absolutely continuous, but $f$ is not even of bounded variation on $\sigma$. 
\end{example}

If $f_1: \sigma_1 \to \mC$ and $f_2: \sigma_2 \to \mC$ agree on $\sigma_1 \cap \sigma_2$, we shall call the function determined by 
  $ f(\vecx) = f_i(\vecx)$ for $\vecx \in \sigma_i$ the \textbf{joined function} of $f_1$ and $f_2$.
If $\sigma_1$ and $\sigma_2$ are disjoint, then the Patching Lemma (Theorem~\ref{patching-lemma}) implies that $f \in AC(\sigma)$. The difficulty then occurs in the case that $\sigma_1$ and $\sigma_2$ overlap. Under suitable conditions on $\sigma_1$ and $\sigma_2$ one can at least rule out the possibility that the joined function fails to be of bounded variation. 

We shall say that $\sigma_1$ and $\sigma_2$ \textbf{join convexly} if for given any $\vecx \in \sigma_1 \setminus \sigma_2$ and $\vecy \in \sigma_2 \setminus \sigma_1$ the line segment joining $\vecx$ and $\vecy$ contains a point $\vecw \in \sigma_1 \cap \sigma_2$. 

\begin{theorem}\label{join-conv} {\rm \cite[Theorem 3.8]{DLS1}} 
Suppose that $\sigma_1,\sigma_2 \subseteq \mR^2$ are nonempty compact sets which are disjoint except at their boundaries, and that $\sigma_1$ and $\sigma_2$ join convexly. Let $\sigma = \sigma_1 \cup \sigma_2$. If $f_1 \in BV(\sigma_1)$ and $f_2 \in BV(\sigma_2)$ agree on $\sigma_1 \cap \sigma_2$, then their joined function $f$ lies in $BV(\sigma)$ and $\norm{f}_{BV(\sigma)} \le \norm{f_1}_{BV(\sigma_1)} + \norm{f_2}_{BV(\sigma_2)}$.
\end{theorem}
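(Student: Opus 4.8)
The plan is to reduce the whole statement to the single inequality $\var(f,\sigma) \le \var(f_1,\sigma_1) + \var(f_2,\sigma_2)$. The supremum part of the norm is immediate: since $f_1$ and $f_2$ agree on $\sigma_1\cap\sigma_2$ the joined function $f$ is well defined, and $\sup_{\vecx\in\sigma}|f(\vecx)| = \max\bigl(\sup_{\sigma_1}|f_1|,\sup_{\sigma_2}|f_2|\bigr) \le \sup_{\sigma_1}|f_1| + \sup_{\sigma_2}|f_2|$. Adding this to the variation inequality and regrouping gives exactly $\norm{f}_{\BV(\sigma)} \le \norm{f_1}_{\BV(\sigma_1)} + \norm{f_2}_{\BV(\sigma_2)}$.

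To establish the variation inequality, I would fix an arbitrary list $S = [\vecx_0,\dots,\vecx_n]$ of points of $\sigma$ and aim to bound $\cvar(f,S)/\vf(S)$. The first step is to subdivide $S$ at the interface between the two sets. Whenever an edge $\ls[\vecx_{k-1},\vecx_k]$ runs from a point of $\sigma_1\setminus\sigma_2$ to a point of $\sigma_2\setminus\sigma_1$, the hypothesis that $\sigma_1$ and $\sigma_2$ join convexly provides a point $\vecw\in\sigma_1\cap\sigma_2$ lying on that segment, and I insert it. Let $S'$ be the refined list. Because each inserted point is collinear with the edge it splits, $\gamma_{S'}$ traces the same curve as $\gamma_S$, so $\vf(S')=\vf(S)$, while the triangle inequality gives $\cvar(f,S)\le\cvar(f,S')$. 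After this subdivision every edge of $S'$ is \emph{monochromatic}: its endpoints either both lie in $\sigma_1$ or both lie in $\sigma_2$, with the shared points of $\sigma_1\cap\sigma_2$ assignable to either side. Thus $\cvar(f,S')$ splits into a sum of type-$1$ terms, on which $f=f_1$, and type-$2$ terms, on which $f=f_2$.

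Next I would collapse the two colours into two genuine lists. Let $T_1$ be the subsequence of $S'$ consisting of the points lying in $\sigma_1$, and $T_2$ the subsequence of points in $\sigma_2$; points of $\sigma_1\cap\sigma_2$ appear in both. Every type-$1$ edge of $S'$ is an edge of $T_1$, so $\sum_{\text{type-}1}|f(\vecx_k)-f(\vecx_{k-1})| \le \cvar(f_1,T_1)$, the extra ``bridging'' edges of $T_1$ (each joining the two endpoints, both in $\sigma_1\cap\sigma_2$, of a deleted $\sigma_2$-excursion) contributing only nonnegative terms; symmetrically for $T_2$. Combining with the definition of the variation gives
\[
  \cvar(f,S') \le \cvar(f_1,T_1)+\cvar(f_2,T_2) \le \var(f_1,\sigma_1)\,\vf(T_1) + \var(f_2,\sigma_2)\,\vf(T_2).
\]
If one knows that $\vf(T_i)\le\vf(S')=\vf(S)$, then dividing through by $\vf(S)$ and taking the supremum over all lists $S$ yields the desired inequality.

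The crux, and the step I expect to be hardest, is precisely this claim that passing to a subsequence cannot increase the variation factor, $\vf(T_i)\le\vf(S')$. This is a purely combinatorial-geometric statement about the piecewise linear path, which I would prove by induction on the number of deleted points, reducing to the deletion of a single point, that is, to replacing a sub-path $\vecu\to\vecv\to\vecw$ by the chord $\ls[\vecu,\vecw]$. The key observations are that a straight segment meets any line $\ell$ in a connected set, and that the endpoints $\vecu,\vecw$ (whose position relative to $\ell$ governs how preimage components merge at the junctions with the retained part of the path) are left unchanged; consequently the chord crosses $\ell$ only when the detour already did, and the number of connected components of $\gamma^{-1}(\ell)$ cannot rise. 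Making this junction bookkeeping fully rigorous is the delicate part, and here I would lean on the established properties of $\vf$ recorded in the references cited earlier.
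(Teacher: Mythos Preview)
The paper does not actually prove this theorem: it is stated with a citation to \cite[Theorem~3.8]{DLS1} and no proof is given here. Your proposal is the natural argument and is essentially the one carried out in the cited reference, so there is nothing to compare against in the present paper.

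A brief comment on the one genuinely delicate point you flag. The monotonicity of the variation factor under passing to a sublist, $\vf(T_i)\le\vf(S')$, is exactly the right thing to isolate, and your reduction to deleting a single vertex is the standard way to handle it. The property is recorded in the references the paper points to (see \cite[Section~2.1]{DLS1} and the appendix to \cite{AS3}), so invoking it is legitimate; but if you were writing this out in full you would want to be careful with the boundary case where both retained endpoints $\vecu,\vecw$ lie on the test line $\ell$, since then the chord $\ls[\vecu,\vecw]$ lies entirely on $\ell$ and one must check that the resulting merged component does not interact badly with the adjacent segments of the path. It does not, but this is where the ``junction bookkeeping'' you mention needs a sentence or two of care.
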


If $\sigma_1$ and $\sigma_2$ lie on either side of a straight line  then one can, at the cost of an extra factor, remove the convexity hypothesis. Let $H^+$ and $H^-$ denote the closed upper and lower half-planes in $\mR^2$ and let $L$ be the $x$-axis.

\begin{theorem}\label{join-line}
Suppose that $\sigma_1 \subseteq H^+$ and $\sigma_2 \subseteq H^-$ are nonempty compact sets, that $\sigma_2 \cap L \subseteq \sigma_1 \cap L$, and that $\sigma = \sigma_1 \cup \sigma_2$.
If $f: \sigma \to \mC$ then
  \[ \norm{f}_{BV(\sigma)} \le 2\bigl( \norm{f|\sigma_1}_{BV(\sigma_1)} + \norm{f|\sigma_2}_{BV(\sigma_2)} \bigr). \]
\end{theorem}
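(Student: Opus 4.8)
The plan is to bound the two parts of $\norm{f}_{BV(\sigma)} = \sup_{\sigma}\abs{f} + \var(f,\sigma)$ separately. The supremum is harmless: every point of $\sigma$ lies in $\sigma_1$ or $\sigma_2$, so $\sup_\sigma\abs{f} \le \sup_{\sigma_1}\abs{f} + \sup_{\sigma_2}\abs{f}$. All the work is in estimating $\var(f,\sigma)$, and here I would argue directly from the definition. The one structural observation I need first is that the hypothesis $\sigma_2 \cap L \subseteq \sigma_1 \cap L$, together with $\sigma_1 \subseteq H^+$ and $\sigma_2 \subseteq H^-$, forces $\sigma \cap L \subseteq \sigma_1$; that is, \emph{every point of $\sigma$ lying on $L$ belongs to $\sigma_1$}. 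This is the only place the hypothesis is used, and it is exactly what legitimises the splitting below.

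Fix a finite list $S = [\vecx_0,\dots,\vecx_n]$ of points of $\sigma$ (we may assume both pieces have finite variation, else there is nothing to prove). Call a point of $S$ \emph{high} if it lies in $H^+$ (equivalently, by the observation above, if it lies in $\sigma_1$) and \emph{low} if it lies strictly below $L$ (so that it lies in $\sigma_2 \setminus L \subseteq \sigma_2$). Let $S_1$ be the sublist of high points and $S_2$ the sublist of low points; these are genuine lists in $\sigma_1$ and $\sigma_2$. Each edge $\ls[\vecx_{k-1},\vecx_k]$ is then of exactly one of three types: both endpoints high, both endpoints low, or \emph{mixed}. If both endpoints are high they are consecutive in $S_1$, so their increment occurs among the increments of $S_1$; summing, the all-high edges contribute at most $\cvar(f,S_1)$, and likewise the all-low edges contribute at most $\cvar(f,S_2)$.

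For the mixed edges I would use two facts. First, a mixed edge joins a point of $\sigma_1$ to a point of $\sigma_2$, so its increment is at most $\sup_{\sigma_1}\abs{f} + \sup_{\sigma_2}\abs{f}$. Second, the number of mixed edges is at most $\vf(S)$: choosing $\epsilon>0$ small enough that every low point of $S$ has second coordinate $<-\epsilon$, the horizontal line $\ell_\epsilon\colon y=-\epsilon$ has all vertices of $S$ strictly off it, is crossed exactly once by each mixed edge, and is met by no pure edge, so $\gamma_S^{-1}(\ell_\epsilon)$ has exactly one component per mixed edge and $\vf(S) \ge \vf(S,\ell_\epsilon) = \#\{\text{mixed edges}\}$. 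Combining the three types,
\[ \cvar(f,S) \le \cvar(f,S_1) + \cvar(f,S_2) + \bigl(\sup_{\sigma_1}\abs{f} + \sup_{\sigma_2}\abs{f}\bigr)\,\vf(S). \]
Granting that passing to a sublist does not increase the variation factor, so that $\vf(S_1)\le\vf(S)$ and $\vf(S_2)\le\vf(S)$, I divide by $\vf(S)$ and use $\cvar(f,S_i)/\vf(S)\le \cvar(f,S_i)/\vf(S_i)\le\var(f,\sigma_i)$ to get
\[ \frac{\cvar(f,S)}{\vf(S)} \le \var(f,\sigma_1) + \var(f,\sigma_2) + \sup_{\sigma_1}\abs{f} + \sup_{\sigma_2}\abs{f}. \]
Taking the supremum over $S$ bounds $\var(f,\sigma)$; adding the earlier estimate for $\sup_\sigma\abs{f}$ yields $\norm{f}_{BV(\sigma)} \le 2\sup_{\sigma_1}\abs{f} + 2\sup_{\sigma_2}\abs{f} + \var(f,\sigma_1) + \var(f,\sigma_2)$, which is at most the claimed $2\bigl(\norm{f|\sigma_1}_{BV(\sigma_1)} + \norm{f|\sigma_2}_{BV(\sigma_2)}\bigr)$.

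The main obstacle is the boxed monotonicity of the variation factor under passing to a sublist, which is what replaces the convexity hypothesis of Theorem~\ref{join-conv}: there one inserts genuine points of $\sigma_1\cap\sigma_2$ on the crossing segments and leaves the path (hence $\vf$) unchanged, whereas here the crossing points lie on $L$ and need not belong to $\sigma$, so I instead \emph{delete} the wrong-side vertices and must check no new crossings appear. I would verify this one vertex at a time: deleting $\vecx_j$ replaces $\ls[\vecx_{j-1},\vecx_j]$ and $\ls[\vecx_j,\vecx_{j+1}]$ by the single segment $\ls[\vecx_{j-1},\vecx_{j+1}]$; since a single segment meets any line $\ell$ in at most one component, while any crossing of $\ell$ by $\ls[\vecx_{j-1},\vecx_{j+1}]$ forces $\vecx_{j-1},\vecx_{j+1}$ onto opposite sides of $\ell$ and hence forces the original two segments to meet $\ell$ too, the component count of $\gamma^{-1}(\ell)$ cannot rise (the path, and so its component structure, is unchanged outside the modified portion and at the shared endpoints $\vecx_{j-1},\vecx_{j+1}$). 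Iterating gives $\vf(S')\le\vf(S)$ for every sublist $S'$, which is the one delicate point of the argument.
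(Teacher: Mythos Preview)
Your argument is correct and follows the same route as the paper's: both split the edges of $S$ into those with both endpoints in $\sigma_1$, those with both in $\sigma_2$, and the mixed edges, bound the number of mixed edges by $\vf(S)$ via a horizontal line just below $L$, and assemble the same inequality. The only difference is cosmetic: where the paper invokes the proof of \cite[Lemma~4.2]{DLS1} to get $\sum_{j\in J_i}\abs{f(\vecx_j)-f(\vecx_{j-1})}\le\var(f,\sigma_i)\,\vf(S)$ directly, you instead pass to the sublists $S_i$ and use the monotonicity $\vf(S_i)\le\vf(S)$, which is the mechanism underlying that lemma. Your one-vertex-deletion justification of this monotonicity is sound but slightly underargued in the case where $\vecx_{j-1}$ or $\vecx_{j+1}$ lies on $\ell$ (so that ``crossing forces opposite sides'' no longer applies, and the new segment may even lie entirely on $\ell$); a moment's thought shows the component count still cannot rise there, since the modified portion of $\gamma_{S'}^{-1}(\ell)$ remains a single connected set that can only merge the components meeting it at $t_{j-1}$ and $t_{j+1}$.
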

  
\begin{proof}
Suppose that $S = [\vecx_0,\vecx_1,\dots,\vecx_n]$ is a list of elements of $\sigma$ and let $J = \{1,\dots,n\}$.
Let
\begin{align*}
  J_1 &= \{j \in J \st \vecx_{j-1},\vecx_j \in \sigma_1\}, \\
  J_2 &= \{j  \in J \st \vecx_{j-1},\vecx_j \in \sigma_2 \}, \\
  J_3 &= J \setminus (J_1 \cup J_2).
\end{align*}
Then, noting that $J_1$ and $J_2$ need not be disjoint, and considering empty sums as zero,
  \[
  \cvar(f,S) = \sum_{j=1}^n |f(\vecx_j) - f(\vecx_{j-1})| \\
               \le \sum_{k=1}^3 \sum_{j \in J_k} |f(\vecx_j) - f(\vecx_{j-1})|.
  \]
As in the proof of  \cite[Lemma~4.2]{DLS1}, $ \sum_{j \in J_1} |f(\vecx_j) - f(\vecx_{j-1})| \le \var(f,\sigma_1) \vf(S)$ and $ \sum_{j \in J_2} |f(\vecx_j) - f(\vecx_{j-1})| \le \var(f,\sigma_2) \vf(S)$. 
Note that if $j \in J_3$ then one end of the line segment $s_j = \ls[\vecx_{j-1},\vecx_j]$ lies $H^+$ while the other end, which we shall denote $\vecu_j = (u_j,v_j)$, lies in the open lower half-plane. Thus $v = \sup_{j \in J_3} v_j$ is strictly negative and each of these line segments crosses the horizontal line $y = v/2$. This implies that $\vf(S)$ is at least the number of elements in $J_3$. Thus
  \[  \sum_{j \in J_3} |f(\vecx_j) - f(\vecx_{j-1})|
   \le \bigl(\norm{f|\sigma_1}_\infty + \norm{f|\sigma_2}_\infty\bigr) \vf(S). \] 
It follows that 
  \[ \frac{\cvar(f,S)}{\vf(S)} \le \norm{f|\sigma_1}_{BV(\sigma_1)} + \norm{f|\sigma_2}_{BV(\sigma_2)}, \]
and so, on taking the supremum over all such lists $S$,
  \[ \norm{f}_{BV(\sigma)} = \norm{f}_\infty + \var(f,\sigma)
                      \le 2 \bigl( \norm{f|\sigma_1}_{BV(\sigma_1)} + \norm{f|\sigma_2}_{BV(\sigma_2)} \bigr). \qedhere \]
\end{proof}

By considering the case where $\sigma_1$ and $\sigma_2$ are disjoint and $f$ is the characteristic function of $\sigma_1$, one can observe that the factor 2 in the above theorem is necessary.
Of course by affine invariance, the same result holds with the $x$-axis replaced by any other line in the plane.

An important special case is when a line and a convex curve meet at a line tangential vertex.

% This formerly assumed that c is projectable, but I don't see how that is needed, so I have deleted it.

\begin{corollary}\label{LT-join-bound}
Suppose that the line segment $\ell = \ls[\vecx,\vecz]$ meets the  convex curve $c$ tangentially at $\vecx$, and let $\sigma = \ell \cup c$. If $f: \sigma \to \mC$ then 
  \[ \norm{f}_{BV(\sigma)} \le 2 \bigl(\norm{f}_{BV(\ell)}+\norm{f}_{BV(c)} \bigr). \]
\end{corollary}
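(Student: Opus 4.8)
The plan is to reduce Corollary~\ref{LT-join-bound} directly to Theorem~\ref{join-line}. The key geometric observation is that a line segment $\ell$ and a convex curve $c$ meeting tangentially at $\vecx$ can, after an affine change of coordinates, be placed so that they lie on opposite sides of their common tangent line at $\vecx$. So the first step is to invoke the affine invariance of $BV(\sigma)$ (Theorem~\ref{bv-properties}(iv)): since an invertible affine map $\phi$ gives an isometric isomorphism $BV(\sigma) \to BV(\phi(\sigma))$, and both sides of the claimed inequality transform isometrically, it suffices to prove the result for any affine image of the configuration. I would therefore choose $\phi$ to send $\vecx$ to the origin and the common tangent direction at $\vecx$ to the $x$-axis.

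Once this normalization is in place, the second step is to verify the hypotheses of Theorem~\ref{join-line}. After the affine transformation, the tangent line at $\vecx$ is the $x$-axis $L$. Because $c$ is a convex curve and $\ell$ meets it tangentially at $\vecx$, the segment $\ell$ departs from $\vecx$ along the tangent direction while the convex curve $c$ bends strictly to one side of $L$. I would take $\sigma_2 = \ell$ (lying, say, in the lower closed half-plane $H^-$, or along $L$ itself) and $\sigma_1 = c$ (lying in the upper closed half-plane $H^+$). The tangency is precisely what guarantees this half-plane separation: convexity forces $c$ to lie entirely on one side of its tangent line. The containment condition $\sigma_2 \cap L \subseteq \sigma_1 \cap L$ reduces to checking that $\ell \cap L \subseteq c \cap L$; since the only point of $c$ on $L$ is the tangency point $\vecx$ (the curve being otherwise strictly on one side), I need $\ell \cap L = \{\vecx\}$, which holds provided $\ell$ meets $L$ only at $\vecx$.

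The step I expect to require the most care is handling the degenerate possibility that $\ell$ lies along the tangent line $L$ itself, rather than strictly in the open lower half-plane. If $\ell \subseteq L$, then $\ell$ shares the endpoint $\vecx$ with $c$ but may otherwise run along $L$, and the naive assignment $\sigma_2 \subseteq H^-$ is only satisfied with equality on $L$; one must confirm Theorem~\ref{join-line} still applies with $\sigma_2 \cap L = \ell$ and $\sigma_1 \cap L = \{\vecx\} \subseteq \ell$ — note this forces $\sigma_1 \cap L \subseteq \sigma_2 \cap L$, so the roles of $\sigma_1$ and $\sigma_2$ should be swapped so that the curve $c$ plays the role of the set whose trace on $L$ is smaller. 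Since Theorem~\ref{join-line} is stated with $\sigma_2 \cap L \subseteq \sigma_1 \cap L$, I would assign $c$ to $\sigma_2$ and $\ell$ to $\sigma_1$ exactly when $c \cap L = \{\vecx\} \subseteq \ell \cap L$, keeping the side conditions intact.

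With the hypotheses verified, the final step is immediate: Theorem~\ref{join-line} yields
  \[ \norm{f}_{BV(\sigma)} \le 2 \bigl( \norm{f|\sigma_1}_{BV(\sigma_1)} + \norm{f|\sigma_2}_{BV(\sigma_2)} \bigr), \]
and after undoing the affine normalization (which preserves all norms by Theorem~\ref{bv-properties}(iv)) this is exactly the asserted bound $\norm{f}_{BV(\sigma)} \le 2 \bigl(\norm{f}_{BV(\ell)}+\norm{f}_{BV(c)} \bigr)$. The whole argument is thus a short geometric reduction, with the only genuine content being the observation that tangency plus convexity delivers the half-plane separation required by the joining theorem.
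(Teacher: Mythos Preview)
Your approach is correct and matches the paper's proof: apply an affine transformation to place the common tangent line on the $x$-axis and then invoke Theorem~\ref{join-line}. However, your case analysis is unnecessary and reflects a slight misreading of the geometry: saying $\ell$ meets $c$ \emph{tangentially} at $\vecx$ means precisely that the direction of $\ell$ coincides with the tangent direction of $c$ at $\vecx$, so after normalization $\ell$ \emph{always} lies on $L$ --- this is the only case, not a degenerate one. The correct assignment from the outset is therefore $\sigma_1 = \ell \subseteq L \subseteq H^+$ and $\sigma_2 = c \subseteq H^-$, with $\sigma_2 \cap L = \{\vecx\} \subseteq \ell = \sigma_1 \cap L$, exactly as the paper does in two lines.
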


\begin{proof}
After applying an appropriate affine transformation one need consider the case where $\ell = [0,1]$ and $c$ lies in the closed lower half-plane. The result then follows from Theorem~\ref{join-line} with $\sigma_1 = \ell$ and $\sigma_2 = c$.
\end{proof}

For the later results we shall piece together functions defined on polygonal pieces.  
It is not hard to adapt Example~\ref{bad-join} to show that even in the case of two polygonal sets $\sigma_1$ and $\sigma_2$, there is no constant $C$ such that $\norm{f}_{BV(\sigma)} \le C(\norm{f}_{BV(\sigma_1)} + \norm{f}_{BV(\sigma_2)})$. The next result will however allow us some control over the norms if we add triangular pieces one at a time.

Let $R_1$ and $R_2$ be two distinct rays in $\mR^2$ starting at the origin and let $S_1$ and $S_2$ be the two closed sectors of the plane with boundary $B = R_1 \cup R_2$. A half-plane affine map will refer to a homeomorphism of the plane which is affine on each of a pair of complementary half-planes. (Further details can be found in \cite[Section~4]{DL}.) 

\begin{theorem}\label{join-sectors}
Suppose that $\sigma_1 \subseteq S_1$ and $\sigma_2 \subseteq S_2$ are nonempty compact subsets of complementary sectors with boundary $B$, $\sigma_1 \cap B = \sigma_2 \cap B$, and $\sigma = \sigma_1 \cup \sigma_2$.
If $f: \sigma \to \mC$, then
  \[ \norm{f}_{BV(\sigma)} \le 8 \bigl( \norm{f|\sigma_1}_{BV(\sigma_1)} + \norm{f|\sigma_2}_{BV(\sigma_2)} \bigr). \]
\end{theorem}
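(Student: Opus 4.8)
The plan is to reduce the statement to the straight-line case already handled in Theorem~\ref{join-line} by applying a half-plane affine map that ``flattens'' the wedge $B = R_1 \cup R_2$ into a single line. Since invertible affine maps act isometrically on $BV$ (Theorem~\ref{bv-properties}(iv)), I would first normalise the configuration: translate the common vertex to the origin and rotate so that the bisector of one of the sectors, say $S_1$, becomes the $x$-axis. Thus $S_1$ is a sector $\{\,|\arg \vecz| \le \theta/2\,\}$ for some $\theta \in (0,2\pi)$, symmetric about the fold line $L = \{y = 0\}$, and $S_2$ is the complementary sector.

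The key construction is a half-plane affine map $h$ whose fold line is this $x$-axis. On the upper half-plane $H^+$ I would take the orientation-preserving linear map that fixes the $x$-axis pointwise and sends the ray of $S_1$ at angle $\theta/2$ to the positive $y$-axis; this opens the quarter-sector $S_1 \cap H^+$ onto the first quadrant and $S_2 \cap H^+$ onto the second quadrant. On $H^-$ I would take the mirror-symmetric map. These two affine pieces agree on the $x$-axis, so $h$ is a genuine homeomorphism of the plane, and by construction $h(S_1)$ is the right half-plane, $h(S_2)$ is the left half-plane, and $h(B)$ is the whole $y$-axis. It is worth stressing that although one cannot flatten a wedge by folding along one of its edges (an interior ray can never be pushed onto the boundary of a half-plane by an affine map of that half-plane), folding along the \emph{bisector} works: each half of the sector lies in its own half-plane and can be opened to a quarter-plane.

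With $h$ in hand I would set $\tilde f = f \circ h^{-1}$ on $h(\sigma)$. Since $h$ is a homeomorphism with $h(B) = L$, we have $h(\sigma_i) \cap L = h(\sigma_i \cap B)$, and the hypothesis $\sigma_1 \cap B = \sigma_2 \cap B$ then gives $h(\sigma_1) \cap L = h(\sigma_2) \cap L$. Hence (after an orthogonal rotation carrying $L$ to the $x$-axis) the hypotheses of Theorem~\ref{join-line} hold for $h(\sigma_1)$ and $h(\sigma_2)$, yielding $\norm{\tilde f}_{BV(h(\sigma))} \le 2\bigl(\norm{\tilde f}_{BV(h(\sigma_1))} + \norm{\tilde f}_{BV(h(\sigma_2))}\bigr)$. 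Finally I would transfer back through $h$: the map $g \mapsto g \circ h$ and its inverse are Banach algebra isomorphisms between the relevant $BV$ spaces, and the results of \cite[Section~4]{DL} bound the norm of each by $2$. Combining the pull-back factor, the factor $2$ from Theorem~\ref{join-line}, and the push-forward factor yields the constant $2 \cdot 2 \cdot 2 = 8$.

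The main obstacle is the distortion estimate for $h$, namely that composition with a half-plane affine map changes the $BV$ norm by at most a factor of $2$. This is where the variation factor must be controlled: a straight line is carried by $h$ (or $h^{-1}$) to a line with a single bend on the fold line, and a broken line of two pieces can be crossed by any straight segment at most twice as often as a single line, so each variation factor can at most double. This is precisely the content I would quote from \cite[Section~4]{DL}; the one technical point needing care is to check that it applies to our specific $h$ and that the bound is uniform over the subsets $h(\sigma_1)$ and $h(\sigma_2)$.
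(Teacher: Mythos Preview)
Your proposal is correct and follows essentially the same route as the paper: straighten the wedge $B$ via a half-plane affine map, apply Theorem~\ref{join-line} to pick up a factor $2$, and use the factor-$2$ distortion bound from \cite[Theorem~4.5]{DL} in each direction to arrive at $2\cdot 2\cdot 2 = 8$. The paper simply asserts the existence of such a map $\alpha$, while you supply the explicit construction with fold line along the bisector of $S_1$; your observation that folding along one of the rays $R_i$ cannot work is a nice point the paper leaves implicit.
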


\begin{proof} There exists a half-plane affine map $\alpha$ which maps $B$ onto a straight line through the origin. Let ${\hat \sigma}_1$, ${\hat \sigma}_2$ and $\hat \sigma$ be the images of $\sigma_1$, $\sigma_2$ and $\sigma$ under $\alpha$, and let 
$g: {\hat \sigma} \to \mC$ be $g = f \circ \alpha^{-1}$. Then by \cite[Theorem~4.5]{DL} and Theorem~\ref{join-line}
  \begin{align*}
  \norm{f}_{BV(\sigma)} & \le 2 \ssnorm{g}_{BV({\hat \sigma})} \\
                     &\le 4 \bigl( \ssnorm{g}_{BV({\hat \sigma}_1)} 
                              + \ssnorm{g}_{BV({\hat \sigma}_2)} \bigr) \\
                     &\le 8 \bigl( \norm{f}_{BV({\sigma}_1)} 
                              + \norm{f}_{BV({\sigma}_2)} \bigr). \qedhere
  \end{align*}
\end{proof} 

The factor $8$ here is unlikely to be sharp, but again by considering characteristic functions, one can see that one needs at least a factor of $3$ in the bound.

\begin{theorem}\label{pasting-lemma-ac}
Suppose that $\sigma_1 \subseteq H^+$ and $\sigma_2 \subseteq H^-$ are nonempty compact sets, that $\sigma_2 \cap L \subseteq \sigma_1 \cap L$, and that $\sigma = \sigma_1 \cup \sigma_2$. 
Suppose
that $f: \sigma \to \mC$. If
$f|\sigma_1 \in \AC(\sigma_1)$ and $f|\sigma_2 \in \AC(\sigma_2)$, then $f \in
\AC(\sigma)$ 
 and
  \[ \normbv{f}\le 2\bigl( \norm{f|\sigma_1}_{\BV(\sigma_1)} +
\norm{f|\sigma_2}_{\BV(\sigma_2)} \bigr). \]
\end{theorem}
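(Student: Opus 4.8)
The norm inequality is already contained in Theorem~\ref{join-line}, which applies to an \emph{arbitrary} $f:\sigma\to\mC$; the whole content of the statement is therefore the membership $f\in\AC(\sigma)$, and it is precisely here that the absolute continuity of $f|\sigma_1$ and $f|\sigma_2$ (rather than mere bounded variation) must be used. The plan is to first strip off the behaviour of $f$ along $L$, then reduce to a one-sided extension-by-zero problem, and finally settle that problem by a cutoff argument built on the Patching Lemma (Theorem~\ref{patching-lemma}).

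First I would record that $\sigma\cap L=\sigma_1\cap L$, since $\sigma_2\cap L\subseteq\sigma_1\cap L$, and that any point of $\sigma_1\cap\sigma_2$ lies in $H^+\cap H^-=L$, so $\sigma_1\cap\sigma_2\subseteq\sigma\cap L$. By Theorem~\ref{ac-properties}\ref{restr} the trace $f|_{\sigma\cap L}$ lies in $\AC(\sigma\cap L)$, so by Theorem~\ref{real-ext} (applied with $L$ in the role of the real axis) it extends to a function $\bar f\in\AC(\sigma)$ that depends only on the coordinate along $L$ and agrees with $f$ on $\sigma\cap L$. Replacing $f$ by $f-\bar f$ changes neither the hypotheses nor membership in $\AC(\sigma)$, so I may assume $f\equiv 0$ on $\sigma\cap L$. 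Because $\sigma_1\cap\sigma_2\subseteq\sigma\cap L$, the functions $V_1$ (equal to $f$ on $\sigma_1$ and $0$ on $\sigma_2$) and $V_2$ (equal to $f$ on $\sigma_2$ and $0$ on $\sigma_1$) are then well defined and satisfy $V_1+V_2=f$. By the symmetry between the two half-planes it suffices to prove $V_1\in\AC(\sigma)$.

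Set $g=f|\sigma_1\in\AC(\sigma_1)$, so that $g$ vanishes on $\sigma_1\cap L$. For $\epsilon>0$ let $\phi_\epsilon$ be the continuous function of the transverse coordinate $y$ that equals $0$ for $y\le\epsilon$, rises linearly to $1$ on $[\epsilon,2\epsilon]$, and equals $1$ thereafter; being a function of $y$ alone it lies in $\AC(\sigma)$ by (an affine-invariant application of) Theorem~\ref{ac-properties}\ref{r-ext}. The function $W_\epsilon$ equal to $g\phi_\epsilon$ on $\sigma_1$ and $0$ on $\sigma_2$ then vanishes on the neighbourhood $\{y<\epsilon\}$ of $L$ in $\sigma$, so every point of $\sigma$ has a compact neighbourhood on which $W_\epsilon$ is either identically $0$ or equals the restriction of the $\AC$ function $g\phi_\epsilon$; the Patching Lemma gives $W_\epsilon\in\AC(\sigma)$. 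Applying Theorem~\ref{join-line} to $V_1-W_\epsilon$, which is supported in $\sigma_1$ and vanishes on the overlap, yields $\norm{V_1-W_\epsilon}_{\BV(\sigma)}\le 2\norm{g(1-\phi_\epsilon)}_{\BV(\sigma_1)}$. Since $\AC(\sigma)$ is closed in $\BV(\sigma)$, it remains only to show that this quantity tends to $0$ as $\epsilon\to 0$.

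The main obstacle is exactly this last estimate. The supremum-norm part is routine: a compactness argument shows that the slab $\sigma_1\cap\{y\le 2\epsilon\}$ collapses onto $\sigma_1\cap L$ as $\epsilon\to0$, and since $g$ vanishes there and is continuous, $g$ is uniformly small on the slab. The hard part is the variation $\var(g(1-\phi_\epsilon),\sigma_1)$, and this is where absolute continuity is essential. The product rule for the variation controls it by a term involving the (uniformly small) supremum of $g$ on the slab times the $\epsilon$-independent variation of the monotone cutoff $1-\phi_\epsilon$, plus a term governed by the variation of $g$ on the collapsing slab. For a function that is merely of bounded variation the latter can persist (variation may concentrate along $L$), so to close the argument I would approximate $g$ in $\BV(\sigma_1)$ by a polynomial, correct it by a function of $y$ alone so that it still vanishes on $\sigma_1\cap L$, and then show that the variation of such a smooth function over the shrinking slab tends to $0$. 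I expect this localized variation-decay step to demand the most care, and it is precisely what upgrades the conclusion from $\BV(\sigma)$ to genuine membership in $\AC(\sigma)$.
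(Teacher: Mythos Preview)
Your reduction (subtract the $x$-extension $\bar f$ of $f|_{\sigma\cap L}$, split $f=V_1+V_2$, approximate $V_1$ by the truncation $W_\epsilon=g\phi_\epsilon$) is sound, and the Patching Lemma argument for $W_\epsilon\in AC(\sigma)$ is correct. This differs from the paper's route: after the same subtraction, the paper approximates $f_1|\sigma_i$ by a function $h_i\in\CTPP(P_i)$ defined on the full half-rectangle, extends each $h_i$ across $L$ by $h_i(x,y)=h_i(x,0)$, and sets $h=h_1+h_2\in\CTPP(P)$. The point is that the spill-over of $h_1$ onto $\sigma_2$ is governed by the one-variable trace $h_1(\cdot,0)$, which is already $\epsilon$-close to $f_1\equiv0$ on $\tau$; no cutoff is used, and absolute continuity enters only through the density of $\CTPP$.

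The gap in your sketch is the final step. You need $\norm{g(1-\phi_\epsilon)}_{BV(\sigma_1)}\to0$, and you propose to approximate $g$ by a polynomial $p$ and then ``correct it by a function of $y$ alone so that it still vanishes on $\sigma_1\cap L$''. But a function of $y$ alone is constant on $L$, so such a correction cannot force $p$ to vanish on $\tau$ unless $p|_\tau$ were already constant. The obvious repair---subtract the function of $x$ given by $a(x)=p(x,0)$, so that $q=p-a$ vanishes on all of $L$ and factors as $q=y\,r$, whence $\norm{q(1-\phi_\epsilon)}_{BV(\sigma_1)}\le\norm{y(1-\phi_\epsilon)}_{BV}\norm{r}_{BV}=O(\epsilon)$---still leaves you needing $\norm{g-q}_{BV(\sigma_1)}$ small, i.e.\ $\norm{a}_{BV(\sigma_1)}$ small. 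Since $a$ depends only on $x$, that norm equals $\norm{a}_{BV(\sigma_{1,x})}$ on the full $x$-projection of $\sigma_1$, whereas the approximation only gives $\norm{a}_{BV(\tau)}=\norm{p-g}_{BV(\tau)}<\delta$. As $\sigma_{1,x}$ can strictly contain $\tau$ (the projection of $\sigma_1$ need not lie in $\sigma_1\cap L$), this term is not automatically small, and your ``variation over the shrinking slab'' heuristic does not close the gap. One way to rescue the argument is to replace $a$ on $\sigma_{1,x}$ by the isometric $\iota$-extension (Lemma~\ref{iota-lemma}) of $a|_\tau$; another is simply to follow the paper and work with an approximant already defined on the ambient rectangle.
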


\begin{figure}[ht!]
\begin{center}
\begin{tikzpicture}[scale=1]

   \begin{scope}
      \clip (0,2) rectangle (6,4);
      \fill[blue!20,rotate around={45:(3,2)}] (3,2) ellipse (2cm and 1cm);
   \end{scope}
   \begin{scope}
      \clip (0,0) rectangle (6,2);
      \fill[green!20,rotate around={45:(3,2)}] (3,2) ellipse (2cm and 1cm);
   \end{scope}
     \draw[thick, rotate around={45:(3,2)}] (3,2) ellipse (2cm and 1cm);
     \draw[white,fill] (2.5,3.7) -- (3.5,3) -- (4.5,3.7) -- (2.5,3.7);
     \draw[black,thick] (3.02,3.3) -- (3.5,3) -- (4.25,3.52);
     \draw[white,fill] (3,1.3) circle (0.3cm);
     \draw[black,thick] (3,1.3) circle (0.3cm);
     \draw (0,2) -- (6,2);
    \draw (3.2,2.3) node[right] {$\sigma_1$};
    \draw (2,1.6) node[right] {$\sigma_2$};
    \draw[red,dashed] (1,0) -- (5.4,0) -- (5.4,4) -- (1,4) -- (1,0) ;
    \draw[red] (2,3.7) node {$P$};
\end{tikzpicture}
\end{center}
\caption{Diagram for Theorem \ref{pasting-lemma-ac}.}
\label{join-ellipse}
\end{figure}
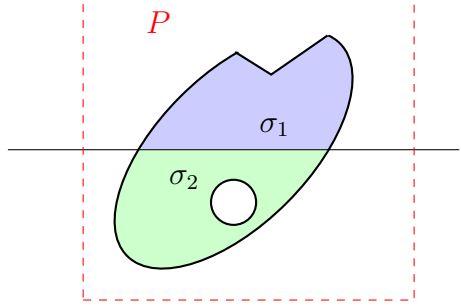

\begin{proof} The bound on the variation norm comes from the previous theorem, so the task is to show that $f$ is absolutely continuous.
If $\sigma_1 \cap \sigma_2 = \emptyset$, then the absolutely continuity follows immediately from Theorem~\ref{patching-lemma}
%\cite[Theorem~4.1]{DLS1}.

Suppose then that $\sigma_1 \cap \sigma_2 \ne \emptyset$ so that $\tau = \sigma_1 \cap L$ is nonempty. 
Choose $a_0,b_0 \in \mR$ such that $a_0 \le x \le b_0$ for all $(x,y) \in \sigma$ and let $\tau_0 = [a_0,b_0] \times \{0\}$. Let $P = [a_0,b_0] \times [c_0,d_0]$ be a closed rectangle containing $\sigma$. Let $P_1 = [a_0,b_0] \times [0,d_0]$ and let $P_2 = [a_0,b_0] \times [c_0,d_0]$, so that $\sigma_1 \subseteq P_1$ and $\sigma_2 \subseteq P_2$.

Now $f|\tau \in AC(\tau)$ so by Lemma~\ref{iota-lemma} and Theorem~\ref{ac-properties}\ref{r-ext} there is a function $g \in AC(\tau_0)$ such that $g|\tau = f|\tau$.  By setting $g(x,y) = g(x,0)$ for all $y$, we can then extend to $g \in AC(P)$ with $\norm{g}_{BV(P)} = \norm{f}_{BV(\tau)}$.

Fix $\epsilon > 0$. Let $f_1  = f  - g|\sigma$. Clearly $f \equiv 0$ on $\tau$,  $f_1|\sigma_1 \in AC(\sigma_1)$ and $f_1|\sigma_2 \in AC(\sigma_2)$.

 As $f_1| \sigma_1 \in AC(\sigma_1)$, by Lemma~3.2 and Theorem~6.1 of \cite{DLS2} there exists $h_1 \in \CTPP(P_1)$ with $\norm{f_1 - h_1}_{BV(\sigma_1)} \le \epsilon/8$. 

Now $h_1| \tau_0$ is piecewise linear, and in particular it is absolutely continuous. We can therefore extend $h_1$ to $P_2$ by setting $h_1(x,y) = h_1(x,0)$ for all $(x,y) \in P_2$. Note that $h_1 \in \CTPP(P)$. Now
  \begin{align*} \norm{h_1}_{BV(\sigma_2)} = \norm{h_1}_{BV(\tau_0)}
       &= \norm{f_1 - h_1}_{BV(\tau_0)}  \\
       &= \norm{f_1 - h_1}_{BV(\tau)}
       \le \norm{f_1 - h_1}_{BV(\sigma_1)} < \epsilon/8.
  \end{align*}
Similarly, construct $h_2 \in \CTPP(P_2)$ with $\norm{f_1 - h_2}_{BV(\sigma_2)} < \epsilon/8$ and extend this to all of $P$ as above, so that $\norm{h_2}_{BV(\sigma_1)} < \epsilon/8$. 

Let $h = h_1 + h_2 \in \CTPP(P)$. Then
  \[ \norm{f_1 - h}_{BV(\sigma_1)} \le \norm{f_1-h_1}_{BV(\sigma_1)} + \norm{h_2}_{BV(\sigma_1)}
       < \epsilon/4.\]
Similarly, $\norm{f_1 - h}_{BV(\sigma_2)} < \epsilon/4$ and so, by Theorem~\ref{join-line}, $\norm{f_1 - h}_{BV(\sigma)} < \epsilon$. Thus, $f_1 \in AC(\sigma)$. Since $g|\sigma \in AC(\sigma)$, this implies that $f \in AC(\sigma)$.
\end{proof}

 The next theorem says that we can join functions which are absolutely continuous on polygonal regions.

\begin{theorem}\label{patching-polygons}
Let $Q_1,\dots,Q_n$ be disjoint bounded open sets whose boundaries consist of a finite number of line segments. Let $P_i = \cl(Q_i)$, $1 \le i \le n$, and let $\sigma = \cup_{i=1}^n P_i$. Suppose that $f: \sigma \to \mC$. 
Then $f \in AC(\sigma)$ if and only if $f|P_i \in AC(P_i)$ for each $i$.
\end{theorem}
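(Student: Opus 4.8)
The forward implication is immediate: if $f \in AC(\sigma)$ then, since each $P_i$ is a nonempty compact subset of $\sigma$, the restriction property (Theorem~\ref{ac-properties}\ref{restr}) gives $f|P_i \in AC(P_i)$. The substance of the theorem is the converse, and the plan is to establish it via the Patching Lemma (Theorem~\ref{patching-lemma}): it suffices to produce, for each $\vecx \in \sigma$, a compact neighbourhood $U$ of $\vecx$ with $f|U \in AC(U)$.

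Fix $\vecx \in \sigma$ and set $\overline D = \overline{B(\vecx,r)}$. Because there are finitely many polygonal regions, with finitely many vertices and edges, for all sufficiently small $r>0$ the set $U = \sigma \cap \overline D$ is a finite union of closed circular sectors (wedges) $W_1,\dots,W_m$ with common apex $\vecx$ and pairwise disjoint interiors (disjointness of the $Q_i$), each $W_k$ contained in some $P_i$. If $\vecx \in \intr(P_i)$ then $U = \overline D$ and $f|U \in AC(U)$ by restriction, so I may assume $\vecx$ is a boundary point. By Theorem~\ref{ac-properties}\ref{restr} each $f|W_k \in AC(W_k)$, and the task is to glue these wedges together. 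I would first choose any line $L_0$ through $\vecx$ and split $U$ into $\sigma^+ = U \cap H^+$ and $\sigma^- = U \cap H^-$, the closed half-planes bounded by $L_0$; intersecting the $W_k$ with $H^\pm$ again gives finite unions of wedges lying in a single half-plane, each still inside some $P_i$.

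The heart of the argument is to show $f|\sigma^+ \in AC(\sigma^+)$ (and likewise for $\sigma^-$) by joining its constituent wedges one at a time. Order the wedges of $\sigma^+$ by angle and let $A$ be the union of those processed so far, so that all directions occurring in $A$ lie in an arc $[0,\alpha] \subseteq [0,\pi]$, and let $W$ be the next wedge, occupying directions $[\beta,\gamma]$ with $\beta \ge \alpha$ and $\gamma \le \pi$. The key geometric observation is that $A$ and $W$ can always be separated by a line $L$ through $\vecx$ to which Theorem~\ref{pasting-lemma-ac} applies: if $\beta = \alpha$ (the wedges are adjacent) take $L$ along the shared ray at angle $\alpha$, while if $\beta > \alpha$ (there is an angular gap) take $L$ through the gap at some angle $\phi \in (\alpha,\beta)$. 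In either case, because everything lies in a half-disk one checks that $A$ and $W$ sit in opposite closed half-planes determined by $L$ and that $W \cap L \subseteq A \cap L$ (in the adjacent case both contain the shared ray; in the gap case both meet $L$ only at $\vecx$). After rotating $L$ to the $x$-axis using affine invariance (Theorem~\ref{ac-properties}\ref{aff-inv}), Theorem~\ref{pasting-lemma-ac} gives $f|(A \cup W) \in AC(A \cup W)$, and induction on the number of wedges yields $f|\sigma^+ \in AC(\sigma^+)$.

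Finally I would join $\sigma^+$ and $\sigma^-$ across the original line $L_0$, where the hypothesis of Theorem~\ref{pasting-lemma-ac} is automatic since $\sigma^+ \cap L_0 = U \cap L_0 = \sigma^- \cap L_0$; this produces $f|U = f|(\sigma^+ \cup \sigma^-) \in AC(U)$, and the Patching Lemma then gives $f \in AC(\sigma)$. The one genuinely delicate point is the geometric claim above: that within a half-disk the growing fan $A$ and the incoming wedge $W$ are always separated by a single line through $\vecx$ satisfying the intersection condition of Theorem~\ref{pasting-lemma-ac}. The preliminary split along $L_0$ is exactly what keeps each accumulated fan inside a half-plane (angular span at most $\pi$), without which the separating line through the shared ray could fail to have all of $A$ on one side.
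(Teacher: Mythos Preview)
Your proof is correct and follows essentially the same strategy as the paper's: localize via the Patching Lemma, split the local picture at $\vecx$ into two half-planes, inductively join the pieces within each half-plane using Theorem~\ref{pasting-lemma-ac}, and then join the two halves across the splitting line. The only cosmetic differences are that you work with a small disk and circular sectors where the paper uses a small square and convex polygonal pieces, and that you treat all points uniformly rather than separating into the paper's three cases (interior point, edge point, corner point); your explicit verification of the separating-line hypothesis in the inductive step is in fact more detailed than the paper's, which simply asserts that Theorem~\ref{pasting-lemma-ac} can be applied inductively.
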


\begin{proof} The forward implication is immediate.

Suppose that $f \in AC(P_i)$ for each $i$. We shall use the Patching Lemma to show that $f \in AC(\sigma)$.

Suppose that $\vecx \in \sigma$. There are three mutually exclusive cases to consider.
\begin{enumerate}
  \item $\vecx$ lies in a single set $P_i$. In this case there is certainly compact neighbourhood of $\vecx$ in $\sigma$ on which $f$ is absolutely continuous.
  \item $\vecx$ lies in the boundary of two regions, say $P_1$ and $P_2$, but not at a corner of either of the region. In this case one may choose a small closed disk $D \subseteq P_1 \cup P_2$ centred at $\vecx$ such that $f$ is absolutely continuous on $D \cap P_1$ and on $D \cap P_2$. By Theorem~\ref{pasting-lemma-ac}, $f$ is then absolutely continuous on $D$, which is a compact neighbourhood of $\vecx$ in $\sigma$.
  \item $\vecx$ lies in $m \ge 2$ regions, say $P_1,\dots, P_m$, and it lies at a corner of at least one of these.
Since we are only interested in the local behaviour near $\vecx$, by intersecting with a small square around $\vecx$ we may assume that each region is in fact a polygon. 
     If $\vecx$ lies in $P_i$ but not at a corner, one can split $P_i$ into two smaller polygons each with a corner at $\vecx$, and of course $f$ is absolutely continuous on each of the smaller polygons. 
   We may therefore also assume that $\vecx$ lies at a corner of each of $P_1,\dots,P_m$, and by again splitting a polygon if necessary, we can assume that the polygons can be labelled so that 
   \begin{itemize}
   \item each $P_i$ is convex,
   \item $P_1,\dots,P_{r}$ are in one closed half-plane, and that $P_{r+1},\dots,P_m$ are in the complementary closed half-plane (see Figure~\ref{poly-point}). (Note that it may be that all the polygons lie in a single half-plane.)
   \end{itemize}
 One may now inductively apply Theorem~\ref{pasting-lemma-ac} to show that the restriction of $f$ to $\cup_{i=1}^r P_i$ and $\cup_{i=r+1}^m P_i$ are both absolutely continuous. Applying this theorem one more time shows that $\cup_{i=1}^m P_i$ is a compact neighbourhood of $\vecx$ on which $f$ is absolutely continuous.
 \end{enumerate}
In all cases, there is a compact neighbourhood of $\vecx$ on which $f$ is absolutely continuous, and so $f \in AC(\sigma)$ by the Patching Lemma. 
\end{proof}

  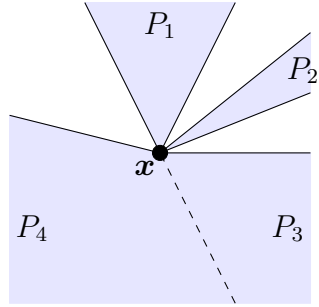
\begin{figure}[ht!]
\begin{center}
\begin{tikzpicture}[scale=1]

\draw[fill,blue!10] (-1,2) -- (0,0) -- (1,2) -- (-1,2);
\draw[black]  (-1,2) -- (0,0) -- (1,2);
\draw[fill,blue!10] (2,1.6) -- (0,0) -- (2,0.8) -- (2,1.6);
\draw[black] (2,1.6) -- (0,0) -- (2,0.8);
\draw[fill,blue!10] (-2,0.5) -- (0,0) -- (2,0) -- (2,-2) -- (-2,-2) -- (-2,0.5);
\draw[black] (-2,0.5) -- (0,0) -- (2,0);

\draw[black,dashed]  (1,-2) -- (0,0); % -- (-1,-2);

\draw (0,1.7) node {$P_1$};
\draw (1.9,1.1) node {$P_2$};
\draw (1.7,-1) node {$P_3$};
\draw (-1.7,-1) node {$P_4$};
\draw[black,fill] (0,0) circle (0.1cm);
\draw (-0.2,-0.2) node {$\vecx$};
\end{tikzpicture}
\end{center}
\caption{Diagram for Theorem~\ref{patching-polygons}. Here the bottom polygon is split in two so that $P_1,P_2,P_3$ lie in one half-plane, and $P_4$ lies in the complementary half-plane.}
\label{poly-point}
\end{figure}

As noted earlier, in the above theorem, it is not possible to bound the norm of $f$ by some absolute constant times the sum of the norms of the restrictions of $f$ to the components $P_i$. On the other hand, one can give a bound which will depend on the geometry of the components.

\begin{theorem}\label{add-triangle}
Let $P \in \PWP$  and let $T$ be a closed triangular whose interior is disjoint from $P$. Let $\sigma = P \cup T$. Then there is a constant $K$ such that if $f: \sigma \to \mC$ then
  \[ \norm{f}_{BV(\sigma)} \le K\bigl( \norm{f}_{BV(P)} + \norm{f}_{BV(T)}\bigr). \]
\end{theorem}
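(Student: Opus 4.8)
The plan is to bound $\norm{f}_{BV(\sigma)} = \norm{f}_\infty + \var(f,\sigma)$ directly from the definition of the variation, exploiting the fact that $T$ is a triangle, i.e.\ a simple $3$-gon, through the variation-factor estimate recorded in Section~\ref{AC-properties}. The supremum-norm term is immediate, since $\norm{f}_\infty = \max(\norm{f|P}_\infty,\norm{f|T}_\infty) \le \norm{f}_{BV(P)} + \norm{f}_{BV(T)}$, so the whole task is to estimate $\var(f,\sigma)$.

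Fix a list $S = [\vecx_0,\dots,\vecx_n]$ of points of $\sigma$ and, exactly as in the proof of Theorem~\ref{join-line}, split $J = \{1,\dots,n\}$ into $J_1 = \{j : \vecx_{j-1},\vecx_j \in T\}$, $J_2 = \{j : \vecx_{j-1},\vecx_j \in P\}$, and $J_3 = J \setminus (J_1 \cup J_2)$, so that $\cvar(f,S) \le \sum_{k=1}^3 \sum_{j \in J_k} |f(\vecx_j) - f(\vecx_{j-1})|$. For the first two sums I would argue as in the proof of \cite[Lemma~4.2]{DLS1}: grouping each $J_k$ (for $k=1,2$) into maximal runs of consecutive indices, each run is a sub-list lying entirely in $T$, respectively in $P$, so its crude variation is at most $\var(f,T)$, respectively $\var(f,P)$, times the variation factor of the run, while the variation factors of the runs sum to at most $\vf(S)$. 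This gives $\sum_{j\in J_1}|f(\vecx_j)-f(\vecx_{j-1})| \le \var(f,T)\,\vf(S)$ and likewise $\sum_{j\in J_2}|f(\vecx_j)-f(\vecx_{j-1})| \le \var(f,P)\,\vf(S)$.

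The new ingredient is the control of the crossing terms. Every $j \in J_3$ has one endpoint in $T \setminus P$ and the other in $P \setminus T$; in particular one endpoint lies in $T$ and the other lies outside $T$. Since $T$ is a simple $3$-gon, the variation-factor fact recalled in Section~\ref{AC-properties} (with $m = 3$) gives $\vf(S) \ge \lceil |J_3|/3\rceil$, hence $|J_3| \le 3\,\vf(S)$. As each summand is at most $2\norm{f}_\infty$, this yields $\sum_{j\in J_3}|f(\vecx_j)-f(\vecx_{j-1})| \le 6\norm{f}_\infty\,\vf(S)$.

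Combining the three estimates gives $\cvar(f,S) \le \bigl(\var(f,T) + \var(f,P) + 6\norm{f}_\infty\bigr)\vf(S)$; dividing by $\vf(S)$ and taking the supremum over all lists $S$ yields $\var(f,\sigma) \le \var(f,T) + \var(f,P) + 6\norm{f}_\infty$. Adding $\norm{f}_\infty$ and using the bound on the supremum norm, I would conclude $\norm{f}_{BV(\sigma)} \le 7\bigl(\norm{f}_{BV(P)} + \norm{f}_{BV(T)}\bigr)$, so that $K = 7$ works (note that the hypothesis $P \in \PWP$ plays no role beyond compactness). The part I expect to require the most care is making the two variation-factor inequalities fully rigorous: the additivity $\sum_{\text{runs}}\vf \le \vf(S)$ underlying the $J_1,J_2$ bounds, and the correct counting of crossing segments for $J_3$ when an endpoint of a segment lies exactly on $\partial T$ rather than in $\intr T$. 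This last point is precisely where the precise formulation of the $3$-gon variation-factor estimate, in terms of the number of connected components of $\gamma_S^{-1}(\ell)$, must be invoked rather than a naive ``crosses the boundary once'' count.
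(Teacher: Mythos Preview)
Your argument is correct and genuinely different from the paper's. The paper does not estimate $\var(f,\sigma)$ directly; instead it extends the three sides of $T$ to split the exterior of $T$ into three sectors $R_1,R_2,R_3$ with apices at the vertices of $T$, and then applies Theorem~\ref{join-sectors} three times (adding $P\cap R_1$, then $P\cap R_2$, then $P\cap R_3$ to $T$), which yields the bound with $K=8^3=512$. Your approach bypasses the sector machinery entirely, running the same splitting-of-indices argument as in Theorem~\ref{join-line} but replacing the single separating line by the three sides of $T$ and invoking the $m$-gon variation-factor inequality with $m=3$ to control $|J_3|$. This is more elementary, gives a sharper constant $K=7$, and makes transparent why the hypothesis $P\in\PWP$ is irrelevant here. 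The paper's route, by contrast, is modular: it reuses Theorem~\ref{join-sectors} as a black box and generalises immediately to adding an arbitrary convex polygon in place of $T$ (as in Corollary~\ref{join-two-polys} and Theorem~\ref{APIC-join-bound}).

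On the subtlety you flag: it is real but easily dispatched. For a fixed list $S$ the finitely many points of $S$ lying in $P\setminus T$ are all at strictly positive distance from $T$, so one may dilate $T$ slightly to a triangle $T'$ with $T\subset\intr(T')$ and every such point still outside $T'$; each $J_3$-segment then has one endpoint in $\intr(T')$ and one outside $T'$, and the $3$-gon estimate applied to $T'$ gives $\vf(S)\ge\lceil|J_3|/3\rceil$ cleanly.
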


\begin{proof}
Extending the sides of $T$ allows us to split the exterior of $T$ into 3 sectors $R_1,R_2,R_3$ whose vertices lie at vertices of $T$, as in Figure~\ref{add-triangle-fig}. Applying Theorem~\ref{join-sectors} shows that
  \[ \norm{f}_{BV(T \cap R_1)} \le 8 \bigl(\norm{f}_{BV(T)} + \norm{f}_{BV(R_1 \cap P)} \bigr)
  \le 8 \bigl(\norm{f}_{BV(T)} + \norm{f}_{BV(P)} \bigr). \]
Applying Theorem~\ref{join-sectors} twice more shows that
  \[ \norm{f}_{BV(\sigma)} \le 8^3\bigl(\norm{f}_{BV(T)} + \norm{f}_{BV(P)} \bigr) \]
as required. 
\end{proof}

\begin{comment}
Extending the sides of $T$ splits the plane into 7 closed regions, namely $T$ and unbounded regions $R_1,\dots,R_6$ arranged in order as in Figure~\ref{add-triangle-fig}. Choose $R_1$ so that $T$ and $R_1$ are separated by one of the sides of $T$. Letting $\sigma_1 = T \cup (R_1 \cap P)$, Theorem~\ref{join-line}  gives that
  \[ \norm{f}_{BV(\sigma_1)} \le 2 \bigl(\norm{f}_{BV(T)} + \norm{f}_{BV(R_1 \cap P)} \bigr)
                          \le 2 \bigl(\norm{f}_{BV(T)} + \norm{f}_{BV(P)} \bigr). \]
Now setting $\sigma_k = \sigma_{k-1} \cup (R_k \cap P)$ for $k=2,\dots,6$, we can inductively apply Theorem~\ref{join-sectors} to see that
  \[  \norm{f}_{BV(\sigma_k)} \le 8 \bigl(\norm{f}_{BV(\sigma_{k-1})} + \norm{f}_{BV(R_k \cap P)} \bigr)
                   \le  8 \bigl(\norm{f}_{BV(\sigma_{k-1})} + \norm{f}_{BV(P)}\bigr). \]
Since $\sigma = \sigma_6$ this gives that $\norm{f}_{BV(\sigma)} \le 2^{16} \bigl(\norm{f}_{BV(T)} + \norm{f}_{BV(P)} \bigr)$.
%\end{proof}
\end{comment}

\begin{figure}
\begin{center}
\begin{tikzpicture}[scale=1]

\draw[fill,green!10] (0,0) -- (4,0) -- (4,1) -- (1,1) -- (1,3.5) -- (3.6,3.5) -- (3.6,3) -- (2.5,3) -- (2.5,2.7) -- (4,2.7) -- (4,4) -- (0,4) -- (0,0);
\draw[green]   (0,0) -- (4,0) -- (4,1) -- (1,1) -- (1,3.5) -- (3.6,3.5) -- (3.6,3) -- (2.5,3) -- (2.5,2.7) -- (4,2.7) -- (4,4) -- (0,4) -- (0,0);

\draw[fill,blue!10] (2.5,3.5) -- (1,2) -- (2.5,1) -- (2.5,3.5);
\draw[blue] (2.5,3.5) -- (1,2) -- (2.5,1) -- (2.5,3.5); 

\draw[red,dashed] (2.5,-0.7) -- (2.5,3.5); %(2.5,-0.7) -- (2.5,4.7);
\draw[red,dashed] (-0.7,3.13333) -- (2.5,1); %(-0.7,3.13333) -- (4.7,-0.46667);
\draw[red,dashed] (1,2) -- (3.8,4.8); %(-0.7,0.3) -- (3.8,4.8);

\draw (2,2.1) node {$T$};
\draw(0.5,3.5) node {$P$};

\draw (2,4.5) node {$R_1$};

\draw (4,2) node {$R_2$};

\draw (-0.6,1) node {$R_3$};
\end{tikzpicture}
\caption{Joining a triangle to a polygonal region.}\label{add-triangle-fig}
\end{center}
\end{figure}
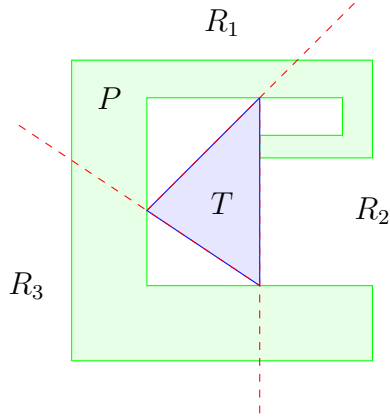

\begin{corollary}\label{join-two-polys}
Suppose that $P_1 \in \PWP$ and that $P_2$ is a closed polygonal region in the plane whose interior is disjoint from $P_1$, Let $\sigma = P_1 \cup P_2$. Then there exists a constant $K(P_1,P_2)$ such that if $f: \sigma \to \mC$ then
  \[ \norm{f}_{BV(\sigma)} \le K(P_1,P_2) \bigl(\norm{f}_{BV(P_1)}+ \norm{f}_{BV(P_2)} \bigr). \]
\end{corollary}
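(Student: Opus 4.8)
The plan is to reduce the general polygonal region $P_2$ to the triangular case already settled in Theorem~\ref{add-triangle} by triangulating $P_2$ and attaching the triangles to $P_1$ one at a time. Since $P_2$ is a closed polygonal region its boundary is a simple polygon, so $P_2$ admits a triangulation into finitely many triangles $T_1,\dots,T_m$ with pairwise disjoint interiors and $P_2 = \bigcup_{k=1}^m T_k$. I would set $\sigma_0 = P_1$ and $\sigma_k = \sigma_{k-1} \cup T_k$ for $1 \le k \le m$, so that $\sigma_m = P_1 \cup P_2 = \sigma$, and then apply Theorem~\ref{add-triangle} at each stage to pass from $\sigma_{k-1}$ to $\sigma_k$.

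The first point to check is that Theorem~\ref{add-triangle} is in fact applicable at every stage, that is, that $\sigma_{k-1} \in \PWP$ and that $\intr(T_k)$ is disjoint from $\sigma_{k-1}$. For the former, one notes that $\PWP$ is closed under finite unions (each triangle $T_k$ being trivially a member), so each $\sigma_{k-1} = P_1 \cup T_1 \cup \cdots \cup T_{k-1}$ again lies in $\PWP$. For the latter, $\intr(T_k) \subseteq \intr(P_2)$ is disjoint from $P_1$ by hypothesis; moreover an interior point of $T_k$ cannot lie in $T_j$ for $j<k$, since otherwise a small ball about it would be contained in $\intr(T_k)$ yet meet $\intr(T_j)$, contradicting that the triangles of the triangulation have disjoint interiors. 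Hence $\intr(T_k) \cap \sigma_{k-1} = \emptyset$, and Theorem~\ref{add-triangle} yields a constant $K$ (in fact the universal value $8^3$ coming from its proof) with
  \[ \norm{f}_{BV(\sigma_k)} \le K\bigl(\norm{f}_{BV(\sigma_{k-1})} + \norm{f}_{BV(T_k)}\bigr) \]
for each $k$.

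It then remains to unwind the recursion. Since $T_k \subseteq P_2$, the monotonicity of the $BV$ norm under restriction (the supremum and the variation can only decrease on a subset, as in Theorem~\ref{ac-properties}\ref{restr}) gives $\norm{f}_{BV(T_k)} \le \norm{f}_{BV(P_2)}$. Writing $a_k = \norm{f}_{BV(\sigma_k)}$, $A = \norm{f}_{BV(P_1)}$ and $B = \norm{f}_{BV(P_2)}$, the relation $a_k \le K(a_{k-1} + B)$ with $a_0 = A$ unwinds to $a_m \le K^m A + (K + K^2 + \cdots + K^m)B$, whence
  \[ \norm{f}_{BV(\sigma)} = a_m \le \Bigl(K^m + \tfrac{K(K^m-1)}{K-1}\Bigr)\bigl(\norm{f}_{BV(P_1)} + \norm{f}_{BV(P_2)}\bigr), \]
so the claim holds with $K(P_1,P_2) = K^m + \tfrac{K(K^m-1)}{K-1}$, which depends only on the number $m$ of triangles in a triangulation of $P_2$. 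There is no serious obstacle here: the only points requiring care are the verification that each intermediate set $\sigma_{k-1}$ stays in $\PWP$ so that Theorem~\ref{add-triangle} applies, and the elementary bookkeeping showing that the compounding constants, although growing geometrically in $m$, remain finite and controlled purely by $P_2$.
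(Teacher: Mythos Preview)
Your proof is correct and follows essentially the same route as the paper's: triangulate $P_2$, then apply Theorem~\ref{add-triangle} inductively, adjoining one triangle at a time to the growing $\PWP$ set, and finally bound each $\norm{f}_{BV(T_k)}$ by $\norm{f}_{BV(P_2)}$. You are in fact more careful than the paper in checking that the hypotheses of Theorem~\ref{add-triangle} hold at each stage, and your unwound constant is marginally sharper than the paper's cruder bound $K^m\bigl(\norm{f}_{BV(P_1)} + m\,\norm{f}_{BV(P_2)}\bigr)$.
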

  
\begin{proof} 
Triangulate $P_2$ as $\cup_{k=1}^n T_k$. Applying the previous theorem repeatedly then shows that
  \begin{align*} \norm{f}_{BV(\sigma)}
   & \le K^n \bigl(\norm{f}_{BV(P_1)} + \norm{f}_{BV(T_1)} + \dots + \norm{f}_{BV(T_n)} \bigr) \\
   & \le  K^n \bigl(\norm{f}_{BV(P_1)} + n \norm{f}_{BV(P_2)}\bigr). \qedhere
   \end{align*}
\end{proof} 

 One can of course extend this result to deal with any finite number of polygons. As can be seen from the proof, the constant  $K(P_1,P_2)$ can be chosen  so that it only depends on the minimum number of triangles needed to triangulate one of the the polygons. (The constant needs to depend at least linearly in this number, but it seems unlikely that it needs the dependence given in the proof.)

The next result says that if we can extend a function to be absolutely continuous on the `holes' of a $\PWP$ set, then the function will be absolutely continuous on the filled in set.

\begin{theorem}\label{fill-in-pwp} 
Suppose that $\sigma_0 \in \PWP$. Let $W_1,\dots,W_d$ denote the bounded components of the complement of $\sigma_0$, and let $\sigma  = \sigma_0 \cup \bigl(\cup_{i=1}^d \cl(W_i)\bigr)$. Then $f \in AC(\sigma)$ if and only if $f| \sigma_0 \in AC(\sigma_0)$ and $f|\cl(W_i) \in AC(\cl(W_i))$ for $1 \le i \le d$. 
\end{theorem}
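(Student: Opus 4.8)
The plan is to prove the two implications separately. The forward implication is immediate: if $f \in AC(\sigma)$ then, since $\sigma_0$ and each $\cl(W_i)$ are compact subsets of $\sigma$, Theorem~\ref{ac-properties}\ref{restr} gives at once that $f|\sigma_0 \in AC(\sigma_0)$ and $f|\cl(W_i) \in AC(\cl(W_i))$ for each $i$.

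For the converse I would invoke the Patching Lemma (Theorem~\ref{patching-lemma}), producing for each $\vecx \in \sigma$ a compact neighbourhood on which $f$ is absolutely continuous. Since there are only finitely many windows and each $\cl(W_i)$ is closed, the points of $\sigma$ fall into three types. If $\vecx \notin \bigcup_i \cl(W_i)$, then $\vecx$ has positive distance to every $\cl(W_i)$, so for small $r$ the set $U_\vecx = \sigma \cap \overline{B}(\vecx,r)$ lies entirely in $\sigma_0$ and $f|U_\vecx \in AC(U_\vecx)$ by restriction; this case disposes of all of $\sigma_0$ away from the holes, including any line-segment ``antennas'' and the points at which they meet the two-dimensional parts of $\sigma$. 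If $\vecx \in W_i$ for some $i$, then $\vecx$ lies in the open set $W_i$, so a small closed ball about $\vecx$ lies in $\cl(W_i)$ and again restriction gives $f|U_\vecx \in AC(U_\vecx)$.

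The only remaining points are those with $\vecx \in \sigma_0 \cap \bigcup_i \partial W_i$, that is, points lying on the boundary of one or more holes. The key geometric observation here is that, since $\sigma_0 \in \PWP$ and each $W_i$ is a genuine bounded complementary component, $\sigma$ is two-dimensional in a neighbourhood of $\vecx$: across each boundary segment the filled hole on one side abuts either a solid piece of $\sigma_0$ or another filled hole. Intersecting with a sufficiently small closed square $Q$ centred at $\vecx$, the boundary polygons of the holes passing through $\vecx$ cut $Q \cap \sigma$ into finitely many polygonal regions with disjoint interiors, each of which is contained either in $\sigma_0$ or in a single $\cl(W_j)$. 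By restriction $f$ is absolutely continuous on each such region, so Theorem~\ref{patching-polygons} shows that $f|(Q \cap \sigma) \in AC(Q \cap \sigma)$, and $Q \cap \sigma$ is a compact neighbourhood of $\vecx$. The Patching Lemma then yields $f \in AC(\sigma)$.

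I expect the main obstacle to be this last case, and specifically the verification of local two-dimensionality together with the construction of the polygonal subdivision at a vertex where several holes and solid regions meet. This is essentially the corner analysis already carried out in case~(3) of the proof of Theorem~\ref{patching-polygons}, so the cleanest route is to reduce to that theorem on the square $Q$ rather than to re-run the iterated application of Theorem~\ref{pasting-lemma-ac} by hand; the residual work is the bookkeeping needed to confirm that every piece of the subdivision sits inside exactly one of the given sets $\sigma_0, \cl(W_1),\dots,\cl(W_d)$.
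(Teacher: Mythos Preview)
Your argument works, but it takes a longer road than the paper. The paper observes that $\sigma$ itself is a finite union of closed polygonal regions, so one can triangulate $\sigma_0$ (its two-dimensional parts) and each $\cl(W_i)$ compatibly; every triangle in the resulting decomposition then lies entirely inside $\sigma_0$ or inside a single $\cl(W_i)$, so $f$ is absolutely continuous on each triangle by restriction, and a single global application of Theorem~\ref{patching-polygons} finishes the proof. There is no need to invoke the Patching Lemma or to run a case analysis point by point, since that work is already encapsulated in the proof of Theorem~\ref{patching-polygons}.

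Your route---Patching Lemma plus a local polygonal subdivision at boundary points---is essentially re-doing the internal argument of Theorem~\ref{patching-polygons} on small squares and then quoting the theorem anyway. It is valid, but note one inaccuracy in your geometric description: it is \emph{not} true that across each boundary segment of a hole the other side must be a solid piece of $\sigma_0$ or another filled hole. For instance, if $\sigma_0 = \partial P$ for a single polygon $P$ (a legitimate $\PWP$ set), then at every point of $\partial W_1 = \partial P$ the other side is the unbounded complementary component, which is not in $\sigma$ at all. Your argument survives this, because Theorem~\ref{patching-polygons} does not require $Q \cap \sigma$ to fill $Q$; but you should drop the ``two-dimensional neighbourhood'' claim and simply say that $Q \cap \sigma$ decomposes into finitely many closed polygonal pieces, each contained in $\sigma_0$ or in some $\cl(W_j)$. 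Once phrased that way, the cleanest move is exactly the paper's: do the polygonal decomposition once, globally, rather than locally around each point.
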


\begin{proof} One only needs to prove the converse direction. Suppose then that $f| \sigma_0 \in AC(\sigma_0)$ and $f|\cl(W_i) \in AC(\cl(W_i))$ for $1 \le i \le d$. 
One can triangulate $\sigma_0$ and each of the sets $W_i$. The set $\sigma$ is then a union of triangles and the restriction of $f$ to each of these is absolutely continuous so the result follows from Theorem~\ref{patching-polygons}.
\end{proof}

At this point we have enough to show that if $\sigma_0$ is a polygonal region with finitely many polygonal windows, then, using Theorem~\ref{fill-poly}, we can extend the function to those windows and hence produce an absolutely continuous function on the `filled-in' set. An alternative proof of this result may be found in \cite[Theorem~6.4.2]{St}, where an explicit bound on the norm of the extension is given in terms of the total number of edges on the polygonal region and its windows. 

It is worth noting that in all the known examples where a joined function fails to be absolutely continuous, the issue has been a failure to control the variation norm. This leads to the following open question.

\begin{question}
If $f_1 \in AC(\sigma_1)$ and $f_2 \in AC(\sigma_2)$ and $f \in BV(\sigma)$, must we have $f \in AC(\sigma)$?
\end{question}

%%%%%%%%%%%%%%%%%%%%%%%%%%%%%%%%%%%%%%%%%%%%
%%%%%%%%%%%%%%%%%%%%%%%%%%%%%%%%%%%%%%%%%%%%

\section{An $\APIC$ joining theorem}\label{S:apic-join}

Theorem~\ref{pasting-lemma-ac} is strong enough to prove that a function $f$ defined on a $\PIC$ set $\sigma = \cup_{k=1}^n c_k$ is absolutely continuous if and only if it is absolutely continuous on each of its component curves. Our next task is to extend this result to the class of $\APIC$ sets. 
Note that as is shown in Theorem~6.2.1 of \cite{St}, if $c_1 = \{(x,x^2) \st 0 \le x \le 1\}$ and $c_2 = \{(x,2x^2) \st 0 \le x \le 1\}$, and $\sigma = c_1 \cup c_2$ then one can find a function which is absolutely continuous on each of the components, but which is not absolutely continuous on $\sigma$, so one cannot extend this result to arbitrary unions of convex curves. (Of course, this doesn't imply that one can't extend absolutely continuous functions from such sets.)

We begin by giving showing that one can join functions of bounded variation on different components of an $\APIC$ set.

\begin{lemma}\label{APIC-bound-lemma}
Let $P$ be a convex $N$-gon with sides $\ell_1,\dots,\ell_N$ and let $c$ be a convex curve in $P$. Suppose that $\calC$ is a nonempty subset of $\{c,\ell_1,\dots,\ell_N\}$ and let $\sigma = \cup\{\tau \st \tau \in \calC\}$. 
If $f: \sigma \to \mC$ then 
  \[
  \max \bigl\{\norm{f}_{BV(\tau)} \st \tau \in \calC \bigr\}
      \le \norm{f}_{BV(\sigma)} 
      \le K_N \Bigl( 
         \sum_{\tau \in \calC} \norm{f}_{BV(\tau)} \Bigr).
      \]
\end{lemma}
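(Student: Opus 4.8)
The left-hand inequality is immediate: for each $\tau \in \calC$ we have $\tau \subseteq \sigma$, and restricting the lists of points appearing in the definitions of $\var(\cdot)$ and of the supremum norm to lists lying in $\tau$ shows $\norm{f}_{BV(\tau)} \le \norm{f}_{BV(\sigma)}$ (this is the content of Theorem~\ref{ac-properties}\ref{restr}). Taking the maximum over $\tau$ gives the lower bound, so the work is entirely in the right-hand inequality. The plan is to mimic the proof of Theorem~\ref{join-line}, working directly with a finite list $S = [\vecx_0,\dots,\vecx_n]$ of points of $\sigma$ and the quotient $\cvar(f,S)/\vf(S)$.

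Given such a list, I would classify each segment $\ls[\vecx_{j-1},\vecx_j]$ as \emph{internal} if some single component $\tau \in \calC$ contains both of its endpoints, and \emph{transitional} otherwise. For the internal segments the argument of \cite[Lemma~4.2]{DLS1} used in the proof of Theorem~\ref{join-line} applies componentwise: for each fixed $\tau$, the sum of $|f(\vecx_j)-f(\vecx_{j-1})|$ over those $j$ with both endpoints in $\tau$ is at most $\var(f,\tau)\,\vf(S)$. Summing over the (at most $N+1$) components gives a contribution of at most $\bigl(\sum_{\tau} \var(f,\tau)\bigr)\vf(S)$. For a transitional segment we can only use the crude bound $|f(\vecx_j)-f(\vecx_{j-1})| \le 2\max_{\tau}\norm{f}_{\infty,\tau} \le 2\sum_{\tau}\norm{f}_{BV(\tau)}$, so everything hinges on showing that the \emph{number} of transitional segments is at most $C_N\,\vf(S)$ for a constant depending only on $N$.

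This counting is the heart of the proof. The key observation is that, since the only component of $\sigma$ that is not a side of $P$ is the single curve $c$, every transitional segment has at least one endpoint on some side $\ell_a$ (a segment with both endpoints on $c$ is internal). Thus it suffices to bound, for each side $\ell_a \in \calC$, the number of transitional segments incident to $\ell_a$, and here I would exploit convexity. Each $\ell_a$ lies on a line $L_a$ with all of $P$, hence all of $\sigma$, in one closed half-plane bounded by $L_a$. Consequently the path $\gamma_S$ meets $L_a$ only when it lies on $\ell_a$ (apart from the finitely many vertices, such as the endpoints of $c$, that might accidentally lie on $L_a$), and every transitional segment incident to $\ell_a$ corresponds to the path arriving at $\ell_a$ from the interior of $P$ and then leaving again. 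Each such arrival–departure is a distinct connected component of $\gamma_S^{-1}(L_a)$, so the number of transitional segments incident to $\ell_a$ is at most $2\,\vf(S,L_a) \le 2\,\vf(S)$. Summing over the at most $N$ sides gives the required bound with $C_N = 2N$.

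Combining the two estimates yields $\cvar(f,S)/\vf(S) \le (1+2C_N)\sum_{\tau}\norm{f}_{BV(\tau)}$; taking the supremum over $S$ and adding the trivially controlled sup-norm term gives the right-hand inequality with $K_N$ of order $N$. The step I expect to be delicate is the final counting claim: turning the informal statement ``each excursion to $\ell_a$ contributes one component of $\gamma_S^{-1}(L_a)$'' into a rigorous inequality requires care with runs of consecutive list points lying on $L_a$, and, more seriously, with degenerate configurations in which a vertex of $P$ or an endpoint of $c$ happens to lie on the extension $L_a$ of a non-adjacent side. I would handle the latter either by a small generic-position perturbation or by absorbing the finitely many exceptional visits into the constant. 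It is worth emphasising that the interior curve $c$ needs no line of its own: the nesting of $c$ inside the polygon, which obstructs any attempt to prove the bound by repeatedly splitting $\sigma$ across a line and invoking Theorem~\ref{join-line}, causes no difficulty in this direct approach precisely because every transitional segment touching $c$ also touches a side.
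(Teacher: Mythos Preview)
Your direct counting argument is correct, and in fact the degeneracy worries you raise at the end are unfounded here: since $P$ is convex, the line $L_a$ through the side $\ell_a$ is a supporting line of $P$, so $\sigma \cap L_a = \ell_a$ exactly, and no vertex of $P$ other than the two endpoints of $\ell_a$ can lie on $L_a$. Thus each maximal run of list points on $L_a$ is precisely a visit to $\ell_a$, and the bound (number of transitional segments incident to $\ell_a$) $\le 2\,\vf(S,L_a)$ goes through without perturbation.

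However, your final paragraph misdiagnoses the situation, and the paper exploits exactly the approach you dismiss. The nesting of $c$ inside $P$ does \emph{not} obstruct an iterated application of Theorem~\ref{join-line}: one simply takes $\tau_0 = c$ (if $c \in \calC$) and then adjoins the sides $\tau_1,\dots,\tau_r$ one at a time. At stage $j$ the set $\sigma_{j-1}$ already built lies in the closed half-plane bounded by the supporting line $L$ through $\tau_j$, while $\tau_j \subseteq L$; moreover $\sigma_{j-1} \cap L$ is contained in the two endpoints of $\tau_j$, so the containment hypothesis of Theorem~\ref{join-line} is satisfied and one gets $\norm{f}_{BV(\sigma_j)} \le 2\bigl(\norm{f}_{BV(\sigma_{j-1})} + \norm{f}_{BV(\tau_j)}\bigr)$. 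This is the paper's proof in its entirety. What your approach buys is a better constant: the inductive route gives $K_N$ of order $2^N$, whereas your direct estimate yields $K_N$ linear in $N$. What the paper's route buys is brevity and the avoidance of any new variation-factor bookkeeping.
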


\begin{proof}
The left hand inequality is clear, so it remains to establish the equality on the right. 
Label the sets in $\calC$ as $\tau_0,\dots,\tau_r$, where, if $c \in \calC$ we choose $\tau_0 = c$.

Let $\sigma_0 = \tau_0$ and for $j = 1,\dots, r$ let $\sigma_j = \sigma_{j-1} \cup \tau_j$. Since $P$ is convex, one may apply Theorem~\ref{join-line} to show that for each $j$
  \[ \norm{f}_{BV(\sigma_j)} \le 2 \bigl(\norm{f}_{BV(\sigma_{j-1})} + \norm{f}_{BV(\tau_j)} \bigr) \]
from which the result follows easily.
\end{proof}

\begin{theorem}\label{APIC-join-bound}
Suppose that $\sigma \in \APIC$. Let $\calC$ denote the components of $\sigma$ with respect to a polygonal mosaic $\calM$. 
%chosen so that all convex curves are projectable. 
Then there exists $K_{\sigma,\calM}$ such that if $f: \sigma \to \mC$ then
  \[ \norm{f}_{BV(\sigma)} \le 
    K_{\sigma,\calM} \sum_{\tau \in \calC} \norm{f}_{BV(\tau)} . \]
\end{theorem}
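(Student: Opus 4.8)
The plan is to prove the estimate in two stages: first reduce the global bound to one involving only the sets $C_i = \sigma \cap P_i$, and then join these across the mosaic $\calM = \{P_i\}_{i=1}^M$.

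For the first stage, fix $i$ and note that $C_i$ is exactly a nonempty subcollection of $\{c_i,\ell_{i,1},\dots,\ell_{i,N_i}\}$, namely the convex curve in $P_i$ (if present) together with some of the sides of $P_i$. Hence Lemma~\ref{APIC-bound-lemma} applies and gives $\norm{f}_{BV(C_i)} \le K_{N_i}\sum_{\tau\subseteq C_i}\norm{f}_{BV(\tau)}$. Since a curve component lies in a single $P_i$ and a segment component is a side shared by at most two of the mosaic polygons, each $\tau\in\calC$ occurs in at most two of the sets $C_i$; summing over $i$ therefore yields $\sum_i\norm{f}_{BV(C_i)} \le 2(\max_i K_{N_i})\sum_{\tau\in\calC}\norm{f}_{BV(\tau)}$. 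It thus suffices to find a constant $K'$, depending only on $\calM$, such that $\norm{f}_{BV(\sigma)} \le K'\sum_{i=1}^M\norm{f}_{BV(C_i)}$.

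For the second stage I would argue by induction along the mosaic. Since $\bigcup_i P_i$ is connected, order the polygons so that $G_k=\bigcup_{j\le k}P_j$ is connected for each $k$, and set $\Sigma_k=\sigma\cap G_k=\bigcup_{j\le k}C_j$. The goal is the inductive bound $\norm{f}_{BV(\Sigma_k)} \le c\,(\norm{f}_{BV(\Sigma_{k-1})}+\norm{f}_{BV(C_k)})$, where $c$ depends only on the maximal number of sides of a polygon in $\calM$; iterating this $M$ times gives the desired $K'$. The key geometric fact for the inductive step is that $C_k\subseteq P_k$, while $\Sigma_{k-1}\subseteq G_{k-1}$ meets $P_k$ only in $\partial P_k$, because the mosaic polygons have pairwise disjoint interiors. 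Exploiting the convexity of $P_k$, I would separate $C_k$ from $\Sigma_{k-1}$ exactly as in the proof of Theorem~\ref{add-triangle}: extend the sides of $P_k$ to partition the exterior of $P_k$ into finitely many closed sectors $R_1,\dots,R_s$ with apices at the vertices of $P_k$, and peel off the pieces $R_m\cap\Sigma_{k-1}$ one at a time. Starting from $A_0=C_k$ and setting $A_m=A_{m-1}\cup(R_m\cap\Sigma_{k-1})$, each $A_{m-1}$ lies in the sector complementary to $R_m$ (being contained in $P_k$ together with the previously peeled sectors), so Theorem~\ref{join-sectors} gives $\norm{f}_{BV(A_m)}\le 8(\norm{f}_{BV(A_{m-1})}+\norm{f}_{BV(\Sigma_{k-1})})$. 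After $s$ steps $A_s=\Sigma_k$, and unrolling the recursion produces the inductive bound with $c\le 2\cdot 8^{s}$.

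The genuine difficulties here are geometric rather than analytic. I must verify that the exterior of a convex $N$-gon really can be written as a finite union of closed sectors to which Theorem~\ref{join-sectors} applies, and that at each peeling step the two sets lie in honestly complementary sectors and satisfy the boundary-agreement hypothesis $\sigma_1\cap B=\sigma_2\cap B$. The latter is arranged by taking the sectors closed, so that both the accumulated set $A_{m-1}$ and the new piece $R_m\cap\Sigma_{k-1}$ meet the shared boundary $B$ in exactly $\sigma\cap B$; the crux is confirming that $A_{m-1}$ never enters the interior of $R_m$, which is where convexity of $P_k$ and the disjointness of the sectors are used. Everything else is the bookkeeping of constants, which merely inflates $K_{\sigma,\calM}$ (in terms of $M$ and the edge counts $N_i$) and need not be optimized.
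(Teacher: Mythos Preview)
Your proposal is correct and follows essentially the same route as the paper's own proof: both argue by induction along the mosaic polygons, use Lemma~\ref{APIC-bound-lemma} to control each $C_i=\sigma\cap P_i$ in terms of its component curves and segments, and carry out the inductive step by decomposing the exterior of the convex polygon $P_k$ into finitely many sectors and applying Theorem~\ref{join-sectors} once per sector (exactly as in the proof of Theorem~\ref{add-triangle}). Your two-stage presentation and your explicit attention to the boundary hypothesis of Theorem~\ref{join-sectors} make the argument a bit more detailed than the paper's, but the underlying mechanism is the same; note incidentally that the connectedness ordering of the $P_i$ is not actually needed for the norm estimate, since the sector-peeling argument works regardless of whether the partial unions $G_k$ are connected.
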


\begin{proof}
Let $\calM = \{P_i\}_{i=1}^n$ be the polygonal mosaic. %corresponding to the above decomposition of $\sigma$ into its components.  
As above, for $j = 1,\dots,n$, let $\sigma_j = \cup_{i=1}^j (P_i \cap \sigma) $ and let $\calC_j$ denote the components of $\sigma$ which lie in $\sigma_j$. The previous lemma implies that
$\norm{f}_{BV(\sigma_1)}$ is bounded by a constant $K_1$ (depending on the number of sides of $P_1$) times the sum of the norms of $f$ on the components of $\sigma$ in $\calC_1$. 

Suppose now that $1 \le j < n$ and that
  \begin{equation}\label{apic-induction}
     \norm{f}_{BV(\sigma_j)} \le K_j \sum_{s \in \calC_j} \norm{f}_{BV(s)}. 
  \end{equation}
Let $r$ denote the number of sides of $P_{j+1}$. Extending the sides of $P_{j+1}$ one can split the exterior of $P_{j+1}$ into $r$ regions $R_1,\dots,R_r$, each contained in a sector whose vertex is one of the vertices of the polygon, as in Figure~\ref{apic-bound-pic}.

As in the proof of Theorem~\ref{add-triangle}, one can now repeatedly apply Theorem~\ref{join-sectors} to show that 
  \[ \norm{f}_{BV(\sigma_{k+1})} \le 8^r \big( \norm{f}_{BV(\sigma_k)} + \norm{f}_{BV(\sigma \cap P_{k+1})} \bigr). \]
Applying Lemma~\ref{APIC-bound-lemma} and using (\ref{apic-induction}) gives the result.
\end{proof}

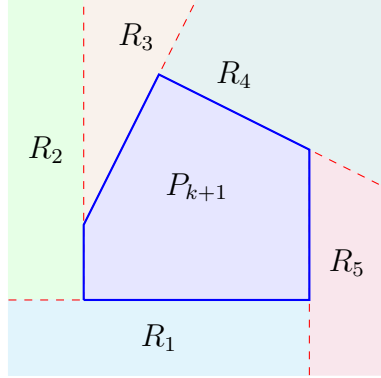
\begin{figure}
\begin{center}
\begin{tikzpicture}[scale=1]

\draw[fill,green!10] (0,1) -- (1,1) -- (1,5) -- (0,5) -- (0,1);
\draw[fill,brown!10] (1,2) -- (2.5,5) -- (1,5) -- (1,2);
\draw[fill,teal!10] (2,4) -- (5,2.5) -- (5,5) -- (2.5,5) -- (2,4);
\draw[fill,purple!10] (4,3) -- (4,0) -- (5,0) -- (5,2.5) -- (4,3);
\draw[fill,cyan!10] (4,1) -- (0,1) -- (0,0) -- (4,0) -- (4,1);

\draw[fill,blue!10] (1,1) -- (4,1) -- (4,3) -- (2,4) -- (1,2);
\draw[blue]   (1,1) -- (4,1) -- (4,3) -- (2,4) -- (1,2);

\draw[red,dashed] (0,1) -- (4,1);
\draw[red,dashed] (4,0) -- (4,3);
\draw[red,dashed] (2,4) -- (5,2.5);
\draw[red,dashed] (1,2) -- (2.5,5);
\draw[red,dashed] (1,1) -- (1,5);
\draw[blue,thick]   (1,1) -- (4,1) -- (4,3) -- (2,4) -- (1,2) -- (1,1);

\draw (2.5,2.5) node {$P_{k+1}$};
\draw (2,0.5) node {$R_1$};
\draw (0.5,3) node {$R_2$};
\draw (1.7,4.5) node {$R_3$};
\draw (3,4) node {$R_4$};
\draw (4.5,1.5) node {$R_5$};
\end{tikzpicture}
\caption{Splitting the exterior of a convex polygon into sectors $R_1,\dots,R_r$.}\label{apic-bound-pic}
\end{center}
\end{figure}

Our next step is to ensure that a function $f$ defined on an $\APIC$ set is absolutely continuous if and only if it is absolutely continuous on each component curve. Recall from Example~\ref{bad-join} that this result fails if one considers more general finite unions of convex curves.

\begin{lemma}\label{LT-vertex-join}
Suppose that $T$ is a triangle in $\mR^2$ with vertices at $\vecx$, $\vecy$ and $\vecz$. Let $c$ be a differentiable convex curve joining $\vecx$ to $\vecy$ through the interior of $T$ which meets the side $\ell = \ls[\vecx,\vecz]$ tangentially at $\vecx$.
Let $\sigma = c \cup \ell$ and suppose that $f: \sigma \to \mC$. If $f|c \in AC(c)$ and $f|\ell \in AC(\ell)$ then $f \in AC(\sigma)$.
Furthermore $f$ has an extension $\hat f \in AC(T)$ with 
  \[ \ssnorm{\hat{f}}_{BV(T)} \leq 3 \norm{f}_{BV(\sigma)}. \]
\end{lemma}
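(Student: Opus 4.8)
The plan is to prove the two assertions in turn: the absolute continuity follows quickly from the line version of the pasting lemma, while the norm-controlled extension is obtained by filling in the two regions into which $c$ divides $T$.

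For the absolute continuity I would reduce to the setting of Theorem~\ref{pasting-lemma-ac} by an affine change of coordinates, using the isometric affine invariance of Theorem~\ref{ac-properties}\ref{aff-inv}. Choose an invertible affine map sending the line through $\ell$ to the $x$-axis $L$, so that $\ell\subseteq L\subseteq H^+$. Since $c$ is convex with tangent line $L$ at $\vecx$, this line is a supporting line of $c$, so $c$ lies in one closed half-plane determined by $L$; composing with a reflection in $L$ if necessary I may assume $c\subseteq H^-$. Because $c$ passes through the interior of $T$ and $\vecy\notin L$, the curve meets $L$ only at $\vecx$, whence $c\cap L=\{\vecx\}\subseteq \ell\cap L$. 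Taking $\sigma_1=\ell$ and $\sigma_2=c$, the hypotheses of Theorem~\ref{pasting-lemma-ac} hold, and since $f|\ell\in\AC(\ell)$ and $f|c\in\AC(c)$ that theorem gives $f\in\AC(\sigma)$, together with the estimate $\normbv{f}\le 2\bigl(\norm{f|\ell}_{\BV(\ell)}+\norm{f|c}_{\BV(c)}\bigr)$.

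For the extension I would keep $\ell$ on the $x$-axis and $c$ in the closed upper half-plane, tangent to $L$ at the origin $\vecx$, so that $T$ lies in $H^+$. The curve $c$ splits $T$ into the region $A$ between $c$ and the side $\ls[\vecx,\vecy]$, and the region $B$ bounded by $c$, $\ell$ and $\ls[\vecy,\vecz]$; only $B$ carries the line datum, and only $B$ contains the tangential cusp at $\vecx$. On $A$ I would apply Lemma~\ref{from-c-to-T} directly to $(T,c)$ to extend $f|c$ to a function that is absolutely continuous on $T$ with $\BV$-norm at most $2\norm{f|c}_{\BV(c)}$, and restrict it to $\overline{A}$. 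On $B$ I would build an extension that agrees with $f$ on both $c$ and $\ell$ — for instance, near the cusp, the transfinite interpolation that along each vertical fibre $x=\text{const}$ interpolates linearly between the value $f|\ell$ at $y=0$ and the value $f|c$ on $c$ — arranged to agree with the $A$-extension along $c$. Since the two pieces coincide with $f$, hence with one another, along $c$, joining them yields $\hatf$ with $\hatf|\sigma=f$.

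The hard part will be region $B$, that is, the behaviour at the line-tangential cusp. Because $c$ and $\ell$ are tangent at $\vecx$, the fibres of $B$ pinch to zero length there, and one must verify both that the interpolated function is genuinely absolutely continuous across the cusp and that its variation is bounded by an \emph{absolute} constant, not one depending on the curvature of $c$. This is exactly where the straightness of $\ell$ is indispensable (in contrast with the two tangent curves of \cite{St} recalled above, where a tangential meeting of two curves really does fail): a straight line crosses the vertical fibres of the cusp region only boundedly often, so the variation-factor estimate underlying Theorem~\ref{join-line} and Corollary~\ref{LT-join-bound} controls the crossing contributions. Finally I would account for the constants by a direct variation-factor argument: given a finite list of points in $T$, split it according to whether consecutive points both lie in $\overline{A}$, both lie in $\overline{B}$, or cross $c$, and bound each block by the relevant component norm. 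Carrying this out carefully, rather than naively composing the factor-two joining bounds, is what should produce the sharp constant $\ssnorm{\hatf}_{\BV(T)}\le 3\norm{f}_{\BV(\sigma)}$.
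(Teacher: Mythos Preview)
Your first assertion---that $f\in\AC(\sigma)$ follows from Theorem~\ref{pasting-lemma-ac} after sending the line through $\ell$ to the $x$-axis---is correct and clean.

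Your extension argument, however, has a real gap. You split $T$ along $c$ into regions $A$ and $B$ and propose to extend separately on each, but you yourself identify the analysis on $B$ as ``the hard part'' and do not carry it out. Two difficulties remain unaddressed. First, the linear interpolation along vertical fibres on $B$ involves dividing by the fibre height, which tends to zero at the tangential cusp; nothing in the paper's toolkit shows that such a formula lies in $\AC(\overline B)$ with an absolute norm bound, and Corollary~\ref{LT-join-bound} controls only $\norm{f}_{\BV(\sigma)}$, not the interpolant. Second, even granting good extensions on $\overline A$ and $\overline B$, you must still join them along the \emph{curved} common boundary $c$: all the joining results available (Theorems~\ref{join-line}, \ref{pasting-lemma-ac}, \ref{join-sectors}) require the interface to be a line or two rays, and the variation-factor splitting argument you sketch would need a new crossing estimate for lists that straddle $c$.

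The paper avoids both problems with a much simpler additive decomposition. After an affine change placing $\ell$ on the $y$-axis and $c=\{(x,k(x)):x\in[0,1]\}$, set $g(x,y)=f(0,y)$; by Theorem~\ref{ac-properties}\ref{r-ext} this constant-in-$x$ extension of $f|\ell$ lies in $\AC(R)$ for the ambient square $R$, with $\norm{g}_{\BV(T)}\le\norm{f}_{\BV(\sigma)}$. Then $f_1=f-g$ vanishes identically on $\ell$, so extending $f_1|c$ constantly in $y$ via $\hat f_1(x,y)=f_1(x,k(x))$ gives $\hat f_1\in\AC(R)$ (this is the content of Lemma~\ref{from-c-to-T}) which agrees with $f_1$ on all of $\sigma$, since $\hat f_1(0,y)=f_1(0,0)=0$. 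The sum $\hat f=\hat f_1+g$ is then absolutely continuous on the whole square, restricts to $f$ on $\sigma$, and the triangle inequality yields the constant~$3$. No cusp analysis, no curved-boundary joining: both summands are globally defined on $R$, so the geometry of $B$ never enters.
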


\begin{proof}
By applying a suitable affine transformation we may assume that $\vecx = (0,0)$, that $\ell = \{(0,y) \st 0 \le y \le 1\}$, and that $c$ joins $(0,0)$ to $(1,1)$. We may assume that $c$ is the graph of the convex function $k:[0,1] \to \mR$. Let $R = [0,1] \times [0,1]$. 

Let $g(x,y) = f(0,y)$, $(x,y) \in R$. Then, as $f|\ell \in AC(\ell)$, Theorem~\ref{ac-properties}\ref{r-ext} implies that $g \in AC(R)$. Define $f_1: c \to \mC$ by $f_1(x,y) = f(x,y) - g(x,y)$, $(x,y) \in c$. Now $f_1 \in AC(c)$, so as $c$ is a convex curve, the function $h:[0,1] \to \mC$, $h(x) = f_1(x,k(x))$ is also absolutely continuous. Indeed ${\hat f}_1(x,y) = h(x)$ is then absolutely continuous on all of $R$. It follows that
  \[ {\hat f}(x,y) = {\hat f}_1(x,y)  + g(x,y), \qquad (x,y) \in R \]
  is also absolutely continuous on $R$. But of course ${\hat f}$ is an extension of $f$ and so $f = {\hat f}|\sigma \in AC(\sigma)$.
  
  The estimate for $\|\hat{f}\|_{BV(T)}$ simply follows from the triangle inequality and the facts that $\|\hat{f}_1\|_{BV(T)} \le 2\|f\|_{\sigma}$ (by Lemma~\ref{from-c-to-T}) and $\|g\|_{BV(T)} \le \|f\|_{BV(\sigma)}$ (by Theorem~\ref{ac-properties}\ref{r-ext}).
\end{proof}

The following theorem is a generalization of \cite[Theorem~6]{AS2} for $\PIC$ sets.

\begin{theorem}\label{APIC-joining}
Suppose that $\sigma \in \APIC$ is represented as above as a union of convex curves and lines meeting these curves tangentially, and that $f: \sigma \to \mC$. Then $f \in AC(\sigma)$ if and only if
 \begin{enumerate}
  \item $f|c_k \in AC(c_k)$ for $k = 1,\dots,n$, and
  \item $f|\ell_j \in AC(\ell_j)$ for $j = 1,\dots,m$.
 \end{enumerate}
\end{theorem}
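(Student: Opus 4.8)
The plan is to prove the two implications separately. The forward direction is immediate: if $f \in \AC(\sigma)$, then since each component curve $c_k$ and each line segment $\ell_j$ is a compact subset of $\sigma$, Theorem~\ref{ac-properties}\ref{restr} gives at once that $f|c_k \in \AC(c_k)$ and $f|\ell_j \in \AC(\ell_j)$. All the work lies in the converse, so assume conditions (1) and (2). I first observe that membership $f \in \BV(\sigma)$ is already supplied by Theorem~\ref{APIC-join-bound}, so the only property remaining to establish is absolute continuity. For this the natural tool is the Patching Lemma (Theorem~\ref{patching-lemma}): it suffices to produce, for each $\vecx \in \sigma$, a compact neighbourhood $U_\vecx$ on which $f$ is absolutely continuous.

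The easy points are disposed of first. If $\vecx$ lies in the relative interior of a single component, then a sufficiently small closed disk $D$ about $\vecx$ meets $\sigma$ in a subset of that one component, and $f|(D \cap \sigma) \in \AC(D \cap \sigma)$ follows from the restriction property applied to the hypothesis on that component. The substance is concentrated at the finitely many \emph{vertices}, that is, the points where two or more components meet. Fixing such a vertex $\vecx$ and shrinking to a closed disk $D$ small enough that $U = D \cap \sigma$ consists of $\vecx$ together with a single arc or segment of each incident component (each having $\vecx$ as an endpoint), the goal is to glue these incident pieces using the half-plane pasting theorem, Theorem~\ref{pasting-lemma-ac}.

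The key geometric observation is that the tangency types permitted in an $\APIC$ set are precisely those for which a single line through $\vecx$ places two incident pieces in complementary closed half-planes. Two pieces meeting transversally are separated by a generic line; two curves meeting tangentially do so with strictly opposite convexity, so their common tangent line $L$ puts one curve in $H^+$ and the other in $H^-$, each meeting $L$ only at $\vecx$; and at a line tangential vertex the segment lies along the common tangent $L$ while the curve lies strictly to one side, so with the curve as $\sigma_2 \subseteq H^-$ and the segment as $\sigma_1 \subseteq H^+$ the hypothesis $\sigma_2 \cap L \subseteq \sigma_1 \cap L$ holds because the curve meets $L$ only at $\vecx \in \ell$. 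This last case is exactly the content of Lemma~\ref{LT-vertex-join}, which moreover supplies the controlled extension needed elsewhere. In every case Theorem~\ref{pasting-lemma-ac} shows the union of the two pieces to be absolutely continuous.

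To handle a vertex at which more than two pieces meet, I would order the incident pieces cyclically by their tangent ray at $\vecx$ and peel them off one half-plane at a time, repeatedly applying Theorem~\ref{pasting-lemma-ac}, exactly as in case (3) of the proof of Theorem~\ref{patching-polygons}: choose a separating line $L$ through $\vecx$ so that the pieces split between the two closed half-planes, recursively establish absolute continuity of the (smaller) union of pieces in each half-plane, and then glue the two halves along $L$. The main obstacle is the bookkeeping forced by tangential coincidences, since pieces sharing a tangent direction cannot be separated by a generic line; the separating line at each stage must therefore be taken to be the relevant common tangent, and one must verify that this peeling never strands two same-direction pieces on the same side in a way that obstructs separation. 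The $\APIC$ hypothesis forbidding two curves tangent with the \emph{same} convexity (the excluded configuration $\sigma_4$ of Figure~\ref{APIC}) is exactly what rules this out. The induction then terminates with $f|U \in \AC(U)$, and the Patching Lemma yields $f \in \AC(\sigma)$.
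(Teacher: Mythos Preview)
Your argument is essentially correct and uses the same high-level architecture as the paper --- the Patching Lemma to localise, and repeated use of Theorem~\ref{pasting-lemma-ac} to glue the incident pieces at a vertex --- but the two proofs organise the vertex analysis differently.

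The paper does \emph{not} handle all vertices uniformly. For any vertex that is not a line tangential vertex, the configuration is locally that of a $\PIC$ set, and the paper simply cites the $\PIC$ joining theorem \cite[Corollary~3]{AS2} to obtain a compact neighbourhood on which $f$ is absolutely continuous. Only the genuinely new case, a line tangential vertex $\vecx$, is treated from scratch: the paper takes a small polygonal neighbourhood $P$ of $\vecx$ and triangulates it so that each triangle $T_j$ contains at most one curve $c_k$ together with any line meeting $c_k$ tangentially at $\vecx$; Lemma~\ref{LT-vertex-join} then gives $f|(T_j\cap\sigma)\in\AC(T_j\cap\sigma)$, and the triangles are glued along their common edges exactly as in the polygon argument of Theorem~\ref{patching-polygons}.

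Your approach instead peels the incident pieces themselves, one half-plane at a time, without passing through a triangulation and without appealing to \cite{AS2}. This is more self-contained and your geometric observation --- that the $\APIC$ restrictions are precisely what allow any two incident pieces to be separated by a line through $\vecx$ satisfying the hypothesis $\sigma_2\cap L\subseteq\sigma_1\cap L$ of Theorem~\ref{pasting-lemma-ac} --- is the right one. The price is the bookkeeping you flag: when several pieces share a tangent direction (a line together with curves of opposite convexity on either side), the order of peeling and the choice of which side carries the line must be made carefully so that the containment condition holds at each step. This can be done, but the paper's triangulation sidesteps it entirely by reducing to the fixed configuration of Lemma~\ref{LT-vertex-join} inside each triangle, after which the gluing is purely along straight triangle edges.
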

 
\begin{proof}
The forward direction follows from the general results about restrictions of absolutely continuous functions.

Suppose then that (1) and (2) hold. By the Patching Lemma, it suffices to show that $f$ is absolutely continuous on a compact neighbourhood of each point in $\sigma$.
Note that if $\vecx \in \sigma$ is not a line tangential vertex, then, by the $\PIC$ joining theorem \cite[Corollary 3]{AS2}, $f$ is absolutely continuous on a compact neighbourhood of $\vecx$. So suppose that $\vecx$ is a line tangential vertex.

Fix a small polygonal region $P$ centred at $\vecx$ and split this into triangles $T_1,\dots,T_r$ so that (the closure of) each triangle contains at most one curve $c_k$ (and necessarily any line which meets $c_k$ tangentially at $\vecx$). (See Figure~\ref{triangulate-LT}.) Using Lemma~\ref{LT-vertex-join}, one can deduce that $f$ is absolutely continuous on $T_j \cap \sigma$ for all $j$. 

Again, as in the proof of Theorem~\ref{patching-polygons}, one may inductively use Theorem~\ref{pasting-lemma-ac} to show that $f$ is absolutely continuous on the compact neighbourhood $P \cap \sigma$ of $\vecx$. Thus $f$ is absolutely continuous on all of $\sigma$.
\end{proof}

\begin{figure}
\begin{center}
\begin{tikzpicture}[scale=4]

\draw[fill,green!10] (10,1) -- (10.25,0.5) -- (10.5,1.5) -- (10,1);

\draw[fill,blue!10] (10,1) -- (10.5,1.5) -- (10,1.5) -- (10,1);
\draw[blue] (10,1) -- (10.5,1.5) -- (10,1.5) -- (10,1);

\draw[fill,green!10] (10,1) -- (10,1.5) -- (9.5,1.5) -- (10,1);

\draw[fill,blue!10] (10,1) -- (9.5,1.5) -- (9.5,0.5) -- (10,1);
\draw[blue]  (10,1) -- (9.5,1.5) -- (9.5,0.5) -- (10,1);

%\draw[fill,green!10] (10,1) -- (9.5,1) -- (9.5,0.5) -- (10,1);

\draw[fill,blue!10] (10,1) -- (9.5,0.5) -- (10,0.5) -- (10,1);
\draw[blue]  (10,1) -- (9.5,0.5) -- (10,0.5) -- (10,1);

%\draw[fill,green!10] (10,1) -- (9.5,1) -- (9.5,0.5) -- (10,1);

\draw[fill,green!10] (10,1) -- (9.5,0.5) -- (10,0.5) -- (10,1);
\draw[blue]  (10,1) -- (9.5,0.5) -- (10,0.5) -- (10,1);

\draw[fill,blue!10] (10,1) -- (10,0.5) -- (10.25,0.5) -- (10,1);
\draw[blue] (10,1) -- (10,0.5) -- (10.25,0.5) -- (10,1);

\draw[blue] (10.5,1.5) -- (9.5,1.5) -- (9.5,0.5) -- (10.25,0.5) -- (10.5,1.5);

\draw[ultra thick] (10,0.5) -- (10,1.5);
\draw[ultra thick] (10,1) -- (10.5,0.5);
\draw[ultra thick] (10,1) arc (0:30:1cm); 
\draw[ultra thick] (10,1) arc (180:150:1cm); 
\draw[ultra thick] (10,1) arc (225:260:1cm);
\draw[ultra thick] (10,1) arc (180:210:1cm);
\draw[ultra thick] (10,1) -- (9.5,1);
\draw[fill,black] (10,1) circle (0.03cm);
\draw (10.08,1) node {$\vecx$};
\end{tikzpicture}
\caption{Triangulating around a  line tangential vertex.}\label{triangulate-LT}
\end{center}
\end{figure}
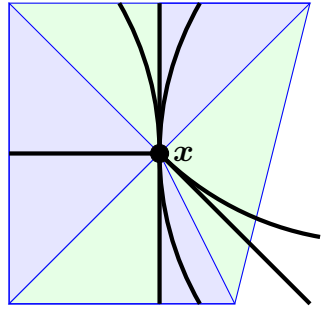

A consequence of the preceding results is that $\APIC$ is a `Gelfand--Kolmogorov family' of compact sets, which generalizes \cite[Theorem~7]{AS2}. 

\begin{corollary}
Suppose that $\sigma,\tau \in \APIC$. Then $AC(\sigma)$ is isomorphic (as a Banach algebra) to $AC(\tau)$ if and only if $\sigma$ is homeomorphic to $\tau$. 
\end{corollary}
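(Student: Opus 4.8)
The statement is a Gelfand--Kolmogorov type theorem, and I would prove the two implications by quite different means.

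For the forward (only if) direction, the plan is to run the standard maximal ideal space argument. Since $AC(\sigma)$ is a unital commutative Banach algebra, I would appeal to the fact (established for these algebras, as in the $\PIC$ case of \cite{AS2}) that its characters are precisely the point evaluations $\delta_\vecx$, $\vecx \in \sigma$; because the coordinate functions $(x,y) \mapsto x$ and $(x,y)\mapsto y$ lie in $AC(\sigma)$ and separate points, the map $\vecx \mapsto \delta_\vecx$ is a homeomorphism of $\sigma$ onto the maximal ideal space of $AC(\sigma)$ with its Gelfand topology. A Banach algebra isomorphism $T: AC(\sigma) \to AC(\tau)$ then induces, by pullback of characters $\chi \mapsto \chi \circ T$, a homeomorphism between the two maximal ideal spaces, and composing with the two identifications yields a homeomorphism $\sigma \to \tau$.

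The substantial direction is the converse. Here I would not use a given homeomorphism directly---as Example~\ref{Cantor-ex} warns, an arbitrary homeomorphism need not respect absolute continuity---but would use it only to read off the combinatorial data and then build an $AC$-compatible homeomorphism by hand. First, using Lemma~\ref{split-projectable}, I would fix polygonal mosaics decomposing $\sigma$ and $\tau$ into components which are projectable convex curves and line segments, presenting each set as (the drawing of) a finite planar graph. The points of degree other than two (branch points and endpoints) are topological invariants, so a homeomorphism $\sigma \to \tau$ matches these up and carries the arcs between them to arcs; by subdividing edges further (which changes neither set nor its $AC$ structure) I would arrange a common combinatorial structure in which every edge on each side is a single component. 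Next, on each edge I would install an $AC$-compatible identification: for an edge $e$ of $\sigma$, Theorem~\ref{c-isom-01} (for a curve) or Lemma~\ref{subsets-of-convex} and Lemma~\ref{iota-lemma} (for a segment or a sub-arc) provide a parameterization $h_e: [0,1] \to e$ under which $AC(e) \cong AC[0,1]$, and likewise $h_{e'}$ for the matching edge $e'$ of $\tau$. Orienting these so that endpoints correspond under the graph isomorphism, I would set $\psi|_{e'} = h_e \circ h_{e'}^{-1}$; these agree at shared vertices, so they patch to a homeomorphism $\psi: \tau \to \sigma$. For $f \in AC(\sigma)$ and each edge $e'$, the restriction of $f \circ \psi$ to $e'$ equals $(f|_e \circ h_e) \circ h_{e'}^{-1}$, which lies in $AC(e')$ since $f|_e \circ h_e \in AC[0,1]$; hence by Theorem~\ref{APIC-joining} we get $f \circ \psi \in AC(\tau)$. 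Thus $\Phi(f) = f \circ \psi$ is an algebra homomorphism from $AC(\sigma)$ into $AC(\tau)$, the same construction applied to $\psi^{-1}$ furnishes its inverse, and the norm equivalence from Theorem~\ref{APIC-join-bound} together with Theorem~\ref{ac-properties}\ref{restr} shows $\Phi$ and $\Phi^{-1}$ are bounded, so $\Phi$ is a Banach algebra isomorphism.

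I expect the main obstacle to be the bookkeeping in the converse direction: ensuring that the mosaics on the two sides can be refined to a genuinely common combinatorial structure in which corresponding edges are single components, that the parameterizations can be simultaneously oriented so that they match at every shared vertex (including line tangential vertices, where a curve and a segment of $\tau$ may correspond to, say, two curves of $\sigma$), and that the resulting $\psi$ is globally well-defined and continuous. Once the graph isomorphism is in hand and the edges are single components, the analytic content is supplied entirely by Theorem~\ref{c-isom-01}, Theorem~\ref{APIC-joining} and Theorem~\ref{APIC-join-bound}.
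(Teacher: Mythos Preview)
Your proposal is correct and matches the paper's intended approach: the paper gives no proof at all, merely recording the corollary as a consequence of the preceding results (Theorems~\ref{APIC-joining} and~\ref{APIC-join-bound}) together with the argument of \cite[Theorem~7]{AS2}, which is precisely the edge-by-edge construction you describe. Your write-up in fact supplies more detail than the paper does, and the obstacles you flag (refining both mosaics to a common combinatorial graph, orienting the edge parameterizations consistently at shared vertices) are the right ones to highlight.
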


A final result which we will need is to be able to extend an absolutely continuous function defined on an $\APIC$ set on each polygon of its polygonal mosaic.

\begin{theorem}\label{c-in-poly-thm}
Suppose that $c$ is a projectable convex curve joining two vertices $\vecx,\vecy$ of a convex polygon $P$ to each other through the interior of $P$. Let $\sigma_0$ be a compact set which is the union of $c$ and a (possibly empty) subset of the boundary of $P$, and suppose that $f \in AC(\sigma_0)$. Then $f$ admits an extension $\hat f \in AC(P)$.
\end{theorem}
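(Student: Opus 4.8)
The plan is to handle the boundary and the interior in two stages: first extend $f$ to all of $\partial P$, and then fill the interior by splitting $P$ into a thin polygonal collar hugging $c$ together with two genuinely polygonal regions, on each of which an extension result already proved is available.

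First I would extend $f$ from $B$ to all of $\partial P$. On each side $\ell$ of $P$ the set $B \cap \ell$ is a compact subset of a line segment, so Theorem~\ref{real-ext} (via Lemma~\ref{iota-lemma}) extends $f|(B\cap\ell)$ to all of $\ell$; the values at the vertices can be prescribed consistently because any vertex of $P$ not lying in the compact set $B$ is isolated from $B$. Since $c \cup \partial P$ is an $\APIC$ set whose components are the sides of $P$ together with the curve $c$, and $f$ is absolutely continuous on each such component, Theorem~\ref{APIC-joining} shows that the resulting joined function lies in $AC(c \cup \partial P)$. Thus we may assume from now on that $f \in AC(c \cup \partial P)$, and it suffices to produce an extension to $P$ agreeing with this function on $\partial P$.

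Next, apply an affine map (Theorem~\ref{ac-properties}\ref{aff-inv}) so that $c$ is the graph of a convex function $k$ over the chord $\ls[\vecx,\vecy]$, parametrized by $h(x) = (x,k(x))$. The curve $c$ divides $P$ into two regions $P^+$ and $P^-$, and the key construction is a polygonal collar $N$ around $c$: choose a polygonal path $\pi^+$ made of chords of $c$ (these lie above $c$ and hence in $P^+$, and lie in $P$ by convexity) and a polygonal path $\pi^-$ made of short tangent segments to $c$ (these lie below $c$, in $P^-$), each running from $\vecx$ to $\vecy$ close to $c$. Let $N$ be the polygonal region between $\pi^+$ and $\pi^-$; it contains $c$, and $P \setminus \intr(N)$ consists of two polygonal regions $\Pi^+,\Pi^-$. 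On $N$ I would define $\hat f(x,y) = f(h(x))$, the value read off $c$ by vertical collapse; since this depends only on $x$ and $x \mapsto f(h(x))$ lies in $AC[0,1]$ (as $c$ is projectable), Theorem~\ref{ac-properties}\ref{r-ext} gives $\hat f \in AC(N)$, and $\hat f = f$ on $c$. This collapse also supplies absolutely continuous boundary data on $\pi^\pm$. On each polygonal region $\Pi^\pm$ the boundary data (from $\pi^\pm$ and from $\partial P$) is absolutely continuous on $\partial \Pi^\pm$, so Theorem~\ref{fill-poly} extends it to all of $\Pi^\pm$. As $P = N \cup \Pi^+ \cup \Pi^-$ is a union of polygonal regions on each of which $\hat f$ is absolutely continuous and the definitions agree on the shared polygonal interfaces, Theorem~\ref{patching-polygons} yields $\hat f \in AC(P)$, and by construction $\hat f$ extends $f$.

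The main obstacle is the construction of the lower collar boundary $\pi^-$ near the endpoints $\vecx,\vecy$, and especially at a line tangential vertex, where $c$ meets a side of $P$ tangentially and $P^-$ pinches to a cusp, so that a tangent chain cannot be threaded into it. To deal with this I would first cut off a small triangle at each such vertex; within that triangle the set reduces to a convex arc meeting a side tangentially at the vertex, which is exactly the configuration of Lemma~\ref{LT-vertex-join}, and so $f$ extends absolutely continuously into the triangle. The collar construction is then carried out only along the part of $c$ bounded away from the endpoints, and the finitely many corner triangles, the collar, and the two polygonal regions are patched together by Theorem~\ref{patching-polygons} exactly as above.
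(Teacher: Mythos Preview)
Your proposal is correct and follows essentially the same strategy as the paper: extend $f$ to all of $c\cup\partial P$, build a polygonal neighbourhood of $c$ on which the extension is obtained by collapsing vertically onto $c$ (using projectability and Theorem~\ref{ac-properties}\ref{r-ext}), fill the remaining genuinely polygonal pieces via Theorem~\ref{fill-poly}, and assemble everything with Theorem~\ref{patching-polygons}.

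The only real difference is in the packaging of the collar. You attempt a single collar $N$ running from $\vecx$ to $\vecy$, bounded above by chords and below by tangent segments, and invoke endpoint triangles only when a line tangential vertex obstructs the lower tangent chain. The paper instead \emph{always} peels off two small right-angle triangles $T_1,T_2$ at the endpoints (using Lemma~\ref{from-c-to-T} in the generic case and Lemma~\ref{LT-vertex-join} in the tangential case), and then builds the vertically-collapsed polygon $Q$ only over the interior arc $c'$ between $\vecz$ and $\vecz'$. This buys a uniform construction with no case split, and it sidesteps the delicate point in your argument of threading the tangent chain $\pi^-$ all the way into the corner at $\vecx$ while staying inside $P$; your version works, but the paper's is tidier. (Minor point: you use the symbol $B$ without defining it; presumably $B=\sigma_0\cap\partial P$.)
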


\begin{proof}
Note that $f$ is initially specified at at least two points of the boundary of $P$, namely the endpoints of $c$. 
First extend $f$ to all of the sides of $P$ by making it continuous and affine on any parts of $\partial P$ which were not part of $\sigma_0$ (and hence, by Lemma~\ref{iota-lemma} and Theorem~\ref{APIC-joining}, it is in $AC(\partial P)$). If $c$ is a straight line, then the conclusion follows by applying Theorems \ref{fill-poly} and \ref{patching-polygons}. 

Suppose then that $c$ is not a straight line. Let $P_1 \subseteq P$ be the convex polygon containing $c$ which has the diagonal $\ls[\vecx,\vecy]$ as a side. (If $\ls[\vecx,\vecy]$ was already a side, then $P_1 = P$.)
Since $c$ is convex, one can find a line segment $\ell_1$ in $P_1$ which is tangential to $c$ at $\vecx$. Choose a point $\vecz$ on $c$ so that the orthogonal projection $\vecw$ of $\vecz$ lies on $\ell_1$, and consider the triangle $T_1$ with vertices $\vecx,\vecz,\vecw$. If $\ell_1$ forms part of a side of $P$, and $f$ has been specified on that side, then extend $f$ to all of $T_1$ using Lemma~\ref{LT-vertex-join}. Otherwise extend $f$ to $T_1$ using Lemma~\ref{from-c-to-T}. In either case one has an absolutely continuous extension of $f$ to this triangle. 

The same procedure can be repeated choosing a small right-angle triangle $T_2$ with vertices $\vecy,\vecz',\vecw'$ to define an extension of $f$ on $T_2$. The points $\vecz,\vecz' \in c$ now lie in the interior of $P$. Let $R$ be a rectangle with side $\ls[\vecz,\vecz']$ which contains $c'$, the part of $c$ between $\vecz$ and $\vecz'$. Choose a polygon $Q \subseteq R \cap P_1$ which contains $c'$.
Since $c$ is projectable we may use Theorem~\ref{ac-properties}\ref{r-ext} to extend $f$ to all of $Q$ by making it constant on lines which are orthogonal to $\ls[\vecz,\vecz']$. 

We have now specified the extension $\hat f$ on the green regions in Figure~\ref{c-inside-poly}, as well as on the blue boundary of $P$. This leaves a finite number of polygonal regions on which $\hat f$ given on the boundary, but not in the interiors of each of the regions. Using Theorem~\ref{fill-poly} one can now extend $\hat f$ so that it is absolutely continuous on each of these regions. Finally, we can apply Theorem~\ref{patching-polygons} to deduce that $\hat f$ is absolutely continuous on the whole set $P$.
\end{proof}

\begin{figure}
\begin{center}
\begin{tikzpicture}[scale=1.7]

\coordinate (x) at (-1.2,0);
\coordinate (y) at (4,0);
\coordinate (z) at (-0.28,1);
\coordinate (w) at (-0.52,1.16);
\coordinate (zd) at (3.05,1);
\coordinate (wd) at (3.29,1.17);
\coordinate (r1) at (-0.28,2);
\coordinate (r2) at (3.05,2);
\coordinate (q1) at (0,1.8);
\coordinate (q2) at (2.4,1.8);

\draw[green!10,fill] (x) -- (z) -- (w) -- (x);
\draw[green!10,fill] (y) -- (wd) -- (zd) -- (y);
\draw[green!10,fill] (z) -- (zd) -- (q2) -- (q1) -- (z);

\draw[red,line width = 1mm] (4,0) arc (30:150:3cm);
\draw[blue, thick] (x) -- (1,-1) -- (3,-1) -- (y) -- (2.8,2) -- (1,3) -- (0,2.3) -- (-2,1) -- (x);
\draw[red,line width = 1mm] (3,-1) -- (y) -- (2.8,2);
\draw[green,dashed] (x) -- (y);
\draw[green] (x) -- (z) -- (w) -- (x);
\draw[green] (y) -- (zd) -- (wd);
\draw[blue,dashed] (zd) -- (r2) -- (r1) -- (z);
\draw[green] (z) -- (zd) -- (q2) -- (q1) -- (z);

\draw[fill,black] (x) circle (0.03cm);
\draw (x) node[left] {$\vecx$};
\draw[fill,black] (y) circle (0.03cm);
\draw (y) node[right] {$\vecy$};
\draw[fill,black] (z) circle (0.03cm); 
\draw (z) node[below] {$\vecz$};
\draw[fill,black] (zd) circle (0.03cm); 
\draw (zd) node[below] {$\vecz'$};
\draw[fill,black] (w) circle (0.03cm); 
\draw (w) node[left] {$\vecw$};
\draw[fill,black] (wd) circle (0.03cm); 
\draw (wd) node[right] {$\vecw'$};
\draw (-1,0.6) node {$T_1$};
\draw (3.85,0.6) node {$T_2$};
\draw (0.1,1.6) node {$Q$};
\draw (1.1,2.7) node {$P_1$};
\draw (1.4,1.35) node {$c$};
\draw (r2) node[right] {$R$};

\end{tikzpicture}
\caption{The construction in the proof of Theorem~\ref{c-in-poly-thm}. The value of $f$ is initially given on the red curves.}\label{c-inside-poly}
\end{center}
\end{figure}
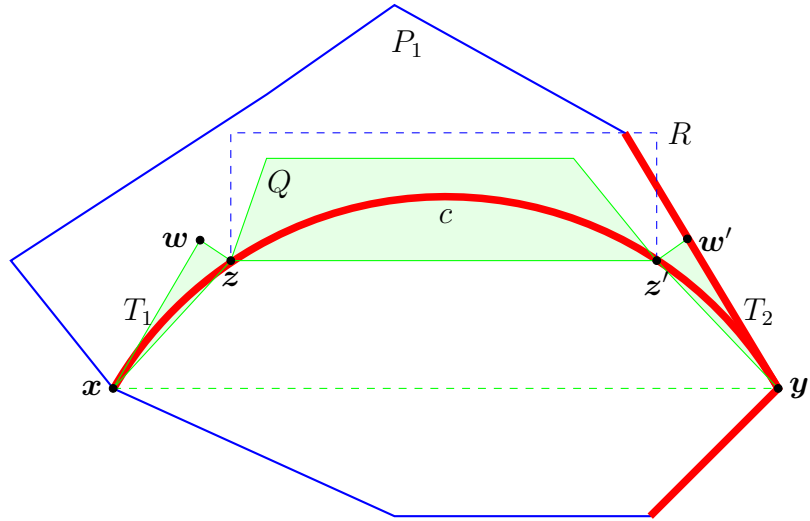

\section{The $\SPIC$ extension theorem}

Suppose that $\sigma_0 \in \SPIC$. Then there exists a set $\sigma_1 \in \APIC$ with $\sigma_0 \subseteq \sigma_1$. By Lemma~\ref{split-projectable}, we may choose a polygonal mosaic so that each of the components of $\sigma_1$ is projectable, and we shall assume that that is the case throughout this section.

We will see below that we can always construct extensions of $AC$ functions defined on APIC sets, and then extend this result to the case of SPIC sets. 

\begin{lemma}\label{extend-to-apic}
Suppose that $\emptyset \ne \sigma_0 \subseteq \sigma_1$ with $\sigma_1 \in \APIC$. If $f \in AC(\sigma_0)$, then there exists an extension $\hat f \in AC(\sigma_1)$ with $\ssnorm{\hat f}_{BV(\sigma_1)} \le C_{\sigma_1} \norm{f}_{BV(\sigma_0)}$ for some $C_{\sigma_1} > 0$.
\end{lemma}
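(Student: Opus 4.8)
The plan is to fix a polygonal mosaic $\calM = \{P_i\}$ for $\sigma_1$ whose components (convex curves and sides of the $P_i$) are all projectable, which is possible by Lemma~\ref{split-projectable}. Regarding $\sigma_1$ as the drawing of a planar graph with these components as edges, I would build the extension $\hat f$ one component at a time and then invoke the joining results of the previous section. The two things to arrange are that the separate extensions agree wherever components meet, so that $\hat f$ is well defined, and that each single-component extension has norm controlled by $\norm{f}_{BV(\sigma_0)}$ with a constant that does not depend on $\sigma_0$.

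Since components of an $\APIC$ set meet only at vertices of the mosaic, I would first assign to every vertex $\vecv$ a value: set $\hat f(\vecv) = f(\vecv)$ if $\vecv \in \sigma_0$, and otherwise set $\hat f(\vecv) = f(\vecp_0)$ for a fixed $\vecp_0 \in \sigma_0$. In either case $|\hat f(\vecv)| \le \norm{f}_\infty$, and since every component takes its assigned value at each of its two endpoints, the pieces will automatically be consistent at shared vertices. The remaining work is, for each component $\tau$ with endpoints $\vecu,\vecv$ and with $A = \sigma_0 \cap \tau$, to produce $\hat f|\tau \in \AC(\tau)$ that restricts to $f$ on $A$ and takes the prescribed values at $\vecu$ and $\vecv$.

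For a projectable convex curve component this reduces, via the isomorphism $\Phi_h : AC(\tau) \to AC[0,1]$ of Theorem~\ref{c-isom-01} (and the affine identification of Theorem~\ref{ac-properties}\ref{aff-inv} for a segment), to a one-dimensional problem: given $g_0 \in AC(B)$ on a compact $B \subseteq [0,1]$ together with prescribed values at $0$ and $1$, extend to $AC[0,1]$. Here I would first extend $f|A$ to all of $\tau$ by Theorem~\ref{conv-curve-extend} and transport it through $\Phi_h$; restricting the result to $B$ shows that $g_0 = \Phi_h(f|A)$ satisfies $\norm{g_0}_{BV(B)} \le 4\norm{f}_{BV(\sigma_0)}$, a bound whose constant is independent of $A$ (this is the reason for routing through Theorem~\ref{conv-curve-extend} rather than estimating the isomorphism of Lemma~\ref{subsets-of-convex} directly). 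I would then take $g$ on $[0,1]$ to be the gap-filling extension $\iota(g_0)$ of Lemma~\ref{iota-lemma} on $[\min B, \max B]$ and linear on each of the two end intervals, interpolating between the prescribed endpoint values and $g_0$. Continuity at $\min B$ and $\max B$ makes $g \in AC[0,1]$, and the triangle inequality for the variation gives $\norm{g}_{BV[0,1]} \le C_0 \norm{f}_{BV(\sigma_0)}$ for an absolute constant $C_0$; pulling back by $\Phi_h^{-1}$, which has norm $1$, yields $\ssnorm{\hat f}_{BV(\tau)} \le C_0 \norm{f}_{BV(\sigma_0)}$. When $A = \emptyset$ one simply takes the linear interpolant between the two endpoint values.

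Finally, the function $\hat f$ assembled from these pieces is well defined and restricts to an $\AC$ function on every component, so $\hat f \in \AC(\sigma_1)$ by Theorem~\ref{APIC-joining}, and by construction $\hat f|\sigma_0 = f$. For the norm, Theorem~\ref{APIC-join-bound} gives $\ssnorm{\hat f}_{BV(\sigma_1)} \le K_{\sigma_1,\calM} \sum_{\tau \in \calC} \ssnorm{\hat f}_{BV(\tau)}$, and since there are finitely many components each bounded as above, this is at most $C_{\sigma_1}\norm{f}_{BV(\sigma_0)}$ with $C_{\sigma_1} = K_{\sigma_1,\calM}\,(\#\calC)\,C_0$. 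I expect the main obstacle to be precisely the single-component extension: one must respect the prescribed endpoint values without disturbing the prescribed interior values $f|A$ (an additive linear correction would spoil the latter), and one must keep the constant independent of $\sigma_0$, which is exactly why the restriction trick bounding $\norm{g_0}_{BV(B)}$ is needed.
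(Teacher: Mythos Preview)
Your proposal is correct and follows essentially the same route as the paper: adjoin the finitely many mosaic vertices to $\sigma_0$ with fixed values (the paper simply sets the missing ones to $0$, you use $f(\vecp_0)$), extend separately on each component via Theorem~\ref{conv-curve-extend}, and then invoke Theorems~\ref{APIC-joining} and~\ref{APIC-join-bound}. The paper compresses your single-component discussion into one line, observing that the added vertices are isolated in the enlarged domain so the norm grows by at most a fixed factor per component; your more explicit detour through $\Phi_h$ to keep the constant visibly independent of $\sigma_0$ achieves the same thing.
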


\begin{proof} Let $\calC$ denote the set of components of $\sigma_1$ with respect to a suitable polygonal mosaic.
Suppose that $f \in AC(\sigma_0)$.
Let $\sigma_0'$ denote the union of $\sigma_0$ and all of the (finitely many) endpoints of the components of $\sigma_1$. Any endpoint which is not in $\sigma_0$ is an isolated point of $\sigma_0'$. Thus we can extend $f$ to $\sigma_0'$ by setting it to be zero at any such point and this extension is in $AC(\sigma_0')$. The norm of the extension may increase by a factor of 3 on each component.

Let $\tau \in \calC$ be a component of $\sigma_1$.
By Theorem~\ref{conv-curve-extend} we can extend $f$ from $\sigma_0' \cap \tau$ to ${\hat f} \in AC(\tau)$ (with a possible doubling of the norm). 
Since $\sigma_0'$ contains all the endpoints of the components of $\sigma_1$, doing this for each $\tau \in \calC$ produces a well-defined function $\hat{f}: \sigma_1 \to \mC$. By Theorem~\ref{APIC-joining}, $\hat{f} \in AC(\sigma_1)$. 

The existence of the constant $C_{\sigma_1}$ follows from Theorem~\ref{APIC-join-bound}.
\end{proof}

\begin{theorem}\label{spic-extend}
Suppose that $\sigma_0 \in \SPIC$ and that $\sigma$ is a compact superset of $\sigma_0$. If $f \in AC(\sigma_0)$ then there exists an extension $\hat f \in AC(\sigma)$ with $\ssnorm{\hat f}_{BV(\sigma)} \le K_{\sigma_0} \norm{f}_{BV(\sigma_0)}$ for some $K_{\sigma_0} > 0$.
\end{theorem}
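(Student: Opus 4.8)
The plan is to reduce the whole problem to extending $f$ to a single fixed bounding square and then to use the restriction homomorphism (Theorem~\ref{ac-properties}\ref{restr}) together with one size-independent ``extend-outward'' step to absorb an arbitrary superset $\sigma$ without letting the constant depend on $\sigma$. The point of this last remark is that $K_{\sigma_0}$ must depend only on $\sigma_0$, whereas $\sigma$ may be arbitrarily large; routing everything through a fixed square is what makes the bound uniform.

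First I would use that $\sigma_0\in\SPIC$ to choose $\sigma_1\in\APIC$ with $\sigma_0\subseteq\sigma_1$ and, by Lemma~\ref{split-projectable}, a polygonal mosaic $\calM=\{P_i\}_{i=1}^n$ all of whose component curves are projectable. Lemma~\ref{extend-to-apic} then extends $f$ to $\hat f\in AC(\sigma_1)$ with $\norm{\hat f}_{BV(\sigma_1)}\le C_{\sigma_1}\norm{f}_{BV(\sigma_0)}$. On each mosaic polygon $P_i$ the set $\sigma_1\cap P_i$ is a projectable convex curve joining two vertices of $P_i$ together with a subset of $\partial P_i$, so Theorem~\ref{c-in-poly-thm} (and Theorem~\ref{fill-poly} in the degenerate case where $P_i$ contains no curve) extends $\hat f$ to $AC(P_i)$. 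Since the mosaic polygons have pairwise disjoint interiors, Theorem~\ref{patching-polygons} gives $\hat f\in AC(M)$, where $M=\bigcup_i P_i$ is a finite union of polygonal regions containing $\sigma_0$, and the norm bounds in those results keep $\norm{\hat f}_{BV(M)}$ bounded by a constant depending only on $\sigma_0$ and $\calM$ times $\norm{f}_{BV(\sigma_0)}$.

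Next I would fix, once and for all, a square $R_0$ with $M\subseteq\intr(R_0)$ and triangulate $\cl(R_0\setminus\intr(M))$ compatibly with $\partial M$. After assigning $\hat f$ on the remaining vertices of the triangulation (for instance the value $0$ on $\partial R_0$ and at interior vertices), affinely interpolating along each newly created edge, the resulting function is continuous and absolutely continuous on the whole $1$-skeleton, with boundary data on each triangle controlled by $\norm{\hat f}_{BV(M)}$. Filling in each triangle via Theorem~\ref{fill-poly} and patching the triangles to $M$ via Theorem~\ref{patching-polygons} (with the norm-controlled joining of Corollary~\ref{join-two-polys}) yields $\hat f\in AC(R_0)$. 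As $M$, $R_0$ and the triangulation are all determined by $\sigma_0$, only finitely many such steps occur, so $\norm{\hat f}_{BV(R_0)}\le C'_{\sigma_0}\norm{f}_{BV(\sigma_0)}$.

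Finally, given an arbitrary compact $\sigma\supseteq\sigma_0$, choose a square $R$ with $R_0\subseteq\intr(R)$ and $\sigma\subseteq R$, and extend $\hat f$ from $R_0$ to $R$ by composing with the nearest-point projection $\rho\colon R\to R_0$. This $\rho$ is continuous and piecewise affine: the identity on $R_0$, constant in the normal direction on each of four edge-strips, and constant on each of four corner regions. Hence $\hat f\circ\rho$ is absolutely continuous on each of these nine polygonal pieces by Theorem~\ref{ac-properties}\ref{r-ext} and affine invariance, and Corollary~\ref{join-two-polys} patches them; crucially, each strip and corner is a single rectangle triangulated into two triangles regardless of its size, so the joining constant $C''$ here is absolute and independent of $\diam(R)$. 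Thus $\norm{\hat f\circ\rho}_{BV(R)}\le C''\norm{\hat f}_{BV(R_0)}$, and restricting to $\sigma$ via Theorem~\ref{ac-properties}\ref{restr} gives the required extension with $K_{\sigma_0}=C''C'_{\sigma_0}$. The main obstacle is precisely this uniformity: extending directly from $M$ to a square containing $\sigma$ would require a number of fill-in steps growing with $\diam(\sigma)$, forcing the constant to depend on $\sigma$, so passing through the fixed square $R_0$ and using the size-independent outward extension is the essential device.
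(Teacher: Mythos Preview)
Your overall architecture matches the paper's proof closely: extend to an $\APIC$ superset $\sigma_1$ via Lemma~\ref{extend-to-apic}, fill the mosaic polygons via Theorem~\ref{c-in-poly-thm}, patch to $M$, pass to a fixed square $R_0$, then extend outward to absorb $\sigma$. There is, however, one genuine gap in the second paragraph. You apply Theorem~\ref{c-in-poly-thm} \emph{independently} on each $P_i$ with input data $\sigma_1\cap P_i$. But the proof of Theorem~\ref{c-in-poly-thm} begins by extending $f$ affinely over those parts of $\partial P_i$ not in $\sigma_1$, and adjacent mosaic polygons may share a side $e\not\subseteq\sigma_1$; the two affine extrapolations along $e$ (determined by different nearby values of $\hat f$ on $\partial P_i$ versus $\partial P_j$) will in general disagree, so the pieces do not glue to a well-defined function on $M$ and Theorem~\ref{patching-polygons} cannot be invoked. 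The paper avoids this by extending to the $P_i$ \emph{inductively}, at each stage feeding into Theorem~\ref{c-in-poly-thm} the boundary values already fixed by earlier polygons. An equally easy fix in your framework is to enlarge $\sigma_1$ at the outset to contain every side of every $P_i$ (this is still an $\APIC$ set with the same mosaic), re-apply Lemma~\ref{extend-to-apic}, and then your independent fillings automatically agree on shared edges.

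Your final outward step via the nearest-point retraction $\rho:R\to R_0$ is a nice alternative to the paper's device of arranging $\hat f\equiv 0$ on $\partial R_0$ and extending by zero; since you in fact also set $\hat f=0$ on $\partial R_0$, the two constructions coincide here, but your argument that the nine-piece patching constant is absolute (each strip and corner triangulated by two triangles, irrespective of $\diam(R)$) makes the uniformity in $\sigma$ more transparent than the paper's one-line assertion.
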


\begin{proof} 
Suppose that $f \in AC(\sigma_0)$.
As $\sigma_0 \in \SPIC$, there exists $\sigma_1 \in \APIC$ with $\sigma_0 \subseteq \sigma_1$. By Lemma~\ref{extend-to-apic} we can find an absolutely continuous extension of $f$ to $\sigma_1$. 

Let $\calM = \{P_i\}_{i=1}^n$ be a polygonal mosaic for $\sigma_1$ chosen so that each component convex curve is projectable. 
By Theorem~\ref{c-in-poly-thm} we can extend $f$ from $\sigma_1 \cap P_1$ to be absolutely continuous on $P_1$. We may now inductively further extend $f$ to each polygon $P_i$ in turn, taking into account the values which have already been determined on the boundary of the polygon at each stage. By Theorem~\ref{patching-polygons}, the extension $\hat f$ is absolutely continuous on $M = \cup_{i=1}^n P_i$ (the blue region in Figure~\ref{extend-M-to_R}).

Let $R_0$ be a large rectangle which contains $M$ in its interior. Let $\sigma_2 = M \cup \partial R_0$ and extend $\hatf$ to $\sigma_2$ by making it zero on the boundary of $\sigma_2$. As in \cite[Lemma~4.2]{DLS1}, $\hatf \in AC(\sigma_2)$ with $\ssnorm{\hatf}_{BV(\sigma_2)} \le 5 \norm{f}_{BV(M)}$.

The region $R_0 \setminus M$ can be triangulated by 
$\{T_i\}_{i=1}^m$
with each triangle having vertices in $\sigma_2$. If necessary, use Theorem~\ref{extend-to-apic} to extend $\hatf$ to the whole boundary of $T_1$. One can then use Theorem~\ref{fill-poly} to extend $\hatf$ to an absolutely continuous function on $T_1$. As above, one can repeat this procedure with each triangle in turn, taking into account the values of $\hatf$ already determined. 

If necessary, one now chooses an even larger rectangle $R$ containing $R_0 \cup \sigma$ and extends $\hatf$ further by setting $\hatf = 0$ on $R\setminus R_0$. 
Applying Theorem~\ref{patching-polygons} again shows that $\hatf \in AC(R)$. At each step we have a control over the norm of the extension. This provides a bound on $\ssnorm{\hatf}_{BV(R)}$ which depends only on the choice of $R_0$, and not on $\sigma$.

Finally we can restrict $\hat f$ to $\sigma$ to give the conclusion of the theorem.
\end{proof}

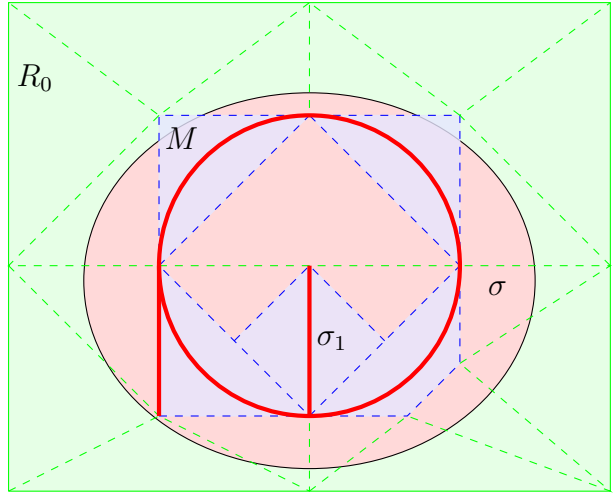
\begin{figure}
\begin{center}
\begin{tikzpicture}[scale=1]

\draw[fill,green!10] (-2,-1) -- (6,-1) -- (6,5.5) -- (-2,5.5) -- (-2,-1);
\draw[green] (-2,-1) -- (6,-1) -- (6,5.5) -- (-2,5.5) -- (-2,-1); 

\draw[fill,red!15] (2,1.8) ellipse (3cm and 2.5cm);
\draw[black] (2,1.8) ellipse (3cm and 2.5cm);

\draw[fill, blue!10,opacity=0.8] (0,0) -- (3.3,0) -- (4,0.7) -- (4,2) -- (3,1) -- (2,2) -- (1,1) -- (0,2) -- (0,0);
\draw[fill, blue!10,opacity=0.8]  (0,2) -- (2,4) -- (0,4) -- (0,2);
\draw[fill, blue!10,opacity=0.8]  (4,2) -- (4,4) -- (2,4) -- (4,2);

\draw[blue, dashed] (0,0) -- (2,0) -- (0,2) -- (0,0);
\draw[blue, dashed] (1,1) -- (2,2) -- (3,1);
\draw[blue, dashed] (2,0) -- (4,2) -- (4,0.7) -- (3.3,0) -- (2,0);
\draw[blue, dashed] (4,2) -- (4,4) -- (2,4) -- (4,2);
\draw[blue, dashed] (0,2) -- (2,4) -- (0,4) -- (0,2);

\draw[green,dashed] (-2,-1) -- (0,0) -- (2,-1) -- (2,0);
\draw[green,dashed] (2,-1) -- (3.3,0) -- (6,-1) -- (4,0.7);
\draw[green,dashed] (4,0.7) -- (6,2) -- (4,2);
\draw[green,dashed] (6,2)-- (4,4) -- (6,5.5);
\draw[green,dashed] (4,4) -- (2,5.5) -- (2,4);
\draw[green,dashed] (2,5.5) -- (0,4) -- (-2,5.5);
\draw[green,dashed] (0,4) -- (-2,2) -- (0,2);
\draw[green,dashed] (-2,2) -- (0,0);
\draw[green,dashed] (0,2) -- (4,2);

\draw[red,ultra thick] (2,2) circle (2cm);
\draw[red,ultra thick] (0,0) -- (0,2);
\draw[red,ultra thick] (2,0) -- (2,2);

\draw (-1.65,4.5) node {$R_0$};
\draw (4.5,1.7) node {$\sigma$};
\draw (2.3,1) node {$\sigma_1$};
\draw (0.3,3.7) node {$M$};

\end{tikzpicture}
\caption{Theorem~\ref{spic-extend}. The function $f$ is first extended to $\sigma_1 \in \APIC$, then to the blue polygons whose union is $M$, and then, one triangle at a time, to the rectangle $R_0$. The final step is to restrict to the set $\sigma$.}\label{extend-M-to_R}
\end{center}
\end{figure}

\section{The main theorem}

\begin{theorem}\label{main-theorem} Suppose that $\emptyset \ne \sigma_0 = \sigma_P \cup \sigma_S$ with $\sigma_P \in \PWP$ and $\sigma_S \in \SPIC$ and that $\sigma$ is a compact superset of $\sigma_0$. Then there exists a constant $C_{\sigma_0}$ such that for all $f \in AC(\sigma_0)$ there exists an extension $\hat{f} \in AC(\sigma)$ with $\|\hat{f}\|_{BV(\sigma)} \leq C_{\sigma_0}\|f\|_{BV(\sigma_0)}$.
\end{theorem}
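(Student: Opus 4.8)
The plan is to reduce everything to filling in a single fixed large rectangle $R_0$ one polygonal cell at a time, and then to transfer the result to the arbitrary superset $\sigma$ by the restriction property. Concretely, suppose we can produce $\hat f \in AC(R_0)$ extending $f$, vanishing on $\partial R_0$, with $\ssnorm{\hat f}_{BV(R_0)} \le C_{\sigma_0}\norm{f}_{BV(\sigma_0)}$, where $R_0$ (and hence $C_{\sigma_0}$) depends only on $\sigma_0$. Then, given a compact $\sigma \supseteq \sigma_0$, I would choose a rectangle $R \supseteq R_0 \cup \sigma$ and extend $\hat f$ by $0$ on $R \setminus R_0$; since $\hat f$ already vanishes on $\partial R_0$ this keeps it absolutely continuous (as in the final step of Theorem~\ref{spic-extend}) without increasing its norm, and restricting to $\sigma$ (Theorem~\ref{ac-properties}\ref{restr}) yields the required extension $\hat f|\sigma$ with a constant $C_{\sigma_0}$ that is independent of $\sigma$. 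So the whole problem is to fill in one fixed rectangle.

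To do this I would first fix the geometric scaffold. Let $G$ be $\sigma_P$ with all of its windows filled in, a finite union of closed polygonal regions, and choose $\tau_S \in \APIC$ with $\sigma_S \subseteq \tau_S$ together with, by Lemma~\ref{split-projectable}, a polygonal mosaic $\calM_S = \{Q_k\}$ all of whose component curves are projectable and join two vertices of their polygon. Fix a rectangle $R_0$ containing $G \cup \bigcup_k Q_k$ in its interior; note $\sigma_0 \subseteq \intr(R_0)$. I would then overlay the boundaries of the polygonal regions of $\sigma_P$, of their windows, of the $Q_k$, and of $R_0$, and pass to a common refinement into finitely many convex polygonal cells $\{P_\alpha\}$ with pairwise disjoint interiors and $\bigcup_\alpha P_\alpha = R_0$, arranged so that (i) each $P_\alpha$ lies entirely inside a polygonal region of $\sigma_P$, inside a window, or in the exterior of $G$; and (ii) whenever a component curve $c_k$ meets $\intr(P_\alpha)$, the arc $c_k \cap P_\alpha$ is a projectable convex curve joining two vertices of $P_\alpha$. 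Property (ii) can be secured by subdividing each $Q_k$ exactly as in the proof of Lemma~\ref{add-line-spic}.

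Next I would fix the boundary and curve data. Let $\tilde f$ agree with $f$ at every vertex of the subdivision lying in $\sigma_0$, equal $0$ at the remaining vertices and along every edge of $\partial R_0$, and on each remaining edge $e$ (a line segment) and each component arc $c_k \cap P_\alpha$ extend the already-fixed endpoint values together with the known values on $\sigma_0 \cap e$ (respectively $\sigma_0 \cap c_k$) using Lemma~\ref{iota-lemma} and Theorem~\ref{conv-curve-extend}; on edges and arcs lying inside $\sigma_P$ this merely reproduces the restriction of $f$, so the construction is consistent and agrees with $f$ on $\sigma_0$. Each such one-dimensional extension costs only a bounded factor. I then fill each cell $P_\alpha$: if $P_\alpha \subseteq \sigma_P$ then $f$ is already defined on all of $P_\alpha$ and $f|P_\alpha \in AC(P_\alpha)$ by Theorem~\ref{ac-properties}\ref{restr}; if $P_\alpha$ lies in a window or the exterior and carries no curve, then $\tilde f$ is absolutely continuous on $\partial P_\alpha$ (Theorem~\ref{APIC-joining}) and Theorem~\ref{fill-poly} extends it into $P_\alpha$; and if $P_\alpha$ carries an arc $c_k$, then $\tilde f$ restricted to $(c_k \cap P_\alpha) \cup \partial P_\alpha$ is absolutely continuous by Theorem~\ref{APIC-joining} (this set being a single mosaic polygon with its inscribed curve), so Theorem~\ref{c-in-poly-thm} extends it into $P_\alpha$. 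Finally, the cells are closed polygonal regions with disjoint interiors and union $R_0$, so Theorem~\ref{patching-polygons} gives $\tilde f \in AC(R_0)$, and its norm is controlled by repeatedly applying Corollary~\ref{join-two-polys} (equivalently Theorem~\ref{add-triangle}) together with Theorem~\ref{APIC-join-bound}, the resulting constant depending only on the fixed decomposition of $R_0$, hence only on $\sigma_0$.

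The hard part will be the scaffold: producing a single polygonal subdivision of $R_0$ that is simultaneously compatible with the two-dimensional data (the regions and windows of $\sigma_P$) and with the one-dimensional data (the projectable inscribed curves of $\tau_S$), in particular arranging property (ii) wherever a curve crosses into a window or the exterior. The potential overlaps between $G$ and the mosaic polygons $Q_k$ are the main nuisance, but they create no conflict in the prescribed values of $\tilde f$, because $f$ is a single function on $\sigma_0$ and because inside any cell contained in $\sigma_P$ no extension is performed at all. Keeping the accumulated constant independent of $\sigma$ is then automatic, since all of the filling is carried out inside the fixed rectangle $R_0$ and the passage to $\sigma$ is made purely by extension by zero.
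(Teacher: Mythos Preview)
Your approach is correct but takes a genuinely different route from the paper. The paper's proof is modular: it surrounds $\sigma_P$ by a rectangle, lists the connected components $W_1,\dots,W_d$ of the complement $R\setminus(\sigma_P\cup\partial R)$, and observes that for each such hole the set $\partial W\cup(\sigma_S\cap W)$ lies in $\SPIC$ by finitely many applications of Lemma~\ref{add-line-spic}. It then invokes Theorem~\ref{spic-extend} as a black box to fill each $\cl(W)$ separately, and patches the pieces with Theorem~\ref{fill-in-pwp}. Your argument instead inlines the content of Theorem~\ref{spic-extend}: you build one global polygonal subdivision of a fixed $R_0$ simultaneously compatible with the two-dimensional data from $\sigma_P$ and the one-dimensional mosaic data from $\tau_S$, and then fill cell by cell with Theorems~\ref{fill-poly} and~\ref{c-in-poly-thm}. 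The paper's route is shorter and cleaner precisely because the hard work has already been packaged into Theorem~\ref{spic-extend}; your route is more self-contained and, as you note, makes the independence of the constant from $\sigma$ completely explicit (the paper's proof chooses $R\supseteq\sigma$ at the outset, which makes that independence slightly less transparent). The one place your write-up is thin is the scaffold step: securing property~(ii) across all the cells cut out by the boundaries of $\sigma_P$ genuinely requires iterating the subdivision procedure of Lemma~\ref{add-line-spic} once for each of the finitely many boundary edges, and you should say so rather than invoking that lemma once; this is routine but deserves a sentence.
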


\begin{proof}
Choose
a large rectangle $R$ which contains $\sigma$ in its interior. Let ${\hat \sigma}_P = \sigma_P \cup \partial R \in \PWP$. As in the proof of Theorem~\ref{spic-extend}, at the cost of a factor 5 in the norm, we can extend $f$ by setting it to be zero on  $\partial R$. 
Let $\mathcal{W} = \{W_1,\dots,W_d\}$ denote the connected components of $R \setminus {\hat \sigma}_P$. 

Fix $W \in \mathcal{W}$. Since the boundary of $W$ consists of a finite number of line segments, we can use Lemma~\ref{add-line-spic} to show that $\sigma_W = \partial W \cup (\sigma_S \cap W) \in \SPIC$.  Thus, by Theorem~\ref{spic-extend}, there is a constant $K_W$ and an extension $f_W$ of $f|\sigma_W$ to an absolutely continuous function on all of $\cl(W)$ with $\norm{f_W}_{BV(W)} \le K_W \norm{f}_{BV(\sigma_0)}$. 

Having done this for each $W \in \mathcal{W}$, setting
  \[ \hat f(\vecx) = \begin{cases} 
                           f(\vecx),   & \vecx \in \sigma_0, \\
                           f_W(\vecx),  & \vecx \notin \sigma_0,\ \vecx \in W \in \mathcal{W} 
                           \end{cases} \]
gives a function which is absolutely continuous on all of $R$ by Theorem~\ref{fill-in-pwp}. Finally we can restrict to $\sigma$ to obtain the required extension in $AC(\sigma)$. The bound follows from applying Corollary~\ref{join-two-polys} repeatedly.             
\end{proof}

Theorem~\ref{main-theorem} covers almost all the sets $\sigma_0$ for which it it is known that one can always find an absolutely continuous extension to a larger compact set. It is likely that the differentiability restriction on the convex curves considered can be relaxed. It is not too difficult, for example, to use the results of this paper to prove an extension result for a single convex curve consisting of an infinite collection of line segments.

%We do not know of any sets $\sigma_0$ for which one can always do an extension but which do not satisfy the hypotheses for the above theorem .

It would of course be of interest to know whether one can always extend an absolutely continuous function on a disk to larger sets.

\section{An application to operator theory}

The motivation for this work is to be able to show that if a Banach space operator $T$ is an $AC(\sigma)$ operator, then it is in fact an $AC(\sigma(T))$ operator. That is, it has an $AC(\sigma(T))$ functional calculus. 

Suppose then that $T$ is bounded operator on a Banach space $X$ with $\sigma(T) = \sigma_P \cup \sigma_S$ with $\sigma_P \in \PWP$ and $\sigma_S \in \SPIC$. Suppose further than $T$ has an $AC(\sigma)$ functional calculus, which implies that $\sigma(T) \subseteq \sigma$. That is, there is a Banach algebra homomorphism $\Psi: AC(\sigma) \to B(X)$ which extends the natural polynomial functional calculus.

By Theorem~\ref{main-theorem} there is a map $e: AC(\sigma(T)) \to AC(\sigma)$ such that $e(f)$ is an extension of $f$ to $\sigma$. Thus the map $\Phi: AC(\sigma(T)) \to B(X)$ given by $\Phi(f) = \Phi(e(f))$ is well-defined. It is of course not immediately clear that this map is a functional calculus map for $T$ since the map $e$ has not been shown to have any algebraic properties.

Suppose then that $f,g \in AC(\sigma(T))$. Then $e(f+g) - (e(f)+e(g))$ is identically zero on $\sigma(T)$. Theorem 3.1.6 of \cite{CF} ensures that the support of the functional calculus map $\Psi$ is $\sigma(T)$, which implies that 
  \[ 0 = \Psi\big(e(f+g)-(e(f)+e(g))\big) = \Psi(e(f+g)) - \Psi(e(f)) - \Psi(e(g)) \]
or $\Phi(f+g) = \Phi(f) + \Phi(g)$. A similar proof shows that $\Phi$ is multiplicative. Since $\norm{\Phi(f)} \le \norm{\Psi} \norm{e(f)}_{BV(\sigma)}  \le C_{\sigma(T)} \norm{\Psi} \norm{f}_{\BV(\sigma(T))}$ we now have that $\Phi$ is a Banach algebra homomorphism. Finally, let $\lambda(z) = z$ be the identity function on the complex plane. Since $e(\lambda) - \lambda$ is identically zero on $\sigma(T)$ we must have that $\Phi(\lambda) = \Psi(e(\lambda)) = \Psi(\lambda) = T$, and so $\Phi$ is an $AC(\sigma(T))$ functional calculus for $T$.

We note in particular that this conclusion holds whenever $\sigma(T) \subset \mR$ or $\sigma(T) \subseteq \mT$, which covers the cases of well-bounded and trigonometrically well-bounded operators.

The remaining challenge is to be able to remove the above hypotheses on the spectrum of $T$. 

\subsection*{Acknowledgements}
The work of the second author was supported by the Research Training Program of the Department of Education and Training of the Australian Government. 

\subsection*{Rights Retention Statement} This research was produced in whole or part by UNSW Sydney researchers and is subject to the UNSW Intellectual property policy. For the purposes of Open Access, the authors have applied a Creative Commons Attribution CC-BY licence to any Author Accepted Manuscript(AAM) version arising from this submission.

%%%%%%%%%%%%%%%%%%%%%%%%%%%%%%%%%%%%%%%%%%%%%%%%%%%%%%%%%%%%%%%%%%%%%%%
%
%   Bibliography
%
%%%%%%%%%%%%%%%%%%%%%%%%%%%%%%%%%%%%%%%%%%%%%%%%%%%%%%%%%%%%%%%%%%%%%%%
\bibliographystyle{amsalpha}

\end{document}